\documentclass[12pt,reqno]{amsart}
\usepackage{amsthm,amsfonts,amssymb,euscript}

\newcommand{\bea}{\begin{eqnarray}}
\newcommand{\eea}{\end{eqnarray}}
\def\beaa{\begin{eqnarray*}}
\def\eeaa{\end{eqnarray*}}
\def\ba{\begin{array}}
\def\ea{\end{array}}
\def\be#1{\begin{equation} \label{#1}}
\def \eeq{\end{equation}}

\def\be{{\beta}}

\def\eps{\epsilon}

\def\al{\alpha}

\def\R{{\mathbb{R}}}

\def\Z{{\mathbb{Z}}}

\def\T{{\mathbb{T}}}

\newtheorem{theorem}{Theorem}[section]
\newtheorem{lemma}[theorem]{Lemma}
\newtheorem{proposition}[theorem]{Proposition}

\newtheorem{definition}[theorem]{Definition}

\setlength{\textwidth}{16cm} \setlength{\oddsidemargin}{0cm}
\setlength{\evensidemargin}{0cm}
\numberwithin{equation}{section}

\begin{document}

\title[Energy-critical defocusing NLS]{Global well-posedness of the energy-critical defocusing NLS on $\mathbb{R}\times\mathbb{T}^3$}

\author{Alexandru D. Ionescu}
\address{Princeton University}
\email{aionescu@math.princeton.edu}

\author{Benoit Pausader}
\address{Brown University}
\email{benoit.pausader@math.brown.edu}

\thanks{The first author was supported in part by a Packard Fellowship.}

\begin{abstract}
We prove global well-posedness in $H^1$ for the energy-critical defocusing initial-value problem
\begin{equation*}
(i\partial_t+\Delta_x)u=u|u|^2,\qquad u(0)=\phi,
\end{equation*}
in the semiperiodic setting $x\in\R\times\T^3$.
\end{abstract}
\maketitle
\tableofcontents

\section{Introduction}\label{Intro}

Let $\mathbb{T}:=\R/(2\pi\mathbb{Z})$. In this paper we consider the energy-critical defocusing equation
\begin{equation}\label{eq1}
(i\partial_t+\Delta_x)u=u|u|^2
\end{equation}
in the semiperiodic setting $x\in\mathbb{R}\times\mathbb{T}^3$. Suitable solutions on a time interval $I$ of \eqref{eq1} satisfy mass and energy conservation, in the sense that the functions
\begin{equation}\label{conserve}
M(u)(t):=\int_{\R\times\T^3}|u(t)|^2\,dx,\qquad E(u)(t):=\frac{1}{2}\int_{\R\times\T^3}|\nabla u(t)|^2\,dx+\frac{1}{4}\int_{\R\times\T^3}|u(t)|^4\,dx,
\end{equation}
are constant on the interval $I$. Our main theorem concerns global well-posedness in $H^1(\R\times\T^3)$ for the initial-value problem associated to the equation \eqref{eq1}.

\begin{theorem}\label{Main1} (Main theorem) If $\phi\in H^1(\R\times\T^3)$ then there exists a unique global solution $u\in X^1(\R)$ of the initial-value problem
\begin{equation}\label{eq1.1}
(i\partial_t+\Delta)u=u|u|^2,\qquad u(0)=\phi.
\end{equation}
In addition, the mapping $\phi\to u$ extends to a continuous mapping from $H^1(\R\times\T^3)$ to $X^1([-T,T])$ for any $T\in[0,\infty)$, and the quantities $M(u)$ and $E(u)$ defined in \eqref{conserve} are conserved along the flow.
\end{theorem}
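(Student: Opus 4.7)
I would adopt the Kenig--Merle concentration-compactness/rigidity method, already carried out for energy-critical NLS on $\T^d$ in the authors' and others' previous work. First, develop the local theory in the $X^1$ spaces (built from atomic $U^2$ and variational $V^2$ spaces adapted to $e^{it\Delta}$): prove Strichartz and bilinear estimates on $\R\times\T^3$, then deduce local existence and uniqueness for $H^1$ data, small-data GWP, and a robust perturbation/stability lemma in $X^1$. The stability lemma gives continuity of the flow map, and conservation of $M(u)$ and $E(u)$ is obtained on smooth solutions by integration by parts and propagated to $H^1$ data by density. The theorem then reduces to an a priori bound of the form $\|u\|_{X^1([-T,T])} \le \Lambda(E(\phi), T)$, which together with local existence and energy conservation yields global existence.

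\textbf{Profile decomposition and nonlinear profiles.} To prove the a priori bound, argue by contradiction: let $E^\ast$ be the infimum of energies for which the bound fails on some finite interval. I would prove a linear profile decomposition for bounded sequences $\{\phi_n\}\subset H^1(\R\times\T^3)$ whose linear evolutions fail to vanish in a suitable Strichartz norm. Each $\phi_n$ should split into a sum of profiles of two geometric types, plus a small Strichartz remainder: \emph{Euclidean profiles} concentrating at a scale $N_n\to\infty$, which after the rescaling $(\tau,y)=(N_n^2(t-t_n), N_n(x-x_n))$ and an extension/restriction identifying $\R\times\T^3$ at small scales with $\R^4$ yield, in the limit, initial data for the energy-critical cubic defocusing NLS on $\R^4$; and \emph{scale-one profiles}, which are genuine $H^1(\R\times\T^3)$ data. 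Nonlinear profiles are built accordingly: Euclidean ones using the global scattering of Ryckman--Visan on $\R^4$ transported back to $\R\times\T^3$ with a cutoff at the scale of $\T^3$, scale-one ones using the induction hypothesis. Orthogonality in $X^1$ together with the stability lemma then forces a critical element $u_c$ at energy $E^\ast$ consisting of a single scale-one profile whose $H^1$-orbit is precompact modulo translations in the $\R$-direction.

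\textbf{Rigidity and main obstacle.} The last and most delicate step is to preclude $u_c$. Here the single Euclidean direction pays off: because $u_c(\cdot+x_1(t)e_1)$ has precompact orbit in $H^1$, an interaction-Morawetz-type estimate in the $x_1$ direction produces a finite spacetime bound on $\iint |u_c|^4\, dx\, dt$ that, combined with energy conservation and precompactness of the orbit, is compatible only with $u_c\equiv 0$, giving the desired contradiction. I expect the two main obstacles to be (i) setting up the linear profile decomposition so that Euclidean and scale-one profiles combine with sharp orthogonality in $X^1$, and transferring the Ryckman--Visan bounds cleanly from $\R^4$ to $\R\times\T^3$ via the extension/restriction procedure; and (ii) designing a Morawetz-type estimate on $\R\times\T^3$ that genuinely isolates the decay provided by the $\R$ factor while carefully controlling the contribution of the compact $\T^3$ directions, since the presence of zero modes in $\T^3$ could otherwise spoil the argument. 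Both are technical but follow the blueprint developed in the earlier work on $\T^d$.
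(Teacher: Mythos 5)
Your overall architecture — critical local theory in the $U^2/V^2$-based spaces, linear profile decomposition into Euclidean and scale-one profiles, transfer of the Ryckman--Visan global result to the Euclidean profiles via rescaling, extraction of a minimal counterexample with precompact orbit modulo $\R$-translations — matches the paper closely. The place where you part ways with the actual proof, and where your plan has a genuine gap, is the last (``rigidity'') step.

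You propose to preclude the critical element $u_c$ via an interaction-Morawetz estimate in the $x_1$ direction yielding a finite bound on $\iint |u_c|^4\,dx\,dt$. This cannot work as stated: such a spacetime bound is a scattering-type statement, and the paper points out explicitly that scattering fails for \eqref{eq1.1} on $\R\times\T^3$ (the equation on $\R$ inherited by $x'$-independent data only exhibits modified scattering). Any Morawetz-type estimate coming only from the one noncompact direction controls a weighted quantity, not the full $L^4_{x,t}$ norm, and the $\T^3$ directions contribute no decay. More importantly, the paper never needs a monotonicity formula. The a priori bound being proved is not $\|u\|_{X^1([-T,T])}\lesssim_T 1$ but a bound on the fungible $Z$-norm over intervals of length at most $1$, with an induction on the conserved quantity $L=M+E$. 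Because the minimal element $U$ (Step I of the proof of Theorem \ref{LmaxThem}) then must blow up in $Z$ at a \emph{finite} time $T^+\le 1$, precompactness of $U(\cdot-x_k,t_k)$ for $t_k\uparrow T^+$ (another application of the Palais--Smale Proposition \ref{KeyProp}) combined with local well-posedness and the stability Proposition \ref{Stabprop} gives a uniform bound $\|U\|_{Z(t_k,T^+)}\le 2$ for $k$ large, directly contradicting blow-up. No virial or Morawetz identity is used anywhere; the rigidity is handled entirely by compactness plus the fungibility of $Z$ on short intervals. If you insist on a scattering-type a priori bound, your plan stalls at the rigidity step; if you instead adopt the time-local $Z$-norm formulation, the Morawetz machinery becomes unnecessary.

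There is a second, independent gap: your proposal does not address what the paper regards as the genuinely hard parts of the argument. These are (a) the semiperiodic Strichartz estimate of Proposition \ref{Stric2}, proved by the circle method at the exponent threshold $p>18/5$, which is what makes even the small-data theory available on $\R\times\T^3$; and (b) the control of the error terms in the approximate solution built from the nonlinear profiles (Lemmas \ref{lemm1} and \ref{lemm2}), which requires the local smoothing estimate of Lemma \ref{locsmo} exploiting the dispersive $\R$-direction and a careful decomposition adapted to the operator $\widetilde{P}^1_\delta$. This is where the presence of trapped geodesics bites, and it is not reducible to ``follow the $\T^d$ blueprint,'' since on $\T^3$ (see \cite{IoPa}) the authors had to use a different error-control strategy. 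Your proposal would need to say how high-frequency Euclidean profiles and low-frequency/scale-one pieces decouple in $N(I)$, which is precisely the content of \eqref{Deco10}--\eqref{Deco11}.
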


The uniqueness spaces $X^1(I)\subseteq C(I:H^1(\R\times\T^3))$ in the theorem above are defined precisely in section \ref{prelim}. These spaces were already defined and used by Herr--Tataru--Tzvetkov \cite{HeTaTz} and \cite{HeTaTz2}.

The large-data theory of critical and subcritical semilinear Schr\"{o}dinger equations is well understood in Euclidean spaces, at least in the defocusing case, see for example \cite{Cazenave:book,Tao:book,B,G,CKSTTcrit,RV,V}. See also \cite{Gr1,ShSt1} for previous results about the wave equation. Our proof of Theorem \ref{Main1} uses, as a black box, the global well-posedness and scattering of nonlinear solutions with bounded energy in the Euclidean case, the main theorem in \cite{RV}.

On the other hand, the theory of Schr\"odinger equations on general compact manifolds and spaces with smaller volume is much less understood, even in the defocusing subcritical case, see for example \cite{Bo1,BuGeTz,BuGeTz2,GePi,He,HeTaTz,HeTaTz2,IoPa,TaTz}. With the exception of the recent theorem in \cite{IoPa}, the results in these papers do not address the case of large-data critical problems.

Theorems similar to Theorem \ref{Main1} can be proved by the same method for the energy-critical defocusing NLS on $\R^2\times\T^2$, $\R^3\times\T$, $\R\times\T^2$, $\R^2\times\T$. In some of these cases\footnote{These cases are easier at the level of small-data results, since the necessary Strichartz estimates can be derived by counting arguments, as in \cite{HeTaTz} and \cite{HeTaTz2}. In our case, the proof of the Strichartz estimates in Proposition \ref{Stric2} seems to require the circle method and some of the bounds in \cite{Bo2}. In our case even the small-data theory is new.} it is possible to also derive global asymptotic information about the solution, i.e. prove scattering. We refer to \cite{TzVi} for an example of such result, in the case of small data. In our case, scattering cannot hold for the energy-critical equation on $\mathbb{R}\times\mathbb{T}^3$. In fact scattering fails even for the cubic equation on $\mathbb{R}$ (which corresponds to solutions that do not depend on the periodic variables for the cubic NLS on $\R\times\T^3$), where one expects modified scattering instead, see \cite{Oz}.

On the other hand, the energy-critical problem appears more complicated in dimensions $d\geq 5$, at least if one considers solutions on $\R\times\T^{d-1}$, due to the lower power in the nonlinearity and the relative weakness of the available scale-invariant Strichartz estimates. The problem also appears more difficult in the purely periodic case $\T^d$, $d\geq 4$; in this case even the small data theory is not known. The purely periodic problem on $\T^3$ was recently solved by the authors, see \cite{IoPa}. In general, the difficulty of the critical NLS problem on $\R^m\times\T^{n}$ increases if the dimension $m+n$ is increased or if the number $m$ of copies of $\R$ is decreased.

In a previous work on the hyperbolic space $\mathbb{H}^3$ \cite{IoPaSt}, the authors and G. Staffilani could isolate concentration of energy at a point as the only obstruction to global existence and could use Euclidean results from \cite{CKSTTcrit} to prevent this phenomenon. Here we consider another potential difficulty caused by the geometry, namely the presence of trapped geodesics. The significance of our result is that, as long as there is a dispersive direction, global wellposedness still prevails, even in the case of large data in 4 dimensions.

We summarize below the main ingredients in the proof of Theorem \ref{Main1}:
\begin{enumerate}
\item Critical stability theory. In section \ref{localwp} we develop a suitable large-data local well-posedness and stability theory of the nonlinear Schr\"{o}dinger equation \eqref{eq1.1}. This relies on the new Strichartz estimates on $\R\times\T^3$ in Proposition \ref{Stric2} and the critical theory developed in \cite{HeTaTz} and \cite{HeTaTz2}.

\item Profile decompositions. In section \ref{profiles} we construct certain profiles adapted to our geometry, and show how to use these profiles to decompose a bounded sequence of functions $f_k\in H^1(\R\times\T^3)$. In other words, we prove the analogue of Keraani's theorem \cite{Ker} in $\R\times\T^3$. A similar construction was used recently by the authors in \cite{IoPaSt}.

\item Compactness argument. In section \ref{proofthm} we use induction on energy and mass and a compactness argument to prove Theorem \ref{Main1}.
\end{enumerate}
This is essentially the standard framework in which large-data critical dispersive problems have been analyzed in recent years, starting with the work of Kenig--Merle \cite{KeMe}.

There are, however, significant difficulties in our semiperiodic setting, mainly caused by the presence of a large set of trapped geodesics.\footnote{ This difficulty is specific to Schr\"{o}dinger evolutions, for which very high frequency solutions propagate at very high speeds. } On one hand, the family of scale-invariant Strichartz estimates is more limited in our setting than in Euclidean spaces. This leads to difficulties in the small-data case, which have been recently resolved by Herr--Tataru--Tzvetkov \cite{HeTaTz}, \cite{HeTaTz2} using refined function spaces. On the other hand, some high frequency solutions can spend a significant (and unrelated to their frequency) amount of time in a small region, thereby interacting for a long time with middle frequency solutions, as is manifested by looking at linear solutions of the form
\begin{equation}\label{solu}
u(x,t)=e^{it\Delta}[\chi(x_1)e^{ik_0x'}],\qquad k_0\gg 1.
\end{equation}
This complicates significantly the large-data theory, in particular the construction of approximate solutions of the nonlinear flow, since local smoothing type estimates clearly fail for such solutions.

These considerations motivate our choice of working in $\R\times\T^3$. In this semiperiodic setting, the set of trapped geodesics is included in a codimension $1$ subbundle of the tangent space. Because of this we can still recover some amount of dispersion in the nonperiodic direction, which is the main ingredient in some of the key estimates such as Lemma \ref{locsmo} and Lemma \ref{step2}. At the same time, ``bad'' solutions such as those in \eqref{solu} are sufficiently far from saturating the Sobolev inequality,\footnote{This is relevant because of the ``critical'' nature of our equation.} which ultimately allows us to discard the effect of their interactions.

The strategy used here differs from the one used by the authors for $\mathbb{T}^3$ in \cite{IoPa} in that it gives a more precise construction of the approximate solution and the control on the error relies on the fact that the a priori problematic interactions do not modify much the location of the spectrum.

\section{Preliminaries}\label{prelim}In this section we summarize our notations and collect several lemmas that are used in the rest of the paper.

We define the Fourier transform on $\mathbb{R}\times\mathbb{T}^3$ as follows
\begin{equation*}
\left(\mathcal{F}f\right)(\xi):=\int_{\mathbb{R}\times\mathbb{T}^3}f(x)e^{-ix\cdot\xi}dx,
\end{equation*}
where $\xi=(\xi_1,\xi_2,\xi_3,\xi_4)\in\mathbb{R}\times\mathbb{Z}^3$. We also note the Fourier inversion formula
\begin{equation*}
f(x)=c\sum_{(\xi_2,\xi_3,\xi_4)\in\mathbb{Z}^3}\int_{\xi_1\in\mathbb{R}}\left(\mathcal{F}f\right)(\xi)e^{ix\cdot\xi}d\xi_1.
\end{equation*}
We define the Schr\"odinger propagator $e^{it\Delta}$ by
\begin{equation*}
\left(\mathcal{F}e^{it\Delta}f\right)(\xi):=e^{-it|\xi|^2}\left(\mathcal{F}f\right)(\xi).
\end{equation*}

We now define the Littlewood-Paley projections. We fix $\eta^{1}:\mathbb{R}\to[0,1]$ a smooth even function with $\eta^1(y)=1$ if $|y|\leq 1$ and $\eta^1(y)=0$ if $|y|\geq 2$. Let $\eta^4:\mathbb{R}^4\to[0,1]$, $\eta^4(\xi):=\eta^1(\xi_1)^2\eta^1(\xi_2)^2\eta^1(\xi_3)^2\eta^1(\xi_4)^2$. We define the Littlewood-Paley projectors $P_{\le N}$ and $P_N$ for $N=2^j\ge 1$ a dyadic integer by
\begin{equation*}
\begin{split}
\mathcal{F}\left(P_{\le N}f\right)\left(\xi\right)&:=\eta^4(\xi/N)\left(\mathcal{F}f\right)(\xi),\qquad\xi\in \R\times\Z^3,\\
P_1f&:=P_{\le 1}f,\\
P_Nf&:=P_{\le N}f-P_{\le N/2}f\quad\hbox{if}\quad N\ge 2.
\end{split}
\end{equation*}
For any $a\in (0,\infty)$ we define
\begin{equation*}
P_{\leq a}:=\sum_{N\in 2^{\Z_+},\,N\leq a}P_N,\qquad P_{> a}:=\sum_{N\in 2^{\Z_+},\,N>a}P_N.
\end{equation*}
We will also consider frequency projections on cubes. For $C=[-\frac{1}{2},\frac{1}{2})^4$ and $z\in\mathbb{Z}^4$, we define the (sharp) projection on $C_z=z+C$, $P_{C_z}$ by
\begin{equation*}
\left(\mathcal{F}P_{C_z}f\right)(\xi):=\mathbf{1}_{C_z}(\xi)\left(\mathcal{F}f\right)(\xi)
\end{equation*}
for $\mathbf{1}_{C_z}$ the characteristic function of $C_z$.

{\bf{Function spaces.}} The strong spaces are the same as the one used by Herr-Tataru-Tzvetkov \cite{HeTaTz,HeTaTz2}. Namely
\begin{equation*}
\begin{split}
\Vert u\Vert_{X^s(\mathbb{R})}&:=\left(\sum_{z\in\mathbb{Z}^4}\langle z\rangle^{2s}\Vert P_{C_z}u\Vert_{U^2_\Delta(\mathbb{R},L^2)}^2\right)^\frac{1}{2},\\
\Vert u\Vert_{Y^s(\mathbb{R})}&:=\left(\sum_{z\in\mathbb{Z}^4}\langle z\rangle^{2s}\Vert P_{C_z}u\Vert_{V^2_\Delta(\mathbb{R},L^2)}^2\right)^\frac{1}{2},
\end{split}
\end{equation*}
where we refer to \cite{HeTaTz,HeTaTz2} for a description of the spaces $U^p_\Delta$ and $V^p_\Delta$ and of their properties. Note in particular that
\begin{equation*}
X^1(\R)\hookrightarrow Y^1(\R)\hookrightarrow L^\infty(\mathbb{R},H^1).
\end{equation*}

For intervals $I\subset\mathbb{R}$, we define $X^1(I)$ in the usual way as restriction norms, thus
\begin{equation*}
X^1(I):=\{u\in C(I:H^1):\Vert u\Vert_{X^1(I)}:=\sup_{J\subseteq I,\,|J|\leq 1}[\inf\{\Vert v\Vert_{X^1(\mathbb{R})}:v_{\vert J}=u\}]<\infty\}.
\end{equation*}
The norm controling the inhomogeneous term on an interval $I=(a,b)$ is then defined as
\begin{equation}\label{NNorm}
\Vert h\Vert_{N(I)}:=\Big \|\int_a^te^{i(t-s)\Delta}h(s)ds\Big\|_{X^1(I)}.
\end{equation}

We also need a weaker norm
\begin{equation*}
\begin{split}
\Vert u\Vert_{Z(I)}=\sup_{J\subseteq I,\vert J\vert\le 1}\Vert(\sum_NN^2\Vert P_Nu(t)\Vert_{L^4(\mathbb{R}\times\mathbb{T}^3)}^4)^\frac{1}{4}\Vert_{L^4_t(J)}.
\end{split}
\end{equation*}
A consequence of Strichartz estimates from Proposition \ref{Stric2} below is that
\begin{equation*}
\Vert u\Vert_{Z(I)}\lesssim \Vert u\Vert_{X^1(I)},
\end{equation*}
thus $Z$ is indeed a weaker norm. The purpose of this norm is that it is fungible and still controls the global evolution, as will be manifest from the local theory in Section \ref{localwp}.

{\bf Definition of solutions}.
Given an interval $I\subseteq\R$, we call $u\in C(I:H^1(\mathbb{R}\times\mathbb{T}^3))$ a strong solution of \eqref{eq1} if $u\in X^1(I)$ and $u$ satisfies that for all $t,s\in I$,
\begin{equation*}
u(t)=e^{i(t-s)\Delta}u(s)-i\int_s^te^{i(t-t^\prime)\Delta}\left(u(t^\prime)\vert u(t^\prime)\vert^2\right)dt^\prime.
\end{equation*}

{\bf{Dispersive estimates on $\R\times\T^3$.}} We first recall the dispersion estimate in $\mathbb{R}^d$
\begin{equation*}
\Vert e^{it\Delta_{\mathbb{R}^d}}f\Vert_{L^\infty(\mathbb{R}^d)}\lesssim \vert t\vert^{-\frac{d}{2}}\Vert f\Vert_{L^1(\mathbb{R}^d)}.
\end{equation*}
Using also the unitarity of $e^{it\Delta_{\mathbb{R}^d}}$ on $L^2(\mathbb{R}^d)$ one can obtain the Euclidean Strichartz estimates. However, in our context, such estimates do not hold. Instead, we obtain the following:

\begin{proposition}\label{Stric2}
(Strichartz estimates on $\R\times\mathbb{T}^3$) Let
\begin{equation}\label{no0}
p_1:=18/5.
\end{equation}
Then, for any $p> p_1$, $N\geq 1$, and $f\in L^2(\R\times\T^3)$
\begin{equation}\label{no0.001}
\|e^{it\Delta}P_Nf\|_{L^p(\R\times\mathbb{T}^3\times[-1,1])}\lesssim_p N^{2-6/p}\|f\|_{L^2(\R\times\mathbb{T}^3)}.
\end{equation}
\end{proposition}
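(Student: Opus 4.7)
The strategy is a $TT^\ast$ argument combined with pointwise dispersive bounds on the Schr\"{o}dinger kernel, exploiting the fact that on $\R\times\T^3$ one factor of the propagator, $e^{it\partial^2_{x_1}}$, still enjoys full Euclidean dispersion, while the periodic factor $e^{it\Delta_{\T^3}}$ is controlled by Bourgain's arithmetic analysis from \cite{Bo2}.

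First, by a $TT^\ast$ duality argument, the estimate \eqref{no0.001} is equivalent to the convolution bound
\begin{equation*}
\|K_N\ast g\|_{L^p([-1,1]\times\R\times\T^3)}\lesssim_p N^{4-12/p}\|g\|_{L^{p'}},
\end{equation*}
where $K_N(t,x):=\mathcal{F}^{-1}[\eta^4(\xi/N)^2 e^{-it|\xi|^2}](x)$ is the Schr\"{o}dinger kernel localized at frequency $N$. Separability of the Laplacian factorizes this kernel as $K_N(t,x_1,x')=K_N^{\R}(t,x_1)\,K_N^{\T^3}(t,x')$, where each factor is the corresponding frequency-localized Schr\"{o}dinger kernel on the respective component.

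For $K_N^{\R}$ I would use the classical $1$D dispersive bound $|K_N^{\R}(t,x_1)|\lesssim\min(N,|t|^{-1/2})$. For $K_N^{\T^3}$, I would invoke the Weyl-type bounds of \cite{Bo2}: whenever $t\in[-1,1]$ admits a Dirichlet approximation $t=a/q+\beta$ with $(a,q)=1$, $1\leq q\leq N$, and $|\beta|\leq 1/(qN)$, one has
\begin{equation*}
|K_N^{\T^3}(t,x')|\lesssim\frac{N^3}{q^{3/2}(1+N|\beta|^{1/2})^{3}}.
\end{equation*}
This is the classical decomposition of the torus Schr\"{o}dinger kernel into a Gauss-sum (arithmetic) factor and a Fresnel (analytic) factor.

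With the pointwise bounds in hand, I would apply Young's inequality in the spatial variable together with a Hardy-Littlewood-Sobolev estimate in time, decomposing $[-1,1]$ along the Farey arcs and summing the major-arc and minor-arc contributions separately. The gain from the Euclidean factor $K_N^{\R}$ upgrades the pure $\T^3$-Strichartz loss $N^{3/2-5/p}$ (which is valid for $p>10/3$ by \cite{Bo2}) to the $4$D Bernstein--Strichartz loss $N^{2-6/p}$, at the cost of working in the more restrictive range $p>18/5$. The main obstacle is the principal major arc $q=1$, $|\beta|\lesssim N^{-2}$: here $K_N^{\T^3}\sim N^3$ and $K_N^{\R}$ saturates at $\sim N$, producing a kernel spike comparable to the $\R^4$ Euclidean kernel, and the Euclidean decay $|t|^{-1/2}$ only becomes effective at the scale $|t|\gtrsim N^{-2}$. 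Optimizing the interpolation exponents so as to balance this spike against the minor-arc contributions and the summation over all denominators $q\leq N$ pins down exactly the threshold $p_1=18/5$.
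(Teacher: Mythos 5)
Your proposal captures the general architecture of the paper's proof at the level of ingredients — the $TT^\ast$ duality, the factorization $K_N = K_N^{\R}\cdot K_N^{\T^3}$, the Weyl/Gauss-sum bound for $K_N^{\T^3}$ from \cite{Bo2}, and the one-dimensional dispersive bound for $K_N^{\R}$ — and your heuristic for why the $\R$-factor should improve the exponent is the right intuition. However, the plan to close the argument via ``Young's inequality in space together with HLS in time, decomposing along Farey arcs'' is where the proposal genuinely breaks down, and for a structural (not merely technical) reason.

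Young's inequality $\|K_N\ast g\|_{L^p}\leq\|K_N\|_{L^{p/2}}\|g\|_{L^{p'}}$ is far too lossy here: already at $p=4$ one has $\|K_N\|_{L^2_{x,t}}\approx N^2$ by Plancherel, whereas the target is $N^{4-12/p}=N$; and for $p<4$ the required $L^{p/2}$ norm of the kernel with $p/2<2$ is even worse behaved. HLS in time requires a power-law decay $|t-s|^{-\alpha}$, which $K_N$ does not satisfy — its size at time $t$ depends on the Diophantine approximation of $t$, not merely on $|t|$ — and applying HLS arc-by-arc and then summing over $q$ does not recover the loss, because the pointwise bound \eqref{no4} does not see the arithmetic cancellation hidden in the Gauss sums. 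What the paper actually does (following \cite[Section 3]{Bo2}) is \emph{(a)} recast \eqref{no0.001} as the level-set estimate of Lemma \ref{Distr2}, $|S_\lambda|\lesssim N^{2p_0-6}\lambda^{-p_0}$, and \emph{(b)} prove it by decomposing $K_N=K_N^{1,\lambda}+K_N^{2,\lambda}+K_N^{3,\lambda}$, where $K_N^{1,\lambda}$ is controlled pointwise by $\lambda^2/2$ in physical space using \eqref{no73}, while $K_N^{2,\lambda}$ and $K_N^{3,\lambda}$ are controlled on the \emph{Fourier} side in $L^\infty$ and $L^r$ respectively. The $L^r$ Fourier estimate on $K_N^{3,\lambda}$ is exactly where the exponent $18/5$ is pinned down, and it relies essentially on the divisor-count bound $|\{m\leq P: d(m,Q)\geq D\}|\lesssim D^{-B}Q^\gamma P + Q^B$ from \cite[Lemma 3.47]{Bo2} (stated as \eqref{divisors} in the paper). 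This Fourier-side, level-set, divisor-counting mechanism is the indispensable idea that your proposal omits; without it, the pointwise kernel bounds and convolution inequalities alone do not reach any $p<4$, let alone the threshold $p_1=18/5$.
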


We prove this estimate in section \ref{lastsection}, using the circle method as in \cite{Bo2}. The scale-invariant inequality \eqref{no0.001} clearly holds for $p=\infty$; as explained in \cite{HeTaTz2}, it is important to prove this inequality for some exponent $p<4$, which is consistent with the restriction \eqref{no0}.

In addition, we will need the following dispersive bound, which is a consequence of the estimate \eqref{no4}: if $t\in\R$ and $N\geq 1$ then
\begin{equation}\label{Stric3}
\|e^{it\Delta}P_N f\|_{L^\infty(\R\times\T^3)}\lesssim N^3|t|^{-1/2}\|f\|_{L^1(\R\times\T^3)}.
\end{equation}

From Proposition \ref{Stric2}, we deduce the following lemma
\begin{lemma}
Let $p>p_1$, then, for any cube $C$ of size $N$, there holds that
\begin{equation}\label{UpEst}
\Vert P_Cu\Vert_{L^p(\mathbb{R}\times\mathbb{T}^3\times[-1,1])}\lesssim N^{2-\frac{6}{p}}\Vert P_C u\Vert_{U^p_\Delta(\mathbb{R},L^2)}.
\end{equation}
\end{lemma}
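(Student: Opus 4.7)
The plan is to deduce \eqref{UpEst} by combining Proposition \ref{Stric2} with the standard transfer principle for $U^p_\Delta$-spaces, after a Galilean reduction that recenters the frequency cube $C$ near the origin. Proposition \ref{Stric2} already provides the desired bound for free Schr\"odinger solutions with data localized to a dyadic ball centered at $0$; what remains is to extend this to arbitrary cubes in frequency space and to promote the $L^2$-data norm on the right-hand side to a $U^p_\Delta$-norm.

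For the Galilean step, I would pick a lattice point $\xi_0\in\R\times\Z^3$ at distance $O(1)$ from the center of $C$, so that $C-\xi_0$ is a cube of side $N$ contained in a Euclidean ball of radius $\lesssim N$ about the origin, hence included in the Fourier support of $P_{\leq C_0 N}$ for an absolute constant $C_0$. For $\phi$ with $\widehat{\phi}$ supported in $C$, set $\widetilde{\phi}:=e^{-i\xi_0\cdot x}\phi$; the standard identity
\[
e^{it\Delta}\phi(x)=e^{i\xi_0\cdot x-it|\xi_0|^2}(e^{it\Delta}\widetilde{\phi})(x+2t\xi_0)
\]
makes sense on $\R\times\T^3$ because arbitrary real translations are well defined on $\T=\R/2\pi\Z$. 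Taking $L^p_{t,x}$-norms removes the phase and the translation, so $\|e^{it\Delta}\phi\|_{L^p}=\|e^{it\Delta}\widetilde{\phi}\|_{L^p}$. Writing $\widetilde{\phi}=\sum_{M\leq C_0N}P_M\widetilde{\phi}$ dyadically, applying Proposition \ref{Stric2} to each summand, and summing via Cauchy--Schwarz (using $2-6/p>0$, valid since $p>p_1=18/5$), I obtain
\[
\|e^{it\Delta}P_Cf\|_{L^p(\R\times\T^3\times[-1,1])}\lesssim N^{2-6/p}\|P_Cf\|_{L^2}.
\]

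For the transfer step, recall that a $U^p_\Delta$-atom is of the form $a(t)=\sum_k\mathbf{1}_{[t_k,t_{k+1})}(t)\,e^{it\Delta}\psi_k$ with $\sum_k\|\psi_k\|_{L^2}^p\leq 1$. Since $P_C$ commutes with $e^{it\Delta}$ and the summands of $P_C a$ are disjoint in time, intersecting each interval with $[-1,1]$ and applying the previous display on each piece gives
\[
\|P_C a\|_{L^p(\R\times\T^3\times[-1,1])}^p\lesssim N^{p(2-6/p)}\sum_k\|P_C\psi_k\|_{L^2}^p\lesssim N^{p(2-6/p)},
\]
hence $\|P_C a\|_{L^p}\lesssim N^{2-6/p}$ uniformly over atoms. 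By the $\ell^1$-atomic definition of $\|\cdot\|_{U^p_\Delta}$ this extends to $\|P_C u\|_{L^p}\lesssim N^{2-6/p}\|u\|_{U^p_\Delta}$ for every $u\in U^p_\Delta$; applying it to $P_C u$ in place of $u$ and using $P_C^2=P_C$ yields \eqref{UpEst}. The only point demanding care is the Galilean reduction on the semiperiodic background, but it is essentially free here because $\xi_0$ can be chosen in $\R\times\Z^3$ (so $e^{i\xi_0\cdot x}$ is well defined) and arbitrary real spatial shifts are harmless on $\T$; everything else is routine use of Proposition \ref{Stric2} and the $U^p$-machinery of \cite{HeTaTz,HeTaTz2}.
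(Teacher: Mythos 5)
Your proof is correct and follows the standard route (Galilean recentering of the cube to the origin followed by dyadic summation of Proposition \ref{Stric2} and the $U^p_\Delta$ transfer principle), which is exactly what the paper has in mind when it asserts the lemma as a consequence of Proposition \ref{Stric2} and the machinery of \cite{HeTaTz,HeTaTz2}. The only blemish is a sign in the Galilean identity (it should be $(e^{it\Delta}\widetilde{\phi})(x-2t\xi_0)$ with the paper's convention $\mathcal{F}(e^{it\Delta}f)=e^{-it|\xi|^2}\mathcal{F}f$), but this is irrelevant since you only use translation invariance of $L^p_x$.
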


{\bf{Additional estimates.}} We will need the following two Sobolev-type estimates:

\begin{lemma}\label{precsob}
For $f\in H^1(\mathbb{R}\times\mathbb{T}^3)$, there holds that
\begin{equation}\label{PrecSob}
\Vert f\Vert_{L^4(\mathbb{R}\times\mathbb{T}^3)}\lesssim \left(\sup_N N^{-1}\Vert P_Nf\Vert_{L^\infty(\mathbb{R}\times\mathbb{T}^3)}\right)^\frac{1}{2}\|f\|_{H^1(\R\times\T^3)}^\frac{1}{2}.
\end{equation}
\end{lemma}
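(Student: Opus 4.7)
The plan is to deduce \eqref{PrecSob} from the standard Littlewood--Paley square function inequality on $\R\times\T^3$. Set $A:=\sup_N N^{-1}\|P_Nf\|_{L^\infty(\R\times\T^3)}$; the desired bound is equivalent to $\|f\|_{L^4}^4\lesssim A^2\|f\|_{H^1}^2$.

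The first step is to invoke the Littlewood--Paley inequality
$$
\|f\|_{L^4}^2\lesssim\Bigl\|\Bigl(\sum_{N}|P_Nf|^2\Bigr)^{1/2}\Bigr\|_{L^4}^2=\Bigl\|\sum_{N}|P_Nf|^2\Bigr\|_{L^2},
$$
which holds on $\R\times\T^3$ by the vector-valued Mikhlin multiplier theorem applied to the dyadic multipliers $\eta^4(\xi/N)-\eta^4(2\xi/N)$; these satisfy the usual symbol estimates uniformly in $N$, and $L^p$ boundedness lifts from $\R^4$ to $\R\times\T^3$ by periodization. This reduces the problem to controlling $\bigl\|\sum_N|P_Nf|^2\bigr\|_{L^2}^2$.

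Next, expand the square and exploit symmetry:
$$
\Bigl\|\sum_N|P_Nf|^2\Bigr\|_{L^2}^2=\sum_{N_1,N_2}\int_{\R\times\T^3}|P_{N_1}f|^2|P_{N_2}f|^2\,dx\leq 2\sum_{N_1\leq N_2}\|P_{N_1}f\|_{L^\infty}^2\|P_{N_2}f\|_{L^2}^2,
$$
where the inner integral was estimated by H\"older with $L^\infty$ on the lower-frequency factor. The definition of $A$ gives $\|P_{N_1}f\|_{L^\infty}\leq AN_1$, and the dyadic collapse $\sum_{N_1\leq N_2}N_1^2\lesssim N_2^2$ yields
$$
\sum_{N_1\leq N_2}A^2N_1^2\|P_{N_2}f\|_{L^2}^2\lesssim A^2\sum_{N_2}N_2^2\|P_{N_2}f\|_{L^2}^2\lesssim A^2\|f\|_{H^1}^2.
$$
Chaining the estimates completes the proof.

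The only nontrivial input is the Littlewood--Paley square function inequality on $\R\times\T^3$; everything else is elementary H\"older/Cauchy--Schwarz and dyadic summation. I deliberately avoid a direct paraproduct decomposition of $|f|^2$: while the low-high part can be handled by almost orthogonality in $L^2$, the high-high part produces divergent unweighted dyadic sums of the form $\sum_N N\|P_Nf\|_{L^2}$, and it is precisely the square function inequality that encodes enough orthogonality to absorb these contributions automatically.
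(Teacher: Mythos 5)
Your proof is correct, and it takes a genuinely different route from the paper's. The paper works directly with $\|f\|_{L^4}^4$, expanding the quartic integral dyadically and using the Fourier-support constraint that the two highest of the four frequencies must be comparable (otherwise the oscillatory integral vanishes); the two lowest-frequency factors are then placed in $L^\infty$ and the two highest in $L^2$, after which Cauchy--Schwarz in the comparable top frequencies gives $A^2\sum_N N^2\|P_Nf\|_{L^2}^2\lesssim A^2\|f\|_{H^1}^2$. You instead invoke the Littlewood--Paley square function inequality $\|f\|_{L^4}\lesssim\|(\sum_N|P_Nf|^2)^{1/2}\|_{L^4}$ as a black box, which reduces matters to $\|\sum_N|P_Nf|^2\|_{L^2}^2$, a manifestly positive bilinear sum that is then handled by H\"older and dyadic summation without any support analysis. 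Both are sound; the paper's argument is more elementary (self-contained, no external multiplier input), while yours is slightly shorter at the cost of importing a nontrivial tool whose transference to $\R\times\T^3$, though standard, does deserve the brief justification you give. Your closing remark about the paraproduct pitfall is apt and correctly diagnoses why the paper expands the full quartic product rather than $|f|^2$: the two smallest dyadic frequencies $N_3N_4\lesssim N_2^2$ then serve as weights that make the sum converge, which a naive high-high decomposition of $|f|^2$ does not provide.
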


\begin{proof}[Proof of Lemma \ref{precsob}] Decomposing into different frequencies, ordering these frequencies, remarking that the two highest frequencies have to be comparable and estimating the two low-frequency terms in $L^\infty$ and the two high frequency terms in $L^2$, we get,
\begin{equation*}
\begin{split}
\Vert f\Vert_{L^4}^4&\lesssim\sum_{N_1\sim N_2\ge N_3\ge N_4}\int_{\mathbb{R}\times\mathbb{T}^3}\vert P_{N_1}fP_{N_2}fP_{N_3}fP_{N_4}f\vert dx\\
&\lesssim \left(\sup_N N^{-1}\Vert P_Nf\Vert_{L^\infty}\right)^2\sum_{N_1\sim N_2\ge N_3\ge N_4}N_3N_4\Vert P_{N_1}f\Vert_{L^2}\Vert P_{N_2}f\Vert_{L^2}\\
&\lesssim\left(\sup_N N^{-1}\Vert P_Nf\Vert_{L^\infty}\right)^2\sum_{N_1\sim N_2}N_1^2\Vert P_{N_1}f\Vert_{L^2}\Vert P_{N_2}f\Vert_{L^2}
\end{split}
\end{equation*}
which gives \eqref{PrecSob}.
\end{proof}

Given $\delta\in(0,1)$ we define the operator $\widetilde{P}_\delta^1$ on $L^2(\R\times\T^3)$,
\begin{equation}\label{sobop}
\mathcal{F}(\widetilde{P}_\delta^1 f)(\xi):=\sum_{N\ge 1}\mathcal{F}(P_Nf)(\xi)\cdot(1-\eta^1)(\xi_1/(\delta N)).
\end{equation}
Also, given a function $f:\R\times\T^3\times I\to\mathbb{C}$ we define
\begin{equation}\label{grad}
|\nabla^1 f|(x,t):=|f(x,t)|+\sum_{j=1}^4|\partial_jf(x,t)|.
\end{equation}

\begin{lemma}\label{locsmo}
Assume $\psi\in H^1(\R\times\T^3)$ satisfies
\begin{equation}\label{ml1}
\|\psi\|_{H^1(\R\times\T^3)}\leq 1,\qquad \sup_{K\geq 1,\,|t|\leq 1,\,x\in\mathbb{R}\times\T^3}K^{-1}|P_Ke^{it\Delta}\psi(x)|\leq\delta,
\end{equation}
for some $\delta\in (0,1]$. Then, for any $R>0$ there is $C(R)\geq 1$ such that
\begin{equation}\label{ml2}
N\|\,|\nabla^1 e^{it\Delta}\widetilde{P}_\delta^1\psi|\,\|_{L^2_{x,t}(\{(x,t)\in\R\times\T^3\times[-1,1]:|x-x_0|\leq RN^{-1},\,|t-t_0|\leq R^2N^{-2}\})}\leq C(R)\delta^{1/100}
\end{equation}
for any $N\geq 1$, any $t_0\in\mathbb{R}$, and any $x_0\in\R\times\T^3$.
\end{lemma}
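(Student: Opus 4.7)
Decompose $\psi=\sum_{K\geq 1}P_K\psi$ dyadically and set $g_K:=P_K\widetilde{P}_\delta^1\psi$, which has Fourier support in $\{|\xi|\sim K,\ |\xi_1|\geq \delta K\}$. By Minkowski in $K$, we reduce to bounding $N\|\nabla^1 e^{it\Delta}g_K\|_{L^2(B)}$ and summing over $K$, where $B:=\{|x-x_0|\leq R/N\}\times\{|t-t_0|\leq R^2/N^2\}$.

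Two main estimates on each dyadic piece do the work. The first is a \emph{hypothesis bound}: from $\|P_Ke^{it\Delta}\psi\|_{L^\infty_{x,t}}\leq K\delta$ and Bernstein one has $\|\nabla^1 e^{it\Delta}g_K\|_{L^\infty_{x,t}}\lesssim K^2\delta$, and hence
\[
N\|\nabla^1 e^{it\Delta}g_K\|_{L^2(B)}\lesssim N|B|^{1/2}K^2\delta\lesssim R^3\delta(K/N)^2,
\]
which is summable over $K\leq N$ to $\lesssim R^3\delta$. The second is a \emph{Kato local smoothing bound}: taking the Fourier transform in $x'\in\T^3$ reduces $e^{it\Delta}$ to $e^{it\partial_{x_1}^2}$ up to an $x_1$-independent phase $e^{-it|\xi'|^2}$, so the one-dimensional Kato inequality $\||D_{x_1}|^{1/2}e^{it\partial_{x_1}^2}h\|_{L^\infty_{x_1}L^2_t(\R)}\lesssim \|h\|_{L^2}$ and Plancherel in $\xi'$ yield, after the further dyadic decomposition $g_K=\sum_{A\in[\delta K, K]}g_{K,A}$ into $\xi_1$-shells,
\[
\|e^{it\Delta}g_{K,A}\|_{L^\infty_{x_1}L^2_{x',t}(\R\times\T^3\times\R)}\lesssim A^{-1/2}\|g_{K,A}\|_{L^2}.
\]
Bernstein for the derivatives and Cauchy-Schwarz over $A\in[\delta K, K]$ then give $\|\nabla^1 e^{it\Delta}g_K\|_{L^2(B)}\lesssim (R/N)^{1/2}K^{1/2}\delta^{-1/2}\|g_K\|_{L^2}$, the $\delta^{-1/2}$ coming from the worst shell $A=\delta K$ together with the full Bernstein factor $K$ for the periodic derivatives $\partial_{x_j}$, $j\geq 2$, which enjoy no direct dispersive gain.

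To conclude, these two bounds are interpolated at each $K$---possibly with Proposition~\ref{Stric2} as a third ingredient---balancing the $\delta^1$ gain of the hypothesis against the $\delta^{-1/2}$ loss of Kato. The low-frequency range $K\leq N$ is handled by the hypothesis bound alone. For $K>N$, a geometric-mean interpolation with parameter close to $1$, or a three-way interpolation involving Strichartz, yields a bound of the form $C(R)\delta^\beta$ with some fixed $\beta>0$ after summing the resulting geometric series in $K$ around the crossover scale where the bounds balance. The exponent $1/100$ in the statement is much weaker than the natural bound and absorbs without difficulty the logarithmic losses from the $A$- and $K$-sums.

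Main obstacle: the non-dispersive periodic directions $\partial_{x_j}$ with $j\geq 2$. Kato smoothing disperses only in $x_1$, so the full Bernstein factor $K$ multiplies the $A^{-1/2}$ from Kato, producing the $\delta^{-1/2}$ loss at $A=\delta K$. The cutoff $|\xi_1|\geq \delta K$ enforced by $\widetilde{P}_\delta^1$ is precisely what keeps this loss finite and allows the interpolation with the hypothesis bound to extract a positive power of $\delta$; without the $\widetilde{P}_\delta^1$ projection, the conclusion would fail.
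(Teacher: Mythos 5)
Your proposal follows the same structural skeleton as the paper's proof: dyadic decomposition in $K$, a pointwise/Lebesgue bound coming from the hypothesis, and a Kato-type local smoothing estimate in the $x_1$-direction whose loss is controlled precisely by the frequency cutoff $\widetilde{P}^1_\delta$. The paper derives the smoothing estimate (m15) directly by a change of variables and Plancherel rather than by citing the 1D Kato inequality, but these are the same ingredient. One small technical difference: for the hypothesis-based bound the paper interpolates the $L^\infty$ information (which is $\delta K$) against the $L^4$ Sobolev bound (which is $\lesssim 1$, uniformly in $K$) to produce an $L^8$ bound $\lesssim \delta^{1/2}K^{1/2}$, leading after Bernstein and restriction to $\delta^{1/2}(K/N)^{3/2}$; you instead use $L^\infty$ directly, giving $\delta(K/N)^2$. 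Both are legitimate, though for large $K/N$ the $L^8$ version is tighter.

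The genuine gap is in the concluding step. You claim that a ``geometric-mean interpolation with parameter close to $1$'' between the hypothesis bound $\delta(K/N)^2$ and the Kato bound $\delta^{-1/2}(N/K)^{1/2}$ yields $C(R)\delta^\beta$ with $\beta>0$ after summing in $K$. This does not work as stated: the pointwise minimum of your two bounds is maximized at the crossover $K/N\sim\delta^{-3/5}$, where both equal $\delta^{-1/5}$, a negative power of $\delta$. A geometric mean with exponent $\theta$ close to $1$ produces $\delta^{(3\theta-1)/2}(K/N)^{(5\theta-1)/2}$, which grows in $K$ for $K>N$ (so is not summable on that side), while choosing $\theta$ small enough to decay in $K$ forces the $\delta$-exponent negative. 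The same difficulty persists if one throws in Proposition~\ref{Stric2}: the Strichartz estimate carries no factor of $\delta$, and one can check that no convex combination of the three exponent vectors is simultaneously $\delta$-small and $K$-summable on $K>N$. So the ``balancing'' you describe does not, by itself, produce the positive power $\delta^{\beta}$; the sum over the intermediate range $N\lesssim K\lesssim \delta^{-1}N$ must be handled with additional care. The paper, for its part, states (ml4) with the explicit threshold $K\geq\delta^{-5}N$ before asserting that (ml2) follows, indicating that the split is not a simple crossover argument; your write-up should make this step precise rather than assert that interpolation suffices, since as written the interpolation does not close.
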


\begin{proof}[Proof of Lemma \ref{locsmo}] We may assume $R=1$ and $x_0=0$. It follows from \eqref{ml1} that for any $K\geq 1$ and $t\in[-1,1]$
\begin{equation*}
\|P_Ke^{it\Delta}\psi\|_{L^\infty(\R\times\T^3)}\lesssim \delta K,\qquad \|P_Ke^{it\Delta}\psi\|_{L^4(\R\times\T^3)}\lesssim 1,
\end{equation*}
therefore, by interpolation,
\begin{equation*}
\|P_Ke^{it\Delta}\psi\|_{L^{8}(\R\times\T^3)}\lesssim \delta^{1/2} K^{1/2}.
\end{equation*}
Thus, for any $K\geq 1$ and $t\in[-1,1]$
\begin{equation*}
\|\,|\nabla^1(P_Ke^{it\Delta}\psi)|\,\|_{L^{8}(\R\times\T^3)}\lesssim \delta^{1/2} K^{3/2},
\end{equation*}
which shows that, for any $K\geq 1$, $N\geq 1$, and $t_0\in\mathbb{R}$
\begin{equation}\label{ml3}
N\|\,|\nabla^1(P_Ke^{it\Delta}\widetilde{P}_\delta^1\psi)|\,\|_{L^2_{x,t}(\{(x,t)\in\R\times\T^3\times[-1,1]:|x|\leq N^{-1},\,|t-t_0|\leq N^{-2}\})}\lesssim \delta^{1/2} K^{3/2}N^{-3/2}.
\end{equation}
We will prove below that for any $N\geq 1$ and $K\geq \delta^{-5}N$
\begin{equation}\label{ml4}
N\|\,|\nabla^1(P_Ke^{it\Delta}\widetilde{P}_\delta^1\psi)|\,\|_{L^2_{x,t}(\{(x,t)\in\R\times\T^3\times[-1,1]:|x|\leq N^{-1},\,|t-t_0|\leq N^{-2}\})}\lesssim N^{1/2}(\delta K)^{-1/2}.
\end{equation}
The desired bound \eqref{ml2} follows from \eqref{ml3} and \eqref{ml4}.

To prove the local smoothing bound \eqref{ml4} it suffices to prove the stronger bound
\begin{equation}\label{m15}
\sup_{x_1\in\mathbb{R}}\|P_Ke^{it\Delta}\widetilde{P}_\delta^1\phi\|_{L^2_{x',t}(\T^3\times\R)}\lesssim (\delta K)^{-1/2}\|\phi\|_{L^2(\R\times\T^3)}
\end{equation}
for any $\phi\in L^2(\R\times\T^3)$, $\delta\in(0,1]$, and $K\geq 1$. In view of the Fourier inversion formula it suffices to prove that
\begin{equation*}
\begin{split}
\Big\|\int_{\R\times\Z^3}m(\xi_1,\xi')e^{ix'\cdot\xi'}e^{-it\xi_1^2}e^{-it|\xi'|^2}&(1-\eta^1)(\xi_1/(\delta K))\,d\xi_1d\xi'\Big\|_{L^2_{x',t}(\T^3\times\R)}\\
&\lesssim (\delta K)^{-1/2}\|m\|_{L^2(\R\times\Z^3)}
\end{split}
\end{equation*}
for any $m\in L^2(\R\times\Z^3)$, $\delta\in(0,1]$, and $K\geq 1$. This is a direct consequence of the Plancherel identity.
\end{proof}

\section{Local well-posedness and stability theory}\label{localwp}

The purpose of this section is to work out the local theory results that allow us to connect nearby intervals of nonlinear evolution. A consequence of Proposition \ref{Stric2} and \cite[Theorem 1.1]{HeTaTz2} is that the Cauchy problem for \eqref{eq1} is locally well-posed. However, here we want slightly more precise results.

We want to control the evolution in terms of the $Z$-norm. It turns out that it is more convenient to consider an intermediate quantity first,
\begin{equation*}
\|u\|_{Z^\prime(I)}:=\|u\|_{Z(I)}^{3/4}\|u\|_{X^1(I)}^{1/4}.
\end{equation*}

\begin{lemma}\label{lem1}
Assuming $|I|\leq 1$, $P_{N_i}u_i=u_i$ and $N_1\ge N_2$, there holds that
\begin{equation}\label{NLEst}
\begin{split}
\Vert u_1u_2\Vert_{L^2_{x,t}(\mathbb{R}\times\mathbb{T}^3\times I)}&\lesssim \left(\frac{N_2}{N_1}+\frac{1}{N_2}\right)^\kappa\Vert u_1\Vert_{Y^0(I)}\Vert u_2\Vert_{Z^\prime(I)}
\end{split}
\end{equation}
for some $\kappa>0$.
\end{lemma}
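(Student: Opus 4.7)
The plan is a frequency-cube decomposition of $u_1$ at scale $N_2$, together with two complementary refinements that produce the two gain factors.

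I decompose $u_1 = \sum_C P_C u_1$ with $C$ ranging over cubes of side $N_2$ tiling the $N_1$-frequency annulus. Because $u_2$ is Fourier-localized to an $N_2$-annulus, the products $P_C u_1 \cdot u_2$ have Fourier supports of bounded overlap, so Plancherel gives $\|u_1 u_2\|_{L^2}^2 \lesssim \sum_C \|P_C u_1\cdot u_2\|_{L^2}^2$. Unpacking the $V^2_\Delta$ variation-norm definition and applying $L^2$-orthogonality of disjoint frequency projections inside each increment yields the key inequality $\sum_C \|P_C u_1\|_{V^2_\Delta}^2 \le \|u_1\|_{Y^0}^2$. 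For each cube I apply H\"older $\|P_C u_1\cdot u_2\|_{L^2} \le \|P_C u_1\|_{L^p}\|u_2\|_{L^{p'}}$ with $p$ slightly above the Strichartz threshold $p_1=18/5$, and combine \eqref{UpEst} with the embedding $V^2_\Delta\hookrightarrow U^p_\Delta$ (valid for $p>2$) to obtain $\|P_C u_1\|_{L^p}\lesssim N_2^{2-6/p}\|P_C u_1\|_{V^2_\Delta}$. The conjugate factor $\|u_2\|_{L^{p'}}$ is bounded by H\"older interpolation between $L^4_{x,t}$ (controlled by $N_2^{-1/2}\|u_2\|_Z$ from the definition of $Z$) and a higher $L^q_{x,t}$ norm (controlled by $\|u_2\|_{X^1}$ via Strichartz applied to $u_2$), choosing the interpolation weight $3/4$ on the $L^4$ side so as to reproduce the $Z'$-structure $\|u_2\|_Z^{3/4}\|u_2\|_{X^1}^{1/4}$.

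At the balanced choice of exponents this bookkeeping yields exactly the loss-less baseline $\|u_1 u_2\|_{L^2}\lesssim\|u_1\|_{Y^0}\|u_2\|_{Z'}$. The claimed gain factor $(N_2/N_1+1/N_2)^\kappa$ would then be obtained by combining two complementary refinements via $\min(a,b)\le a+b$: (i) a $1/N_2^\kappa$ improvement from exploiting the strict slack $p>p_1$ together with a Bernstein- or local-smoothing-type gain in the non-compact $\R$-direction (in the spirit of Lemma \ref{locsmo}); and (ii) an $(N_2/N_1)^\kappa$ improvement from a bilinear Strichartz estimate: reducing to linear Schr\"odinger evolutions via the atomic structure of $V^2_\Delta$, the well-separated frequency scales enforce transversality of the Schr\"odinger characteristic surfaces along the single dispersive direction $\R$.

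The main obstacle is the bilinear refinement (ii). The $Z'$ norm is tightly calibrated so that the direct H\"older--Strichartz bookkeeping produces exactly no gain, so genuine new input is needed to extract $\kappa>0$, and on $\R\times\T^3$ the bilinear Strichartz gain is constrained by having only a single dispersive direction; this likely forces the use of the circle-method counting estimates underlying Proposition \ref{Stric2} when carrying out the bilinear argument on the linear-evolution atoms.
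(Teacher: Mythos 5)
There is a genuine gap, and you've essentially flagged it yourself: the gain factor is not produced by your argument. Here is how the paper handles it, and why your structure is harder than necessary.

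The paper first proves the \emph{lossless} bound in the stronger form
\[
\Vert u_1u_2\Vert_{L^2(\R\times\T^3\times I)}\ \lesssim\ \Vert u_1\Vert_{Y^0(I)}\,\Vert u_2\Vert_{Z(I)},
\]
with the genuine $Z$ norm on the right (not $Z'$). This is done by exactly your cube decomposition at scale $N_2$, but with a plain $L^4\times L^4$ H\"older: $\Vert P_Cu_1\,u_2\Vert_{L^2}\le \Vert P_Cu_1\Vert_{L^4}\Vert u_2\Vert_{L^4}$, then \eqref{UpEst} with $p=4>p_1$ to get $\Vert P_Cu_1\Vert_{L^4}\lesssim N_2^{1/2}\Vert P_Cu_1\Vert_{U^4_\Delta}\lesssim N_2^{1/2}\Vert P_Cu_1\Vert_{Y^0}$, and $N_2^{1/2}\Vert u_2\Vert_{L^4}\lesssim\Vert u_2\Vert_Z$; squaring and summing over $C$ gives the claim. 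Your more elaborate interpolation scheme with exponents $p,p',q$ produces only the weaker $\Vert u_2\Vert_{Z'}$ on the right, so it is strictly worse for no benefit. Your observation that ``the $Z'$ norm is tightly calibrated so that no gain can appear'' is a symptom of this: the $Z'$ structure should not be forced in at the level of the lossless estimate at all.

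Then the gain factor is simply the bilinear bound \eqref{pla100} imported verbatim from \cite[Proposition~2.8]{HeTaTz2}: $\Vert u_1u_2\Vert_{L^2}\lesssim(N_2/N_1+1/N_2)^{\delta_0}\Vert u_1\Vert_{Y^0}\Vert u_2\Vert_{Y^1}$. That lemma is a general consequence of having scale-invariant Strichartz estimates at some $p<4$, i.e.\ exactly of Proposition~\ref{Stric2}; it does not require local smoothing (Lemma~\ref{locsmo}), which you invoke but which plays no role in this lemma. The final step is a $3/4$--$1/4$ geometric mean of the two estimates, which \emph{simultaneously} manufactures the gain $(N_2/N_1+1/N_2)^{\delta_0/4}$ and the factor $\Vert u_2\Vert_Z^{3/4}\Vert u_2\Vert_{Y^1}^{1/4}\lesssim\Vert u_2\Vert_{Z'}$ via $Y^1\hookrightarrow$ being dominated by $X^1$. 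So the $Z'$ dependence falls out for free from the interpolation; you do not need to carry it through the gain-extraction.

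In short: your decomposition and the role of the Strichartz slack $p>p_1$ are on the right track, but the proof is incomplete without \cite[Prop.~2.8]{HeTaTz2} (your item (ii)), and the overall architecture should be: prove the lossless bound with the $Z$ norm, cite the bilinear estimate with the $Y^1$ norm, and interpolate — rather than trying to inject the gain into a bound already written with $Z'$.
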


\begin{proof}[Proof of Lemma \ref{lem1}] For any cube $C$ centered at $\xi_0\in\mathbb{Z}^4$ and of size $N_2$, using Strichartz estimates we find that
\begin{equation*}
\begin{split}
\Vert (P_Cu_1)u_2\Vert_{L^2(\mathbb{R}\times\mathbb{T}^3\times I)}
&\lesssim \Vert P_Cu_1\Vert_{L^4(\mathbb{R}\times\mathbb{T}^3\times I)}\Vert u_2\Vert_{L^4(\mathbb{R}\times\mathbb{T}^3\times I)}\\
&\lesssim \Vert P_Cu_1\Vert_{U^4_\Delta(I,L^2)}\left(N_2^{\frac{1}{2}}\Vert u_2\Vert_{L^4(\mathbb{R}\times\mathbb{T}^3\times I)}\right)\\
&\lesssim \Vert P_Cu_1\Vert_{Y^0(I)}\left(N_2^{\frac{1}{2}}\Vert u_2\Vert_{L^4(\mathbb{R}\times\mathbb{T}^3\times I)}\right).
\end{split}
\end{equation*}
Using othogonality and the summability properties of $Y^0(I)$, we get that
\begin{equation*}
\begin{split}
\Vert u_1u_2\Vert_{L^2(\mathbb{R}\times\mathbb{T}^3\times I)}^2&\lesssim \sum_C\Vert P_Cu_1\Vert_{Y^0(I)}^2\left(N_2^{\frac{1}{2}}\Vert u_2\Vert_{L^4(\mathbb{R}\times\mathbb{T}^3\times I)}\right)^2\lesssim \Vert u_1\Vert_{Y^0(I)}^2\Vert u_2\Vert_{Z(I)}^2.
\end{split}
\end{equation*}

Independently, using Proposition \ref{Stric2}, we also have from \cite[Proposition 2.8]{HeTaTz2} that
\begin{equation}\label{pla100}
\begin{split}
\Vert u_1u_2\Vert_{L^2(\mathbb{R}\times\mathbb{T}^3\times I)}
&\lesssim \left(\frac{N_2}{N_1}+\frac{1}{N_2}\right)^{\delta_0}\Vert u_1\Vert_{Y^0(I)}\Vert u_2\Vert_{Y^1(I)}
\end{split}
\end{equation}
for some $\delta_0>0$. The desired bound \eqref{NLEst} follows from these two estimates and the inclusion $X^1(I)\hookrightarrow Y^1(I)$.
\end{proof}

Now we can obtain our main nonlinear estimate:

\begin{lemma}\label{NLEst2}
For $u_k\in X^1(I)$, $k=1,2,3$, $|I|\leq 1$, the estimate
\begin{equation*}
\Vert \Pi_{i=1}^3\tilde{u}_k\Vert_{N(I)}\lesssim \sum_{\{i,j,k\}=\{1,2,3\}}\Vert u_i\Vert_{X^1(I)}\Vert u_j\Vert_{Z^\prime(I)}\Vert u_k\Vert_{Z^\prime(I)}
\end{equation*}
holds true, where $\tilde{u}_k=u_k$ or $\tilde{u}_k=\overline{u}_k$.
\end{lemma}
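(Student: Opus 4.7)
The plan is a dyadic decomposition combined with the bilinear Strichartz refinement of Lemma \ref{lem1}, following the standard duality framework of the HTT function spaces. By the $U^2/V^2$ duality built into the atomic definition of $X^1$, the $N(I)$-norm is controlled by
$$\|h\|_{N(I)}\lesssim \sup\Big\{\Big|\iint_{I\times\R\times\T^3}h\,\overline{v}\,dx\,dt\Big|\,:\,\|v\|_{Y^{-1}(I)}\le 1\Big\},$$
(cf.\ Proposition 2.11 of \cite{HeTaTz2}). I apply this with $h=\prod_{k=1}^3\tilde u_k$, Littlewood--Paley decompose $v=\sum_{N_0}P_{N_0}v$ and $u_k=\sum_{N_k}P_{N_k}u_k$, and reduce to summing the resulting quadrilinear integral over dyadic tuples $(N_0,N_1,N_2,N_3)$. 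After relabeling, I may assume $N_1\ge N_2\ge N_3$; Fourier support considerations then force $N_0\lesssim N_1$ together with at least one of $N_0\sim N_1$ or $N_1\sim N_2$.

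For each tuple I group the four pieces into two pairs, putting the highest remaining frequency with the lowest remaining frequency so that each pair is maximally unbalanced (and Lemma \ref{lem1} yields the largest gain). Concretely, using Cauchy--Schwarz,
$$\Big|\iint P_{N_0}v\prod_{k=1}^3P_{N_k}\tilde u_k\,dx\,dt\Big|\le \|P_{N_0}v\cdot P_{N_3}\tilde u_3\|_{L^2_{x,t}}\,\|P_{N_1}\tilde u_1\cdot P_{N_2}\tilde u_2\|_{L^2_{x,t}},$$
(with a cognate pairing in the subcase $N_0\sim N_1$), and I invoke Lemma \ref{lem1} on each factor. Using the elementary embeddings $\|P_{N_k}w\|_{Y^0}\lesssim N_k^{-1}\|P_{N_k}w\|_{X^1}$ (which follow from the cube-level definition of $X^1$) and $\|P_{N_0}v\|_{Y^0}\lesssim N_0\|P_{N_0}v\|_{Y^{-1}}$, I arrange so that the highest-frequency factor (here $\tilde u_1$) is charged its $X^1$ norm and the others their $Z'$ norms. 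Summing the symmetric version over which index plays the role of the highest one produces the stated right-hand side.

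The step I expect to be hardest is the dyadic summation in the regime $N_1\sim N_2\gg N_0,N_3$, where the pair $(P_{N_1}\tilde u_1,P_{N_2}\tilde u_2)$ produces no ratio gain from Lemma \ref{lem1} and one is left with only the weaker factor $N_2^{-\kappa}$. Here the interpolated definition $\|u\|_{Z'(I)}=\|u\|_{Z(I)}^{3/4}\|u\|_{X^1(I)}^{1/4}$ becomes essential: the small $X^1$-content of $\|u_2\|_{Z'}$ and $\|u_3\|_{Z'}$ costs only an arbitrarily small power $N^{\varepsilon}$ of the relevant dyadic indices, which is absorbed by the $N_2^{-\kappa}$ gain and by the ratio gains generated on the other pair, leaving room to perform Schur-type Cauchy--Schwarz summation in the dyadic indices. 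The complementary case $N_0\sim N_1$ is strictly easier, since then both pairings are genuinely unbalanced and both Lemma \ref{lem1} factors produce ratio gains that sum directly.
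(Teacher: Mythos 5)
Your global framework matches the paper: reduce via the $U^2/V^2$ duality to a quadrilinear integral, decompose dyadically, split with Cauchy--Schwarz into two bilinear $L^2$ factors, and estimate each with the bilinear refinement of Lemma \ref{lem1}. But the resolution you propose for the regime you correctly flag as the hard one, $N_1\sim N_2\gg N_0,N_3$, does not work as written, and it also misses the substitution the paper actually makes there.

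Two concrete problems. First, your displayed pairing $(P_{N_0}v,P_{N_3}\tilde u_3)$ and $(P_{N_1}\tilde u_1,P_{N_2}\tilde u_2)$ is \emph{not} the ``highest with lowest'' pairing you announce: it puts the two highest comparable frequencies together. Lemma \ref{lem1} applied to the balanced pair $(N_1,N_2)$ with $N_1\sim N_2\gg 1$ yields the factor $\big(\tfrac{N_2}{N_1}+\tfrac{1}{N_2}\big)^\kappa\sim 1$, \emph{not} a decaying factor $N_2^{-\kappa}$. There is simply no ratio gain from that pair, so the mechanism you invoke (``weaker factor $N_2^{-\kappa}$'' absorbing small powers coming from the $Z'$ interpolation) is not available. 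Second, the other pair $(P_{N_0}v,P_{N_3}\tilde u_3)$ cannot always be run through Lemma \ref{lem1}: the lemma outputs the \emph{low-frequency} factor in $Z'$, so if $N_3>N_0$ you would need a bound $\|P_{N_0}v\|_{Z'}\lesssim N_0\|P_{N_0}v\|_{Y^{-1}}$, which is not part of the toolkit; $v$ only lives in $Y^{-1}$, not $X^1$.

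What the paper does differently in this regime: it keeps the pairing $(N_1,N_3)$ and $(N_0,N_2)$ (consistent with $N_1\sim\max(N_0,N_2)\ge N_2\ge N_3$), estimates $(N_1,N_3)$ by Lemma \ref{lem1} to get the ratio gain $(\tfrac{N_3}{N_1}+\tfrac1{N_3})^\kappa$, and then handles $(N_0,N_2)$ by a \emph{different} estimate, not Lemma \ref{lem1}: a direct $L^4\times L^4$ H\"older bound, identifying $N_2^{1/2}\|P_{N_2}u_2\|_{L^4}$ as the $Z$-content of $u_2$ and using the Strichartz-type embedding \eqref{UpEst} together with $Y^0\hookrightarrow U^4_\Delta$ for the $v$-factor. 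This produces an off-diagonal weight $(N_0/N_1)^{3/2}$ to which Schur's lemma applies. The $Z'$ interpolation plays no role in absorbing dyadic losses; it enters only because $\|u\|_{Z'}$ is the intermediate norm needed to close the fixed-point and stability arguments later. So while your overall architecture is right, the key technical step is a genuine gap: the balanced-pair estimate does not produce the gain you rely on, and the correct fix is the H\"older/Strichartz substitution on the $(N_0,N_2)$ pair.
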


\begin{proof}[Proof of Lemma \ref{NLEst2}] We decompose
\begin{equation*}
\begin{split}
\tilde{u}_1\tilde{u}_2\tilde{u}_3&=\sum_{N_1\geq 1}P_{N_1}\tilde{u}_1\cdot P_{\leq N_1}\tilde{u}_2\cdot P_{\leq N_1}\tilde{u}_3\\
&+\sum_{N_2\geq 2}P_{\leq N_2/2}\tilde{u}_1\cdot P_{N_2}\tilde{u}_2\cdot P_{\leq N_2}\tilde{u}_3+\sum_{N_3\geq 2}P_{\leq N_3/2}\tilde{u}_1\cdot P_{\leq N_3/2}\tilde{u}_2\cdot P_{N_3}\tilde{u}_3.
\end{split}
\end{equation*}
By symmetry, it suffices to prove that
\begin{equation}\label{refin}
\Big\|\sum_{N_1\geq 1}P_{N_1}\tilde{u}_1\cdot P_{\leq 2^{10}N_1}\tilde{u}_2\cdot P_{\leq 2^{10}N_1}\tilde{u}_3\Big\|_{N(I)}\lesssim \|u_1\|_{X^1(I)}\|u_2\|_{Z'(I)}\|u_3\|_{Z'(I)}.
\end{equation}

From \cite[Proposition 2.10]{HeTaTz2}, it suffices to prove the multilinear estimate
\begin{equation*}
\begin{split}
\Big\vert\int_{\mathbb{R}\times\mathbb{T}^3\times I}u_0\cdot \sum_{N_1\geq 1}&P_{N_1}\tilde{u}_1\cdot P_{\leq 2^{10}N_1}\tilde{u}_2\cdot P_{\leq 2^{10}N_1}\tilde{u}_3\,dxdt\Big\vert\\
&\lesssim \Vert u_0\Vert_{Y^{-1}(I)}\Vert u_1\Vert_{X^1(I)}\Vert u_2\Vert_{Z^\prime(I)}\Vert u_3\Vert_{Z^\prime(I)}
\end{split}
\end{equation*}
for any $u_0\in Y^{-1}(I)$. To prove this, we decompose into dyadic shells
\begin{equation*}
u_k=\sum_{N_k}P_{N_k}u_k
\end{equation*}
and after relabeling, we need to estimate
\begin{equation*}
S=\sum_{\mathcal{N}}\Vert P_{N_1}\tilde{u}_1P_{N_3}\tilde{u}_3\Vert_{L^2_{x,t}(\mathbb{R}\times\mathbb{T}^3\times I)}\Vert P_{N_0}\tilde{u}_0P_{N_2}\tilde{u}_2\Vert_{L^2_{x,t}(\mathbb{R}\times\mathbb{T}^3\times I)}
\end{equation*}
where $\mathcal{N}$ is the set of $4$-tuples $(N_0,N_1,N_2, N_3)$ such that
\begin{equation*}
N_1\sim \max(N_2,N_0)\ge N_2\ge N_3.
\end{equation*}
We first consider the case $N_0\sim N_1$. In this case, using \eqref{NLEst}, summing first in $N_3$, then in $N_2$ and finally in $N_1$, we compute that
\begin{equation*}
\begin{split}
S_1&\lesssim\sum_{\mathcal{N}_1}\left(\frac{N_3}{N_1}+\frac{1}{N_3}\right)^\kappa\left(\frac{N_2}{N_0}+\frac{1}{N_2}\right)^\kappa \Vert P_{N_0}u_0\Vert_{Y^0(I)}\Vert P_{N_1}u_1\Vert_{Y^0(I)}\Vert P_{N_2}u_2\Vert_{Z^\prime(I)}\Vert P_{N_3}u_3\Vert_{Z^\prime(I)}\\
&\lesssim \Vert u_0\Vert_{Y^{-1}(I)} \Vert u_1\Vert_{X^1(I)}\Vert u_2\Vert_{Z^\prime(I)}\Vert u_3\Vert_{Z^\prime(I)}.
\end{split}
\end{equation*}
Now, we consider the case $N_0\le N_2$. In this case, using \eqref{UpEst} and \eqref{NLEst}, we see that
\begin{equation*}
\begin{split}
S_2&\lesssim\sum_{N_1\sim N_2\ge N_0,N_3}\Vert \left(P_{N_1}\tilde{u}_1\right)\left(P_{N_3}\tilde{u}_3\right)\Vert_{L^2_{x,t}(\mathbb{R}\times\mathbb{T}^3\times I)}\Vert \left(P_{N_2}\tilde{u}_2\right)\left(P_{N_0}\tilde{u}_0\right)\Vert_{L^2_{x,t}(\mathbb{R}\times\mathbb{T}^3\times I)}\\
&\lesssim \sum_{N_1\sim N_2\ge N_0,N_3}\left(\frac{N_3}{N_1}+\frac{1}{N_3}\right)^\kappa\Vert P_{N_1}u_1\Vert_{Y^0(I)}\Vert u_3\Vert_{Z^\prime(I)}\Vert P_{N_2}u_2\Vert_{L^4(\mathbb{R}\times\mathbb{T}^3\times I)}\Vert P_{N_0}u_0\Vert_{L^4(\mathbb{R}\times\mathbb{T}^3\times I)}\\
&\lesssim \Vert u_3\Vert_{Z^\prime(I)}\sum_{N_1\sim N_2\ge N_0}N_1^{-\frac{3}{2}}\Vert P_{N_1}u_1\Vert_{Y^1(I)}\left(N_2^\frac{1}{2}\Vert P_{N_2}u_2\Vert_{L^4(\mathbb{R}\times\mathbb{T}^3\times I)}\right)N_0^\frac{1}{2}\Vert P_{N_0}u_{0}\Vert_{U^4_\Delta(I,L^2)}.\\
\end{split}
\end{equation*}
We use the embedding $Y^0\hookrightarrow U^4_\Delta(I,L^2)$ to obtain, with Schur's lemma,
\begin{equation*}
\begin{split}
S_2&\lesssim \Vert u_3\Vert_{Z^\prime(I)}\Vert u_2\Vert_{Z^\prime(I)}\sum_{N_1\ge N_0}\left(\frac{N_0}{N_1}\right)^{\frac{3}{2}}\Vert P_{N_1}u_1\Vert_{Y^1(I)}\Vert P_{N_0}u_0\Vert_{Y^{-1}(I)}\\
&\lesssim \Vert u_3\Vert_{Z^\prime(I)}\Vert u_2\Vert_{Z^\prime(I)}\Vert u_1\Vert_{X^1(I)}\Vert u_0\Vert_{Y^{-1}(I)}.
\end{split}
\end{equation*}
This gives the desired estimate \eqref{refin} and finishes the proof.
\end{proof}

With these two estimates, we can state our local existence result and a criterion for global existence.

\begin{proposition}\label{LocTheory}
Assume that $E>0$ is fixed. There exists $\delta_0=\delta_0(E)$ such that if
\begin{equation}\label{CondForLocEx}
\Vert e^{it\Delta}u_0\Vert_{Z^\prime(I)}<\delta
\end{equation}
for some $\delta\le\delta_0$, some interval $0\in I$ with $\vert I\vert\le 1$ and some function $u_0\in H^1(\mathbb{R}\times\mathbb{T}^3)$ satisfying
$\Vert u_0\Vert_{H^1}\le E$, then there exists a unique strong solution to \eqref{eq1} $u\in X^1(I)$ such that $u(0)=u_0$. Besides,
\begin{equation}\label{UAlmostLin}
\Vert u-e^{it\Delta}u_0\Vert_{X^1(I)}\le \delta^\frac{5}{3}.
\end{equation}
\end{proposition}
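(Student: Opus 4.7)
The plan is to solve the Duhamel equation $u(t) = e^{it\Delta}u_0 - i\int_0^t e^{i(t-s)\Delta}(u|u|^2)(s)\,ds$ by a contraction argument on the perturbation $w := u - v$, where $v(t) := e^{it\Delta}u_0$. Thus $w$ must be a fixed point of
\begin{equation*}
\Phi(w)(t) := -i\int_0^t e^{i(t-s)\Delta}\bigl[(v+w)|v+w|^2\bigr](s)\,ds,
\end{equation*}
which I will construct in the closed ball $B := \{w\in X^1(I):\|w\|_{X^1(I)}\le \delta^{5/3}\}$ with the $X^1(I)$ metric. From the definition of the $X^s$ norms and standard $U^2_\Delta$ properties, $\|v\|_{X^1(I)}\lesssim\|u_0\|_{H^1}\le E$; the hypothesis gives $\|v\|_{Z'(I)}<\delta$; and since $\|\cdot\|_{Z(I)}\lesssim\|\cdot\|_{X^1(I)}$, combined with the interpolation $\|u\|_{Z'}=\|u\|_{Z}^{3/4}\|u\|_{X^1}^{1/4}$, one has $\|w\|_{Z'(I)}\lesssim\|w\|_{X^1(I)}\le\delta^{5/3}$ on $B$.

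Next I would expand $(v+w)|v+w|^2$ into its eight trilinear pieces in $v,\overline{v},w,\overline{w}$ and estimate each by Lemma \ref{NLEst2}, which bounds every trilinear by a sum of three products in each of which one factor sits in $X^1(I)$ and the other two in $Z'(I)$. On each piece I assign a $v$-factor to the $X^1$ slot whenever one is available (that is, for every piece except the pure $w|w|^2$ one). With the four bounds listed above this gives
\begin{equation*}
\|\Phi(w)\|_{X^1(I)}\le C\bigl(E\delta^2 + E\delta^{8/3} + E\delta^{10/3} + \delta^5\bigr),
\end{equation*}
whose dominant contribution $CE\delta^2$ comes from the $v|v|^2$ term. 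Choosing $\delta_0 = \delta_0(E)$ so that $CE\delta_0^{1/3}\le 1/2$ forces $\|\Phi(w)\|_{X^1(I)}\le \delta^{5/3}$, so $\Phi(B)\subseteq B$.

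For the contraction step I would write $\Phi(w_1)-\Phi(w_2)$ as a sum of trilinear terms each carrying $w_1-w_2$ in one slot and $v$, $w_1$, or $w_2$ in the others, apply Lemma \ref{NLEst2} with the same bookkeeping, and obtain $\|\Phi(w_1)-\Phi(w_2)\|_{X^1(I)}\le \tfrac12\|w_1-w_2\|_{X^1(I)}$ after further shrinking $\delta_0$ if needed. Banach's theorem then produces the unique $w\in B$; setting $u = v+w$ yields a strong solution of \eqref{eq1.1} satisfying \eqref{UAlmostLin}. Uniqueness in the full class $X^1(I)$ follows from a standard subdivision argument: on any sufficiently short subinterval two $X^1$-solutions with the same data both lie in a contraction ball of the above type, so the set of times where they agree is open, closed, and nonempty. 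The main technical point to watch is that the smallness hypothesis \eqref{CondForLocEx} is on the \emph{weaker} norm $Z'$ of the free evolution — the $X^1$ norm of $v$ is merely bounded by $E$ — which forces the systematic placement of each $v$ factor into the $Z'$ slots of Lemma \ref{NLEst2}. Fortunately the cubic has exactly three factors while Lemma \ref{NLEst2} requires only one $X^1$ slot, so the two remaining slots deliver the crucial gain $\delta^2$.
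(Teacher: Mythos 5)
Your proof is correct and reaches the same conclusion by a slightly different fixed-point setup. You contract for the perturbation $w=u-v$ in the tiny ball $\{\|w\|_{X^1(I)}\le\delta^{5/3}\}$, expanding the cubic into its eight $v/w$-pieces. The paper instead contracts for $u$ itself in the larger set $\mathcal S=\{\|u\|_{X^1(I)}\le 2E,\ \|u\|_{Z'(I)}\le 2\delta\}$ --- which avoids the expansion entirely, since a single application of Lemma \ref{NLEst2} suffices for both boundedness and the Lipschitz estimate --- and then derives \eqref{UAlmostLin} a posteriori from $\|u-e^{it\Delta}u_0\|_{X^1}\lesssim\|u\|_{X^1}\|u\|_{Z'}^2\lesssim E\delta^2\le\delta^{5/3}$. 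Your route buys \eqref{UAlmostLin} automatically from membership in the ball, at the cost of the eight-term bookkeeping. One imprecision there: Lemma \ref{NLEst2} gives a \emph{sum} over all three assignments of factors to the $X^1$ and $Z'$ slots, so you cannot simply ``assign a $v$-factor to the $X^1$ slot whenever one is available.'' For example the $v\bar v w$ piece also produces a term $\|w\|_{X^1}\|v\|_{Z'}^2\lesssim\delta^{5/3}\cdot\delta^2=\delta^{11/3}$, and the $vw\bar w$ piece produces $\|w\|_{X^1}\|v\|_{Z'}\|w\|_{Z'}\lesssim\delta^{13/3}$; neither appears in your list. These extra terms are dominated by the ones you did list (e.g.\ $\delta^{11/3}\le E\delta^{8/3}$) once $\delta_0(E)$ is taken small enough that $\delta\lesssim E$, so your final estimate, your choice $CE\delta_0^{1/3}\le 1/2$, and the contraction step all survive, but the sum should be written out rather than chosen from.
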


\begin{proof}[Proof of Proposition \ref{LocTheory}]
We proceed by a standard fixed point argument.
For $E,a>0$, we consider the space
$$\mathcal{S}=\{u\in X^1(I):\Vert u\Vert_{X^1(I)}\le 2E,\Vert u\Vert_{Z^\prime(I)}\le a\}$$
and the mapping
\begin{equation*}
\Phi(v)=e^{it\Delta}u_0-i\int_0^te^{i(t-s)\Delta}\left(v(s)\vert v(s)\vert^2\right)ds.
\end{equation*}
We then see that, for $u,v\in \mathcal{S}$,
\begin{equation}\label{Contract}
\begin{split}
\Vert \Phi(u)-\Phi(v)\Vert_{X^1(I)}&\lesssim \left(\Vert u\Vert_{X^1(I)}+\Vert v\Vert_{X^1(I)}\right)\left(\Vert u\Vert_{Z^\prime(I)}+\Vert v\Vert_{Z^\prime(I)}\right)\Vert u-v\Vert_{X^1(I)}\\
&\lesssim Ea\Vert u-v\Vert_{X^1(I)}.
\end{split}
\end{equation}
Similarly, using Lemma \ref{NLEst2}, we also obtain that
\begin{equation*}
\begin{split}
\Vert \Phi(u)\Vert_{X^1(I)}&\le \Vert \Phi(0)\Vert_{X^1(I)}+C\Vert\Phi(v)-\Phi(0)\Vert_{X^1(I)}\le \Vert u_0\Vert_{H^1}+CEa^2,\\
\Vert \Phi(u)\Vert_{Z^\prime(I)}&\le \Vert \Phi(0)\Vert_{Z^\prime(I)}+C\Vert \Phi(u)-\Phi(0)\Vert_{X^1(I)}\le \delta+CEa^2.
\end{split}
\end{equation*}
Now, we choose $a=2\delta$ and we let $\delta_0=\delta_0(E)$ be small enough. We then see that $\Phi$ is a contraction on $\mathcal{S}$. Hence it possesses a unique fixed point $u$. Finally, using \eqref{NLEst} and taking $\delta_0$ smaller if necessary,
\begin{equation*}
\Vert u-e^{it\Delta}u_0\Vert_{X^1}\lesssim \Vert u\Vert_{X^1}\Vert u\Vert_{Z^\prime}^2\le \delta^\frac{5}{3}.
\end{equation*}
In order to justify the uniqueness in $X^1(I)$, assume that $u,v\in X^1(I)$ satisfy $u(0)=v(0)$ and choose $E=\max(\Vert u\Vert_{X^1(I)},\Vert v\Vert_{X^1(I)})$. Then there exists a possibly smaller open interval $0\in J$ such that $u,v\in\mathcal{S}_J$. By uniqueness of the fixed point in $\mathcal{S}$, $u_{\vert J}=v_{\vert J}$ and hence $\{u=v\}$ is closed and open in $I$.
This finishes the proof.
\end{proof}

\begin{lemma}[Z-norm controls the global existence]\label{BlowUpCriterion}
Assume that $I\subset\mathbb{R}$ is a bounded open interval, that $u$ is a strong solution of \eqref{eq1} and that
\begin{equation*}
\Vert u\Vert_{L^\infty_t(I,H^1)}\le E.
\end{equation*}
Then, if
\begin{equation*}
\Vert u\Vert_{Z(I)}<+\infty
\end{equation*}
there exists an open interval $J$ with $\overline{I}\subset J$ such that $u$ can be extended to a strong solution of \eqref{eq1} on $J$. In particular, if $u$ blows up in finite time, then $u$ blows up in the $Z$-norm.
\end{lemma}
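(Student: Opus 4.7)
The plan is to first use $\|u\|_{Z(I)}<\infty$ to upgrade to a bound on $\|u\|_{X^1(I)}$, then to use this $X^1$-bound to show $u$ extends continuously in $H^1$ to the closure $\overline{I}$, and finally to apply Proposition~\ref{LocTheory} at each endpoint to extend the solution through it. The main technical point is the bootstrap that produces the $X^1$-bound: it relies on the (standard) continuity of $t\mapsto\|u\|_{X^1([t_k,t])}$ and on the fact that this norm equals $\|u(t_k)\|_{H^1}$ at $t=t_k$, both consequences of the structure of the $U^2_\Delta$-based restriction spaces from \cite{HeTaTz,HeTaTz2}.

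Write $I=(a,b)$. Covering $I$ by finitely many unit intervals and using the definition of the $Z$-norm shows that $\int_I\big(\sum_N N^2\|P_Nu(t)\|_{L^4_x}^4\big)\,dt<\infty$; absolute continuity of this $L^1$-integral lets us partition $\overline{I}$ into finitely many closed subintervals $J_k=[t_k,t_{k+1}]$ of length $\le 1$ with $\|u\|_{Z(J_k\cap I)}\le\eta$, where $\eta=\eta(E)>0$ is to be chosen. For $t\in J_k$, Duhamel from $t_k$ (or from an interior point of $I$ close to $a$ when $t_0=a$) combined with Lemma~\ref{NLEst2} gives
\begin{equation*}
\|u\|_{X^1([t_k,t])}\le \|u(t_k)\|_{H^1}+C\,\|u\|_{X^1([t_k,t])}\,\|u\|_{Z'([t_k,t])}^2\le E+C\,\|u\|_{X^1([t_k,t])}^{3/2}\,\|u\|_{Z([t_k,t])}^{3/2}.
\end{equation*}
The function $\phi(t):=\|u\|_{X^1([t_k,t])}$ is continuous with $\phi(t_k)\le E$, so choosing $\eta$ small enough that $C(2E)^{3/2}\eta^{3/2}\le E/2$, a continuity/bootstrap argument forces $\phi(t)\le 2E$ throughout $J_k$. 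Concatenating over $k$ yields $\|u\|_{X^1(I)}\le M(E)<\infty$.

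Next, fix $s_0\in I$ and set $v(t):=e^{-i(t-s_0)\Delta}u(t)$, so that the integral equation gives $v(t)-v(s)=-i\int_s^t e^{-i(\sigma-s_0)\Delta}(u|u|^2)(\sigma)\,d\sigma$. The unitarity of $e^{i\tau\Delta}$ on $H^1$ together with the definition of the $N$-norm and Lemma~\ref{NLEst2} yield
\begin{equation*}
\|v(t)-v(s)\|_{H^1}=\Big\|\int_s^t e^{i(t-\sigma)\Delta}(u|u|^2)(\sigma)\,d\sigma\Big\|_{H^1}\le \|u|u|^2\|_{N([s,t])}\lesssim M(E)^{3/2}\,\|u\|_{Z([s,t])}^{3/2},
\end{equation*}
which tends to $0$ as $s,t\to b^-$. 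Hence $v(t)\to v_b$ in $H^1$, and by the strong continuity of the Schr\"odinger propagator, $u(t)=e^{i(t-s_0)\Delta}v(t)\to u_b:=e^{i(b-s_0)\Delta}v_b$ in $H^1$, with $\|u_b\|_{H^1}\le E$; a symmetric argument gives $u(t)\to u_a$ as $t\to a^+$. Since $\|e^{i(t-b)\Delta}u_b\|_{Z'((b-\varepsilon,b+\varepsilon))}\to 0$ as $\varepsilon\to 0$, Proposition~\ref{LocTheory} applied at time $b$ with datum $u_b$ produces a strong solution on some neighborhood of $b$; uniqueness in $X^1$ forces agreement with $u$ on the left, yielding an extension past $b$. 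The analogous construction at $a$ produces the desired open interval $J\supset\overline{I}$, and the blow-up criterion follows immediately.
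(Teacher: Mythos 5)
Your proof is correct, but it takes a genuinely different route from the paper's. You first use a partition-plus-bootstrap on the nonlinear estimate to get a quantitative bound $\|u\|_{X^1(I)}\le M(E)$, then show $u$ is $H^1$-Cauchy as $t\to b^-$ so that it extends continuously to $\overline{I}$, and finally apply Proposition~\ref{LocTheory} at the endpoint datum $u_b$. The paper never establishes a continuous extension of $u$ to the endpoint; instead it fixes $T_1$ just to the left of $T$ with $\|u\|_{Z(T_1,T)}\le\varepsilon$, runs a bootstrap on the quantity $h(s)=\|e^{i(t-T_1)\Delta}u(T_1)\|_{Z'(T_1,T_1+s)}$ (which starts at $0$, not at the size of the data, and is controlled by combining the Duhamel bound from \eqref{UAlmostLin} with the interpolation $Z'\le Z^{3/4}X^{1/4}$), concludes that $h$ stays $\le\delta_0/2$ up to $s=T-T_1$, and then uses continuity of $h$ in $s$ to push past $T$ directly and invoke Proposition~\ref{LocTheory} with the interior datum $u(T_1)$. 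The paper's argument is slicker since it bypasses the endpoint limit entirely; yours is more standard and arguably more transparent. Two minor observations on your version: since $u\in X^1(I)$ is already part of the paper's definition of a strong solution and the restriction norm is monotone in the interval, the quantitative step~1 is not actually needed for step~2 (any finite bound on $\|u\|_{X^1(I)}$ suffices); and when you invoke uniqueness to glue the new local solution to $u$ past $b$, you should first note (via the same Cauchy estimate and the limiting form of the Duhamel identity) that $u$ extends to a strong solution on the half-closed interval $(b-\varepsilon,b]$ with datum $u_b$ at $b$, so that the uniqueness in Proposition~\ref{LocTheory} applies with matching data at a common time.
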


\begin{proof}[Proof of Lemma \ref{BlowUpCriterion}] It suffices to consider the case $I=(0,T)$.
Choose $\varepsilon>0$ sufficiently small and  $T_1\ge T-1$ such that
\begin{equation*}
\Vert u\Vert_{Z(T_1,T)}\le\varepsilon.
\end{equation*}
Now, let
\begin{equation*}
h(s)=\Vert e^{i(t-T_1)\Delta}u(T_1)\Vert_{Z^\prime(T_1,T_1+s)}.
\end{equation*}
Clearly, $h$ is a continuous function of $s$ satisfying $h(0)=0$. In addition, using Proposition \ref{LocTheory}, as long as $h(s)\le\delta_0$, we have that
\begin{equation*}
\Vert u(t)-e^{i(t-T_1)\Delta}u(T_1)\Vert_{X^1(T_1,T_1+s)}\le h(s)^\frac{5}{3}
\end{equation*}
and in particular, from Duhamel formula,
\begin{equation*}
\begin{split}
\Vert e^{i(t-T_1)\Delta}u(T_1)\Vert_{Z(T_1,T_1+s)}&\le \Vert u\Vert_{Z(T_1,T_1+s)}+C\Vert u(t)-e^{i(t-T_1)\Delta}u(T_1)\Vert_{X^1(T_1,T_1+s)}\\
&\le \varepsilon+Ch(s)^\frac{5}{3}.
\end{split}
\end{equation*}
On the other hand, by definition, there holds that
\begin{equation*}
\begin{split}
h(s)&\le \Vert e^{i(t-T_1)\Delta}u(T_1)\Vert_{Z(T_1,T_1+s)}^\frac{3}{4}\Vert e^{i(t-T_1)\Delta}u(T_1)\Vert_{X^1(T_1,T_1+s)}^\frac{1}{4}\\
&\le\left(\varepsilon+Ch(s)^\frac{5}{3}\right)^\frac{3}{4}\Vert u(T_1)\Vert_{H^1}^\frac{1}{4}.
\end{split}
\end{equation*}
Now, if we take $\varepsilon>0$ sufficiently small, we see by continuity that $h(s)\le C\sqrt{\varepsilon}E^\frac{1}{4}\leq\delta_0/2$ for all $s\le T-T_1$. Consequently, there exists a larger time $T_2>T$ such that
\begin{equation*}
\Vert e^{i(t-T_1)\Delta}u(T_1)\Vert_{Z^\prime(T_1,T_2)}\leq\frac{3}{4}\delta_0.
\end{equation*}
Applying Proposition \ref{LocTheory}, we see that $u$ can be extended to $(0,T_2)$.
\end{proof}

Finally, we conclude with a stability result.

\begin{proposition}[Stability]\label{Stabprop}
Assume $I$ is an open bounded interval, $\rho\in[-1,1]$, and $\widetilde{u}\in X^1(I)$ satisfies the approximate  Schr\"{o}dinger equation
\begin{equation}\label{ANLS}
(i\partial_t+\Delta)\widetilde{u}=\rho\widetilde{u}|\widetilde{u}|^2+e\quad\text{ on }\mathbb{R}\times\mathbb{T}^3\times I.
\end{equation}
Assume in addition that
\begin{equation}\label{ume}
\|\widetilde{u}\|_{Z(I)}+\|\widetilde{u}\|_{L^\infty_t(I,H^1(\mathbb{R}\times\mathbb{T}^3))}\leq M,
\end{equation}
for some $M\in[1,\infty)$. Assume $t_0 \in I$ and $u_0\in H^1(\mathbb{R}\times\mathbb{T}^3)$ is such that the smallness condition
\begin{equation}\label{safetycheck}
\|u_0 - \widetilde{u}(t_0)\|_{H^1(\mathbb{R}\times\mathbb{T}^3)}+\| e\|_{N(I)}\leq \eps
\end{equation}
holds for some $0 < \eps < \eps_1$, where $\eps_1\leq 1$ is a small constant $\eps_1 = \eps_1(M) > 0$.

Then there exists a strong solution $u\in X^1(I)$ of the Schr\"{o}dinger equation
\begin{equation}\label{ANLS2}
(i\partial_t+\Delta)u=\rho u|u|^2
\end{equation}
 such that $u(t_0)=u_0$ and
\begin{equation}\label{output}
\begin{split}
\| u \|_{X^1(I)}+\|\widetilde{u}\|_{X^1(I)}&\leq C(M),\\
\| u - \widetilde u \|_{X^1(I)}&\leq C(M)\eps.
 \end{split}
\end{equation}
\end{proposition}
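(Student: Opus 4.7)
The plan is a continuity/bootstrap argument on a suitable partition of $I$. The guiding idea is that since $\|\widetilde u\|_{Z(I)}\le M$, one can divide $I$ into finitely many subintervals on which $\widetilde u$ is small in the $Z'$-norm, and Lemma \ref{NLEst2} then absorbs the dangerous linear-in-$w$ contribution to the difference equation $(i\partial_t+\Delta)w=\rho(|u|^2u-|\widetilde u|^2\widetilde u)-e$, where $w:=u-\widetilde u$. The remaining nonlinear terms in $w$ are controlled by a standard bootstrap window, and the initial smallness $\|w(t_0)\|_{H^1}\le\varepsilon$ is propagated from one subinterval to the next.

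Concretely, fix a small parameter $\eta=\eta(M)$ and partition $I=\bigcup_{j=1}^J I_j$ with $|I_j|\le 1$ and $\|\widetilde u\|_{Z(I_j)}\le\eta$, so that $J\le J(M,\eta)$. As a preliminary step, one upgrades the $Z$-control of $\widetilde u$ into an $X^1$-control of size $C(M)$: applying the $X^1(I_j)$ norm to the Duhamel formula for $\widetilde u$ and invoking Lemma \ref{NLEst2} with the interpolation $\|\widetilde u\|_{Z'(I_j)}\le\eta^{3/4}\|\widetilde u\|_{X^1(I_j)}^{1/4}$ gives
\begin{equation*}
\|\widetilde u\|_{X^1(I_j)}\le\|\widetilde u(t_{j-1})\|_{H^1}+C\eta^{3/2}\|\widetilde u\|_{X^1(I_j)}^{3/2}+C\|e\|_{N(I_j)},
\end{equation*}
which, by continuity in the length of the interval, closes into $\|\widetilde u\|_{X^1(I_j)}\le 4M$ and hence $\|\widetilde u\|_{X^1(I)}\le C(M)$.

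For the difference $w$, expand $|u|^2u-|\widetilde u|^2\widetilde u$ as a sum of cubic terms in $\{w,\bar w,\widetilde u,\bar{\widetilde u}\}$ each containing at least one factor of $w$ or $\bar w$. Lemma \ref{NLEst2} bounds the term linear in $w$ by $C(M)\eta^{3/4}\|w\|_{X^1(I_j)}$ (using $\|w\|_{Z'}\lesssim\|w\|_{X^1}$), and the quadratic and cubic terms in $w$ by $C(M)\|w\|_{X^1(I_j)}^2+C\|w\|_{X^1(I_j)}^3$. Choosing $\eta=\eta(M)$ so that $C(M)\eta^{3/4}\le 1/4$ and restricting to a bootstrap window $\|w\|_{X^1(I_j)}\le A_0(M)$ yields
\begin{equation*}
\|w\|_{X^1(I_j)}\le 2\|w(t_{j-1})\|_{H^1}+2\|e\|_{N(I_j)}.
\end{equation*}
The existence of $u$ on each $I_j$ is provided by Proposition \ref{LocTheory}, whose smallness hypothesis $\|e^{i(t-t_{j-1})\Delta}u(t_{j-1})\|_{Z'(I_j)}\le\delta_0$ is verified by bounding $\|e^{i(t-t_{j-1})\Delta}(u(t_{j-1})-\widetilde u(t_{j-1}))\|_{Z'(I_j)}\lesssim\|w(t_{j-1})\|_{H^1}$ and, after rearranging the Duhamel formula for $\widetilde u$ to solve for $e^{i(t-t_{j-1})\Delta}\widetilde u(t_{j-1})$, estimating the result in $Z'(I_j)$ by $C(M)(\eta+\varepsilon)^{3/4}$.

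Iterating over $j$ with $a_j:=\|w(t_j)\|_{H^1}\lesssim\|w\|_{X^1(I_j)}$, the recursion $a_j\le 2a_{j-1}+C\|e\|_{N(I)}$ combined with $a_0\le\varepsilon$ gives $a_j\le C\cdot 2^j\varepsilon$; hence $\max_j\|w\|_{X^1(I_j)}\le C(M)\varepsilon$, and since any subinterval of $I$ of length $\le 1$ meets at most two $I_j$'s, one obtains $\|w\|_{X^1(I)}\le C(M)\varepsilon$, which gives \eqref{output}. The main subtlety is the choreography of nested parameters: $\eta(M)$ must be small enough to absorb the linear-in-$w$ contribution (and to close the preliminary $X^1$ bootstrap for $\widetilde u$), which fixes $J=J(M,\eta)$; then $\varepsilon_1(M)$ has to be taken exponentially small in $J$ so that $a_j$ never escapes the bootstrap window $A_0(M)$ during the $J$-fold iteration.
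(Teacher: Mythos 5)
Your proof is correct and follows essentially the same strategy as the paper's: partition $I$ by $Z$-smallness of $\widetilde u$, upgrade to $X^1$-control of $\widetilde u$, run a bootstrap on the difference $w=u-\widetilde u$ using Lemma~\ref{NLEst2} with the $Z'$-smallness of $\widetilde u$ to absorb the linear-in-$w$ term, and iterate over the subintervals with a geometric growth bound on $\|w(t_j)\|_{H^1}$. The only cosmetic difference is that the paper runs its continuity argument on the $Z'$-quantity $h(s)$ (and on $\|\omega\|_{Z'(J_k)}$ for the difference) rather than directly on $\|\cdot\|_{X^1}$, but the two bootstraps are equivalent since $Z'\lesssim X^1$; also, the final gluing remark (``any subinterval of $I$ of length $\le 1$ meets at most two $I_j$'s'') is unnecessary once one reduces, as the paper does, to $|I|\le 1$, after which the standard chaining property of $U^2/V^2$-based restriction norms over consecutive intervals suffices.
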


\begin{proof}[Proof of Proposition \ref{Stabprop}] Without loss of generality, we may assume that $\vert I\vert\le 1$. We proceed in several steps.

First we need the following variation on the local existence result. We claim that
there exists $\delta_1=\delta_1(M)$ such that if, for some interval $J\ni t_0$,
\begin{equation*}
\begin{split}
\Vert e^{i(t-t_0)\Delta}\tilde{u}(t_0)\Vert_{Z^\prime(J)}+\Vert e\Vert_{N(J)}&\le\delta_1,\\
\end{split}
\end{equation*}
then there exists a unique solution $\tilde{u}$ of \eqref{ANLS} on $J$ and
\begin{equation*}
\Vert \tilde{u}-e^{i(t-t_0)\Delta}\tilde{u}(t_0)\Vert_{X^1(J)}\le \Vert e^{i(t-t_0)\Delta}\tilde{u}(t_0)\Vert_{Z^\prime(J)}^\frac{5}{3}+2\Vert e\Vert_{N(J)}.
\end{equation*}
The proof is very similar to the proof of Proposition \ref{LocTheory} and is omitted.

Now we claim that there exists $\varepsilon_1=\varepsilon_1(M)$ such that if the inequalities
\begin{equation}\label{AssumpZ1}
\begin{split}
\Vert e\Vert_{N(I_k)}&\le\varepsilon_1\\
\Vert \tilde{u}\Vert_{Z(I_k)}&\le\varepsilon\le\varepsilon_1,
\end{split}
\end{equation}
hold on $I_k=(T_k,T_{k+1})$, then
\begin{equation}\label{SmallZnorm}
\begin{split}
\Vert e^{i(t-T_k)\Delta}\tilde{u}(T_k)\Vert_{Z^\prime(I_k)}&\le C(1+M)\left(\varepsilon+\Vert e\Vert_{N(I_k)}\right)^\frac{3}{4},\\
\Vert \tilde{u}\Vert_{Z^\prime(I_k)}&\le C(1+M)\left(\varepsilon+\Vert e\Vert_{N(I_k)}\right)^\frac{3}{4}.
\end{split}
\end{equation}

Define $h(s)=\Vert e^{i(t-T_k)\Delta}\tilde{u}(T_k)\Vert_{Z^\prime(T_k,T_k+s)}$. Let $J_k=[T_k,T^\prime)\subset I_k$ be the largest interval such that
$h(s)\le\delta_1/2$, where $\delta_1=\delta_1(M)$ is as in the claim above. Then, on the one hand, we see from Duhamel Formula that
\begin{equation*}
\begin{split}
\Vert e^{i(t-T_k)\Delta}\tilde{u}(T_k)\Vert_{Z(T_k,T_k+s)}&\le \Vert \tilde{u}\Vert_{Z(T_k,T_k+s)}+\Vert \tilde{u}-e^{i(t-T_k)\Delta}\tilde{u}(T_k)\Vert_{X^1(T_k,T_k+s)}\\
&\le \varepsilon+h(s)^\frac{5}{3}+2\Vert e\Vert_{N(I_k)}.
\end{split}
\end{equation*}
On the other hand, we also have that
\begin{equation*}
\begin{split}
h(s)&\le \Vert e^{i(t-T_k)\Delta}\tilde{u}(T_k)\Vert_{Z(T_k,T_k+s)}^\frac{3}{4}\Vert e^{i(t-T_k)\Delta}\tilde{u}(T_k)\Vert_{X^1(T_k,T_k+s)}^\frac{1}{4}\\
&\le \left(\varepsilon+h(s)^\frac{5}{3}+2\Vert e\Vert_{N(I_k)}\right)^\frac{3}{4}M^\frac{1}{4}\\
&\le C(1+M)\left(\varepsilon+\Vert e\Vert_{N(I_k)}\right)^\frac{3}{4}+C(1+M)h(s)^{\frac{5}{4}}.
\end{split}
\end{equation*}
The claim \eqref{SmallZnorm} follows provided that $\varepsilon_1$ is chosen small enough.

Now we consider an interval $I_k=(T_k,T_{k+1})$ on which we assume that
\begin{equation}\label{AssumptSmallness}
\begin{split}
\Vert e^{i(t-T_k)\Delta}\tilde{u}(T_k)\Vert_{Z^\prime(I_k)}&\le\varepsilon\le\varepsilon_0\\
\Vert \tilde{u}\Vert_{Z^\prime(I_k)}&\le \varepsilon\le \varepsilon_0\\
\Vert e\Vert_{N(I_k)}&\le \varepsilon_0
\end{split}
\end{equation}
for some constant $\varepsilon_0$ sufficiently small. We can control $\tilde{u}$ on $I_k$ as follows
\begin{equation*}
\begin{split}
\Vert \tilde{u}\Vert_{X^1(I_k)}&\le \Vert e^{i(t-T_k)\Delta}\tilde{u}(T_k)\Vert_{X^1(I_k)}+\Vert \tilde{u}-e^{i(t-T_k)\Delta}\tilde{u}(T_k)\Vert_{X^1(I_k)}\le M+1.
\end{split}
\end{equation*}
Now, we let $u$ be an exact strong solution of \eqref{ANLS2} defined on an interval $J_u$ such that
\begin{equation}\label{DefOfAk}
a_k=\Vert \tilde{u}(T_k)-u(T_k)\Vert_{H^1(\mathbb{R}\times\mathbb{T}^3)}\le\varepsilon_0
\end{equation}
and we let $J_k=[T_k,T_k+s]\cap I_k\cap J_u$ be the maximal interval such that
\begin{equation}\label{AssumptionStabAdded1}
\Vert \omega\Vert_{Z^\prime(J_k)}\le 10C\varepsilon_0\le 1/(10(M+1)),
\end{equation}
where $\omega:=u-\widetilde{u}$. Such an interval exists and is nonempty since $s\mapsto\Vert \omega\Vert_{Z^\prime(T_k,T_k+s)}$ is finite and continuous on $J_u$ and vanishes for $s=0$. Then, we see that $\omega=u-\tilde{u}$ solves
\begin{equation*}
\left(i\partial_t+\Delta\right)\omega=\rho\left((\tilde{u}+\omega)\vert \tilde{u}+\omega\vert^2-\tilde{u}\vert\tilde{u}\vert^2\right)-e
\end{equation*}
and consequently, using Proposition \ref{NLEst2}, we get that
\begin{equation}\label{estimJk}
\begin{split}
\Vert \omega\Vert_{X^1(J_k)}&\le \Vert e^{i(t-T_k)\Delta}(u(T_k)-\tilde{u}(T_k))\Vert_{X^1(J_k)}+\Vert (\tilde{u}+\omega)\vert \tilde{u}+\omega\vert^2-\tilde{u}\vert\tilde{u}\vert^2\Vert_{N(J_k)}+\Vert e\Vert_{N(J_k)}\\
&\le \Vert u(T_k)-\tilde{u}(T_k)\Vert_{H^1}+C\big(\Vert \tilde{u}\Vert_{X^1(J_k)}\Vert \tilde{u}\Vert_{Z^\prime(J_k)}\Vert \omega\Vert_{X^1(J_k)}\\
&+\Vert \tilde{u}\Vert_{X^1(J_k)}\Vert \omega\Vert_{X^1(J_k)}\Vert\omega\Vert_{Z^\prime(J_k)}+\Vert \omega\Vert_{X^1(J_k)}\Vert \omega\Vert_{Z^\prime(J_k)}^2+\Vert e\Vert_{N(J_k)}\big)\\
&\le \Vert u(T_k)-\tilde{u}(T_k)\Vert_{H^1}+C\big(\varepsilon_0\Vert \omega\Vert_{X^1(J_k)}+\Vert e\Vert_{N(I_k)}\big),
\end{split}
\end{equation}
where the last line holds due to \eqref{AssumptionStabAdded1}. Consequently,
\begin{equation*}
\Vert \omega\Vert_{Z^\prime(J_k)}\le C\Vert \omega\Vert_{X^1(J_k)}\le 4C\left(\Vert u(T_k)-\tilde{u}(T_k)\Vert_{H^1}+\Vert e\Vert_{N(I_k)}\right)\le 8C\varepsilon_0
\end{equation*}
provided that $\varepsilon_0$ is small enough. Consequently $J_k=I_k\cap J_u$ and \eqref{estimJk} holds on $I_k\cap J_u$. Therefore, it follows from Lemma \ref{BlowUpCriterion} that the solution $u$ can be extended to the entire interval $I_k$, and the bounds \eqref{AssumptionStabAdded1} and \eqref{estimJk} hold with $J_k=I_k$.
\medskip

Now we can finish the proof. We take $\varepsilon_2(M)<\varepsilon_1(M)$ sufficiently small and split $I$ into $N=O(\Vert \tilde{u}\Vert_{Z(I)}/\varepsilon_2)^4$ intervals such that
\begin{equation*}
\Vert \tilde{u}\Vert_{Z(I_k)}\le \varepsilon_2,\qquad \Vert e\Vert_{N(I_k)}\le\kappa\varepsilon_2.
\end{equation*}
Then, on each interval, we have that \eqref{AssumpZ1} hold, so that we also have from \eqref{SmallZnorm} that \eqref{AssumptSmallness} holds. As a consequence, the bounds \eqref{AssumptionStabAdded1} and \eqref{estimJk} both hold on each interval $I_k$. The conclusion of the proposition follows.
\end{proof}

\section{Euclidean approximations}\label{Eucl}

In this section we prove precise estimates showing how to compare Euclidean and semiperiodic solutions of both linear and nonlinear Schr\"{o}dinger equations. Such a comparison is meaningful only in the case of rescaled data that concentrate at a point. We follow closely the arguments in \cite[Section 4]{IoPaSt}, but provide all the details in our case for the sake of completeness.

We fix a spherically-symmetric function $\eta\in C^\infty_0(\mathbb{R}^4)$ supported in the ball of radius $2$ and equal to $1$ in the ball of radius $1$. Given $\phi\in \dot{H}^1(\mathbb{R}^4)$ and a real number $N\geq 1$ we define
\begin{equation}\label{rescaled}
\begin{split}
Q_N\phi\in H^1(\mathbb{R}^4),\qquad &(Q_N\phi)(x)=\eta(x/N^{1/2})\phi(x),\\
\phi_N\in H^1(\mathbb{R}^4),\qquad &\phi_N(x)=N(Q_N\phi)(Nx),\\
f_{N}\in H^1(\R\times\T^3),\qquad &f_{N}(y)=\phi_N(\Psi^{-1}(y)),
\end{split}
\end{equation}
where $\Psi:\{x\in\mathbb{R}^4:|x|<1\}\to O_0\subseteq \R\times\T^3$, $\Psi(x)=x$. Thus $Q_N\phi$ is a compactly supported\footnote{This modification is useful to avoid the contribution of $\phi$ coming from the Euclidean infinity, in a uniform way depending on the scale $N$.} modification of the profile $\phi$, $\phi_N$ is an $\dot{H}^1$-invariant rescaling of $Q_N\phi$, and $f_{N}$ is the function obtained by transferring $\phi_N$ to a neighborhood of $0$ in $\R\times\T^3$. We define also
\begin{equation*}
E_{\mathbb{R}^4}(\phi)=\frac{1}{2}\int_{\mathbb{R}^4}|\nabla_{\R^4}\phi|^2\,dx+\frac{1}{4}\int_{\mathbb{R}^4}|\phi|^4\,dx.
\end{equation*}

We will use the main theorem of \cite{RV}, in the following form.

\begin{theorem}\label{MainThmEucl}
Assume $\psi\in\dot{H}^1(\mathbb{R}^4)$. Then there is a unique global solution $v\in C(\mathbb{R}:\dot{H}^1(\mathbb{R}^4))$ of the initial-value problem
\begin{equation}\label{clo3}
(i\partial_t+\Delta_{\R^4})v=v|v|^2,\qquad v(0)=\psi,
\end{equation}
and
\begin{equation}\label{clo4}
\|\,|\nabla_{\R^4} v|\,\|_{(L^\infty_tL^2_x\cap L^2_tL^4_x)(\mathbb{R}^4\times\mathbb{R})}\leq \widetilde{C}(E_{\mathbb{R}^4}(\psi)).
\end{equation}
Moreover this solution scatters in the sense that there exists $\psi^{\pm\infty}\in\dot{H}^1(\mathbb{R}^4)$ such that
\begin{equation}\label{EScat}
\Vert v(t)-e^{it\Delta}\psi^{\pm\infty}\Vert_{\dot{H}^1(\mathbb{R}^4)}\to 0
\end{equation}
as $t\to\pm\infty$. Besides, if $\psi\in H^5(\mathbb{R}^4)$ then $v\in C(\mathbb{R}:H^5(\mathbb{R}^4))$ and
\begin{equation*}
\sup_{t\in\mathbb{R}}\|v(t)\|_{H^5(\mathbb{R}^3)}\lesssim_{\|\psi\|_{H^5(\mathbb{R}^4)}}1.
\end{equation*}
\end{theorem}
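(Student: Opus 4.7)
The first two parts of the theorem---global well-posedness in $\dot{H}^1(\mathbb{R}^4)$ together with the Strichartz-type bound \eqref{clo4} and the scattering statement \eqref{EScat}---are essentially the content of the main theorem of Ryckman--Visan \cite{RV}. Their result produces a global solution with a finite scale-invariant spacetime norm depending only on the energy, from which \eqref{clo4} is recovered by a standard Strichartz bootstrap on each interval of a partition of $\mathbb{R}$ where that spacetime norm is small. The scattering statement \eqref{EScat} then follows by the usual Duhamel--Cauchy argument: the finiteness of $\||\nabla v|\|_{L^2_tL^4_x(\mathbb{R}^4\times\mathbb{R})}$ combined with energy conservation bounds $\|e^{-it_1\Delta}v(t_1)-e^{-it_2\Delta}v(t_2)\|_{\dot{H}^1}$ by $\|\nabla(v|v|^2)\|_{L^2_tL^{4/3}_x(\mathbb{R}^4\times[t_1,t_2])}$, which tends to $0$ as $t_1,t_2\to +\infty$ (and similarly at $-\infty$).

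The substantive remaining task is the $H^5$-persistence statement. I would use \eqref{clo4} to partition $\mathbb{R}$ into finitely many intervals $I_1,\dots,I_J$, with $J=J(E_{\mathbb{R}^4}(\psi))$, on each of which
\begin{equation*}
\|\nabla v\|_{L^4_tL^{8/3}_x(\mathbb{R}^4\times I_j)}\le \eta
\end{equation*}
for a small absolute constant $\eta$; this is possible because $(4,8/3)$ is an admissible Strichartz pair in four dimensions, and the global norm is controlled by \eqref{clo4} via interpolation between $L^\infty_tL^2_x$ and $L^2_tL^4_x$. On $I_j$ with left endpoint $t_j$, apply $\langle\nabla\rangle^5$ to the Duhamel formula and bound the cubic nonlinearity via the Kato--Ponce fractional Leibniz rule:
\begin{equation*}
\|\langle\nabla\rangle^5(v|v|^2)\|_{L^2_tL^{4/3}_x(I_j)}\lesssim \|\langle\nabla\rangle^5 v\|_{L^\infty_tL^2_x(I_j)}\|v\|_{L^4_tL^8_x(I_j)}^2.
\end{equation*}
Using the Sobolev embedding $\dot{W}^{1,8/3}(\mathbb{R}^4)\hookrightarrow L^8(\mathbb{R}^4)$, the last factor is controlled by $\|\nabla v\|_{L^4_tL^{8/3}_x(I_j)}^2\le \eta^2$. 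A Strichartz estimate applied to the Duhamel formula then yields
\begin{equation*}
\|v\|_{S^5(I_j)}\lesssim \|v(t_j)\|_{H^5}+\eta^2\|v\|_{S^5(I_j)},
\end{equation*}
where $S^5$ denotes the Strichartz space at regularity $5$. Choosing $\eta$ small enough to absorb the last term gives $\|v\|_{L^\infty_t(I_j;H^5)}\le 2\|v(t_j)\|_{H^5}$, and iterating across the $J$ intervals delivers the bound $\sup_{t\in\mathbb{R}}\|v(t)\|_{H^5}\lesssim_{\|\psi\|_{H^5}} 1$.

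The only subtle point is the borderline Sobolev embedding $\dot{W}^{1,4}(\mathbb{R}^4)\not\hookrightarrow L^\infty$, which is the reason for using the auxiliary admissible pair $(4,8/3)$ rather than $(2,4)$ when distributing the three factors of the cubic nonlinearity. Since the equation is a fixed cubic polynomial in $v$ and $\bar v$, no additional harmonic-analytic input is needed beyond the standard Strichartz theory on $\mathbb{R}^4$, and I do not expect further obstacles.
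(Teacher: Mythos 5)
The paper does not actually prove Theorem~\ref{MainThmEucl}: it is stated as a repackaging of the main theorem of Ryckman--Visan \cite{RV}, which is used as a black box, and the $H^5$-persistence statement is left implicit as a standard consequence. Your proposal is therefore not in competition with a proof in the paper, but it correctly reconstructs the argument one would need to justify the statement as written.

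Checking the details: the exponents are all consistent. The pair $(4,8/3)$ is Schr\"odinger-admissible in $\R^4$ (since $\frac{2}{4}+\frac{4}{8/3}=2$), and it is obtained by interpolating $L^\infty_tL^2_x$ and $L^2_tL^4_x$ with $\theta=1/2$, so finitely many intervals with $\|\nabla v\|_{L^4_tL^{8/3}_x}\le\eta$ indeed exist with $J=J(E_{\R^4}(\psi))$. The H\"older splitting $L^{4/3}_x \leftarrow L^2_x\cdot L^8_x\cdot L^8_x$ and $L^2_t\leftarrow L^\infty_t\cdot L^4_t\cdot L^4_t$ is exact, the Sobolev embedding $\dot{W}^{1,8/3}(\R^4)\hookrightarrow L^8(\R^4)$ is the correct critical embedding, and $(2,4/3)$ is the dual of an admissible pair so the Duhamel Strichartz estimate closes. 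The trilinear Kato--Ponce step is obtained by applying the bilinear estimate twice, and since the nonlinearity is a fixed polynomial in $v,\bar v$, every term puts the $\langle\nabla\rangle^5$ on one of the three factors, giving exactly the bound you wrote. Iterating across the $J$ intervals yields a constant depending only on $\|\psi\|_{H^5}$, matching the statement. Your remark about avoiding $\dot{W}^{1,4}(\R^4)\not\hookrightarrow L^\infty$ is the right explanation for why the auxiliary pair $(4,8/3)$ rather than $(2,4)$ must carry the low-derivative factors. In short, the proposal is a correct and complete fleshing-out of what the paper takes for granted.
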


Our first result in this section is the following lemma:

\begin{lemma}\label{step1}
Assume $\phi\in\dot{H}^1(\mathbb{R}^4)$, $T_0\in(0,\infty)$, and $\rho\in\{0,1\}$ are given, and define $f_{N}$ as in \eqref{rescaled}. Then the following conclusions hold:

(i) There is $N_0=N_0(\phi,T_0)$ sufficiently large such that for any $N\geq N_0$ there is a unique solution $U_{N}\in C((-T_0N^{-2},T_0N^{-2}):H^1(\R\times\T^3))$ of the initial-value problem
\begin{equation}\label{clo5}
(i\partial_t+\Delta)U_N=\rho U_N|U_N|^2,\qquad U_N(0)=f_N.
\end{equation}
Moreover, for any $N\geq N_0$,
\begin{equation}\label{clo6}
\|U_N\|_{X^1(-T_0N^{-2},T_0N^{-2})}\lesssim_{E_{\mathbb{R}^4}(\phi)}1.
\end{equation}

(ii) Assume $\varepsilon_1\in(0,1]$ is sufficiently small (depending only on $E_{\mathbb{R}^4}(\phi)$), $\phi'\in H^5(\mathbb{R}^4)$, and $\|\phi-\phi'\|_{\dot{H}^1(\mathbb{R}^4)}\leq\varepsilon_1$. Let $v'\in C(\mathbb{R}:H^5)$ denote the solution of the initial-value problem
\begin{equation*}
(i\partial_t+\Delta_{\R^4})v'=\rho v'|v'|^2,\qquad v'(0)=\phi'.
\end{equation*}
For $R,N\geq 1$ we define
\begin{equation}\label{clo9}
\begin{split}
&v'_R(x,t)=\eta(x/R)v'(x,t),\qquad\,\,\qquad (x,t)\in\mathbb{R}^4\times(-T_0,T_0),\\
&v'_{R,N}(x,t)=Nv'_R(Nx,N^2t),\qquad\quad\,(x,t)\in\mathbb{R}^4\times(-T_0N^{-2},T_0N^{-2}),\\
&V_{R,N}(y,t)=v'_{R,N}(\Psi^{-1}(y),t)\qquad\quad\,\, (y,t)\in\R\times\T^3\times(-T_0N^{-2},T_0N^{-2}).
\end{split}
\end{equation}
Then there is $R_0\geq 1$ (depending on $T_0$ and $\phi'$ and $\varepsilon_1$) such that, for any $R\geq R_0$,
\begin{equation}\label{clo18}
\limsup_{N\to\infty}\|U_N-V_{R,N}\|_{X^1(-T_0N^{-2},T_0N^{-2})}\lesssim_{E_{\mathbb{R}^4}(\phi)}\varepsilon_1.
\end{equation}
\end{lemma}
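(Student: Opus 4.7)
The plan is to establish parts (i) and (ii) simultaneously by constructing an explicit approximating solution $V_{R,N}$ from the smooth Euclidean profile $\phi'$, verifying that it satisfies $(i\partial_t + \Delta)V_{R,N} = \rho V_{R,N}|V_{R,N}|^2 + e_{R,N}$ with a small error, and then invoking the stability Proposition \ref{Stabprop}, which will produce the exact solution $U_N$ on $I_N := (-T_0N^{-2}, T_0N^{-2})$ together with the bound \eqref{clo6} and the approximation \eqref{clo18}. The geometric input is that $\Psi$ is a local isometry on $B(0,1) \subset \R^4$, so that for $N \ge 2R$ the support of $V_{R,N}(\cdot,t)$ lies inside the chart image $O_0$ and the Laplacian on $\R\times\T^3$ pulls back to $\Delta_{\R^4}$ there. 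Theorem \ref{MainThmEucl} together with $H^5$-persistence gives a global solution $v' \in C(\R:H^5(\R^4))$ with uniformly bounded norms on $[-T_0,T_0]$, and scattering ensures $\sup_{|t|\le T_0}\|v'(t)\|_{H^s(|x| \ge R)}\to 0$ as $R \to \infty$ for each $s \ge 0$.

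\emph{Error and initial-data estimates.} A direct computation in the chart shows that $e_{R,N}$ corresponds, via $x = N\Psi^{-1}(y)$, $s = N^2 t$ and the appropriate multiplicative rescaling, to the Euclidean error
\[
e_R = 2R^{-1}(\nabla\eta)(x/R)\cdot\nabla v' + R^{-2}(\Delta\eta)(x/R)v' - \rho\,\eta(x/R)\bigl(1-\eta(x/R)^2\bigr)v'|v'|^2,
\]
supported in $\{R\le|x|\le 2R\}$. Combined with the spatial decay of $v'$, this gives $\|e_R\|_{L^1_tH^1_x(\R^4\times(-T_0,T_0))}\to 0$ as $R\to\infty$; the parabolic rescaling and the embedding $\|\cdot\|_{N(I_N)}\lesssim\|\cdot\|_{L^1_tH^1_x(I_N)}$ then yield $\|e_{R,N}\|_{N(I_N)}\to 0$ uniformly in $N$ large. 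For the initial data, writing in the chart (valid for $N^{1/2}\gg R$)
\[
V_{R,N}(0,y) - f_N(y)\ =\ N\Big[\eta(x/R)(\phi'-\phi)(x) + \bigl(\eta(x/R) - \eta(x/N^{1/2})\bigr)\phi(x)\Big]\Big|_{x=N\Psi^{-1}(y)},
\]
the $\dot H^1(\R\times\T^3)$ norm transfers to the Euclidean $\dot H^1(\R^4)$ norm and is bounded by $\|\phi - \phi'\|_{\dot H^1} + o_R(1) + o_N(1) \lesssim \varepsilon_1 + o_R(1) + o_N(1)$ via standard cutoff lemmas, while the $L^2$ contributions on $\R\times\T^3$ vanish like $N^{-1/2}$ by concentration at spatial scale $N^{-1}$.

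\emph{Uniform $X^1$ bound and conclusion.} The main obstacle is proving the uniform bound $\|V_{R,N}\|_{X^1(I_N)}\lesssim_{E_{\R^4}(\phi)} 1$, which is needed so that Proposition \ref{Stabprop} applies with $M$ independent of $N$. Arguing analogously to \cite[Section 4]{IoPaSt}, the scaling identity $V_{R,N}(y,t) = Nv'_R(N\Psi^{-1}(y), N^2 t)$, the concentration at frequency $\sim N$ and spatial scale $N^{-1}$, together with the Euclidean bound $v' \in L^4_{t,x}(\R^4\times(-T_0,T_0))$ coming from scattering, transfer to $\|V_{R,N}\|_{Z(I_N)}\lesssim_{E_{\R^4}(\phi)} 1$. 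Partitioning $I_N$ into $O_{E_{\R^4}(\phi)}(1)$ consecutive subintervals $J = (a,b)$ on which $\|V_{R,N}\|_{Z(J)}$ is small, the Duhamel identity combined with Lemma \ref{NLEst2} produces
\[
\|V_{R,N}\|_{X^1(J)}\lesssim \|V_{R,N}(a)\|_{H^1} + \|V_{R,N}\|_{X^1(J)}^{3/2}\|V_{R,N}\|_{Z(J)}^{3/2} + \|e_{R,N}\|_{N(J)},
\]
which closes by a standard continuity argument, yielding the desired uniform bound. Finally, Proposition \ref{Stabprop} applied with $\widetilde u = V_{R,N}$, initial time $t_0 = 0$, $\varepsilon \lesssim \varepsilon_1 + o_R(1) + o_N(1)$, and $M = M(E_{\R^4}(\phi))$ yields, for $R\ge R_0(T_0,\phi',\varepsilon_1)$ and then $N\ge N_0(R,\phi',T_0)$, the existence of $U_N \in X^1(I_N)$ satisfying \eqref{clo6} and the approximation \eqref{clo18}.
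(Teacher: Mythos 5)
Your proposal follows the paper's strategy: build the approximate solution $V_{R,N}$ from the truncated, rescaled Euclidean flow, estimate the error $e_{R,N}$ in the dual norm, verify the closeness of the initial data, and finish with Proposition~\ref{Stabprop}. The computation of $e_R$, its vanishing in $L^1_tH^1_x$ as $R\to\infty$, and the initial-data estimate all match the paper's argument.

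One small inefficiency: you state that the ``main obstacle'' is establishing a uniform $X^1$ bound on $V_{R,N}$ before invoking Proposition~\ref{Stabprop}, and you spend a paragraph re-deriving it by partitioning and a Duhamel iteration. This is not needed: the hypothesis \eqref{ume} of Proposition~\ref{Stabprop} only asks for uniform bounds on $\|\widetilde u\|_{Z(I)}$ and $\|\widetilde u\|_{L^\infty_t(I,H^1)}$, and the $X^1$ bound on $\widetilde u$ (as well as on $u$) appears in the \emph{conclusion} \eqref{output}. The paper therefore only verifies the $L^\infty_tH^1$ bound \eqref{clo16} and the $Z$-norm bound \eqref{clo16.5} for $V_{R_0,N}$; for the latter it interpolates the Strichartz bounds $\nabla v'\in L^\infty_tL^2_x\cap L^2_tL^4_x$ from Theorem~\ref{MainThmEucl} to control $\|\nabla v'_{R,N}\|_{L^4_tL^{8/3}_x}$, which is slightly cleaner than your appeal to $v'\in L^4_{t,x}$ combined with frequency-localization heuristics. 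Your extra step is harmless (it essentially reproves part of Proposition~\ref{Stabprop}'s proof) but can be deleted.
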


\begin{proof}[Proof of Lemma \ref{step1}] All of the constants in this proof are allowed to depend on $E_{\mathbb{R}^4}(\phi)$ (in fact on the large constant $\widetilde{C}(E_{\mathbb{R}^4}(\phi))$ in \eqref{clo4}); for simplicity of notation we will not track this dependence explicitly. Using Theorem \ref{MainThmEucl}
\begin{equation}\label{clo7}
\begin{split}
&\|\nabla_{\R^4} v'\|_{(L^\infty_tL^2_x\cap L^2_tL^4_x)(\mathbb{R}^4\times\mathbb{R})}\lesssim 1,\\
&\sup_{t\in\mathbb{R}}\|v'(t)\|_{H^5(\mathbb{R}^4)}\lesssim_{\|\phi'\|_{H^5(\mathbb{R}^4)}}1.
\end{split}
\end{equation}
We will prove that for any $R_0$ sufficiently large there is $N_0$ such that $V_{R_0,N}$ is an almost-solution of the equation \eqref{clo5}, for any $N\geq N_0$. We will then apply Proposition \ref{Stabprop} to upgrade this to an exact solution of the initial-value problem \eqref{clo5} and prove the lemma.

Let
\begin{equation*}
\begin{split}
e_R(x,t):&=[(i\partial_t+\Delta_{\R^4})v'_R-\rho v'_R|v'_R|^2](x,t)=\rho(\eta(x/R)-\eta(x/R)^3)v'(x,t)|v'(x,t)|^2\\
&+R^{-2}v'(x,t)(\Delta_{\R^4}\eta)(x/R)+2R^{-1}\sum_{j=1}^4\partial_jv'(x,t)\partial_j\eta(x/R).
\end{split}
\end{equation*}
Since $|v'(x,t)|\lesssim_{\|\phi'\|_{H^5(\mathbb{R}^4)}}1$, see \eqref{clo7}, it follows that
\begin{equation*}
\begin{split}
|e_R(x,t)|&+\sum_{k=1}^4|\partial_ke_R(x,t)|\\
&\lesssim_{\|\phi'\|_{H^5(\mathbb{R}^4)}}\mathbf{1}_{[R,2R]}(|x|)\cdot\big[|v'(x,t)|+\sum_{k=1}^3|\partial_kv'(x,t)|+\sum_{k,j=1}^3|\partial_k\partial_jv'(x,t)|\big].
\end{split}
\end{equation*}
Therefore
\begin{equation}\label{clo10}
\lim_{R\to\infty}\|\,|e_R|+|\nabla_{\R^4} e_R|\,\|_{L^2_tL^2_x(\mathbb{R}^4\times(-T_0,T_0))}=0.
\end{equation}
Letting
\begin{equation*}
e_{R,N}(x,t):=[(i\partial_t+\Delta_{\R^4})v'_{R,N}-\rho v'_{R,N}|v'_{R,N}|^2](x,t)=N^3e_R(Nx,N^2t),
\end{equation*}
it follows from \eqref{clo10} that there is $R_0\geq 1$ such that, for any $R\geq R_0$ and $N\geq 1$,
\begin{equation}\label{clo11}
\|\,|e_{R,N}|+|\nabla_{\R^4} e_{R,N}|\,\|_{L^1_tL^2_x(\mathbb{R}^4\times(-T_0N^{-2},T_0N^{-2}))}\leq\varepsilon_1.
\end{equation}

With $V_{R,N}(y,t)=v'_{R,N}(\Psi^{-1}(y),t)$ as in \eqref{clo9} and $N\geq 10R$, let
\begin{equation}\label{clo13}
E_{R,N}(y,t):=[(i\partial_t+\Delta)V_{R,N}-\rho V_{R,N}|V_{R,N}|^2](y,t)=e_{R,N}(\Psi^{-1}(y),t).
\end{equation}
Using \eqref{clo11}, it follows that for any $R_0$ sufficiently large there is $N_0$ such that for any $N\geq N_0$
\begin{equation}\label{clo15}
\|\,|\nabla^1 E_{R_0,N}|\,\|_{L^1_tL^2_x(\R\times\T^3\times(-T_0N^{-2},T_0N^{-2}))}\leq 2\varepsilon_1.
\end{equation}

To verify the hypothesis \eqref{ume} of Proposition \ref{Stabprop}, we estimate for $N$ large enough, using \eqref{clo7}
\begin{equation}\label{clo16}
\sup_{t\in(-T_0N^{-2},T_0N^{-2})}\|V_{R_0,N}(t)\|_{H^1(\R\times\T^3)}\leq \sup_{t\in(-T_0N^{-2},T_0N^{-2})}\|v'_{R_0,N}(t)\|_{H^1(\mathbb{R}^4)}\lesssim 1.
\end{equation}
In addition, using Littlewood--Paley theory and \eqref{clo7}, for $N$ large enough
\begin{equation}\label{clo16.5}
\begin{split}
\|V_{R_0,N}\|_{Z(-T_0N^{-2},T_0N^{-2})}&\lesssim\|(1-\Delta)^{1/4}V_{R_0,N}\|_{L^4(\R\times\T^3\times(-T_0N^{-2},T_0N^{-2}))}\\
&\lesssim \|(1-\Delta)^{1/2}V_{R_0,N}\|_{L^4_tL^{8/3}_x(\R\times\T^3\times(-T_0N^{-2},T_0N^{-2}))}\\
&\lesssim \|\,|v'_{R_0,N}|+|\nabla_{\R^4} v'_{R_0,N}|\,\|_{L^4_tL^{8/3}_x(\R^4\times(-T_0N^{-2},T_0N^{-2}))}\\
&\lesssim 1.
\end{split}
\end{equation}

Finally, to verify the inequality on the first term in \eqref{safetycheck} we estimate, for $R_0,N$ large enough,
\begin{equation}\label{clo17}
\begin{split}
\|f_N-V_{R_0,N}(0)&\|_{H^1(\R\times\mathbb{T}^3)}\lesssim \|\phi_N-v'_{R_0,N}(0)\|_{\dot{H}^1(\mathbb{R}^4)}\lesssim\|Q_N\phi-v'_{R_0}(0)\|_{\dot{H}^1(\mathbb{R}^4)}\\
&\lesssim \|Q_N\phi-\phi\|_{\dot{H}^1(\mathbb{R}^4)}+\|\phi-\phi'\|_{\dot{H}^1(\mathbb{R}^4)}+\|\phi'-v'_{R_0}(0)\|_{\dot{H}^1(\R^4)}\lesssim \varepsilon_1.
\end{split}
\end{equation}
The conclusion of the lemma follows from Proposition \ref{Stabprop}, provided that $\varepsilon_1$ is fixed sufficiently small depending on $E_{\mathbb{R}^4}(\phi)$.
\end{proof}

As a consequence, we have the following key lemma:

\begin{lemma}\label{step2}
Assume $\psi\in \dot{H}^1(\mathbb{R}^4)$, $\varepsilon>0$, $I\subseteq \mathbb{R}$ is an interval, and
\begin{equation}\label{clo20}
\|\,|\nabla_{\R^4}(e^{it\Delta}\psi)|\,\|_{L^{4}_tL^{8/3}_x(\mathbb{R}^4\times I)}\leq\varepsilon.
\end{equation}
For $N\geq 1$ we define, as before,
\begin{equation*}
(Q_N\psi)(x)=\eta(x/N^{1/2})\psi(x),\,\,\psi_N(x)=N(Q_N\psi)(Nx),\,\,\widetilde{\psi}_N(y)=\psi_N(\Psi^{-1}(y)).
\end{equation*}
Then there is $N_1=N_1(\psi,\varepsilon)$ such that, for any $N\geq N_1$,
\begin{equation}\label{clo21}
\|e^{it\Delta}\widetilde{\psi}_N\|_{Z(N^{-2}I)}\lesssim\varepsilon.
\end{equation}
\end{lemma}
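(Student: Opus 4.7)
The strategy is to bound the semiperiodic $Z$-norm by a Sobolev-type $L^4_tL^{8/3}_x$ quantity, compare that quantity to its Euclidean analogue via a periodization argument, and then invoke the hypothesis \eqref{clo20} after rescaling and a density reduction.

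\emph{Step 1 (Sobolev upgrade).} Setting $V:=e^{it\Delta}\widetilde{\psi}_N$, I would mimic the computation leading to \eqref{clo16.5}. Applying $L^4_x$ Littlewood--Paley square-function bounds together with $\ell^2\hookrightarrow\ell^4$ yields the pointwise-in-$t$ estimate
\begin{equation*}
\Big(\sum_M M^2\|P_MV(t)\|_{L^4_x}^4\Big)^{1/4}\lesssim \|(1-\Delta)^{1/4}V(t)\|_{L^4_x},
\end{equation*}
and the four-dimensional Sobolev inequality $\|f\|_{L^4_x}\lesssim \|(1-\Delta)^{1/4}f\|_{L^{8/3}_x}$ then gives
\begin{equation*}
\|V\|_{Z(N^{-2}I)}\lesssim \|(1-\Delta)^{1/2}V\|_{L^4_tL^{8/3}_x(\R\times\T^3\times N^{-2}I)}.
\end{equation*}

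\emph{Step 2 (reduction to Euclidean).} By density in $\dot H^1(\R^4)$, together with Proposition \ref{Stric2} on the semiperiodic side and Euclidean Strichartz on the Euclidean side, I may assume that $\psi\in C^\infty_0(\{|x|\leq 1\})$; then for $N\geq 2$ we have $Q_N\psi=\psi$ and $\psi_N$ is supported in $\{|x|\leq N^{-1}\}$. Viewing functions on $\R\times\T^3$ as $2\pi\Z^3$-periodic functions on $\R^4$, uniqueness for the linear Schr\"odinger flow gives
\begin{equation*}
V(y,t)=\sum_{k\in\Z^3}(e^{it\Delta_{\R^4}}\psi_N)(y+2\pi(0,k)).
\end{equation*}
Starting from the Euclidean kernel $K_t(x)=(4\pi it)^{-2}e^{i|x|^2/(4t)}$ and substituting $w=Nz$ in the representation $e^{it\Delta_{\R^4}}\psi_N(y)=\int K_t(y-z)\psi_N(z)\,dz$, I would integrate by parts repeatedly in $w$. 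Setting $\varphi(w):=|y-w/N|^2/(4t)$, one has $|\nabla_w\varphi|\gtrsim N/T_0$ with $|\nabla_w^2\varphi|\lesssim 1/T_0$ on the region $\{|y|\geq 1/2,\,|t|\leq T_0N^{-2}\}$, so each integration by parts against the smooth, compactly supported amplitude $\psi(w)$ gains a factor $N^{-1}$. This yields $|e^{it\Delta_{\R^4}}\psi_N(y)|+|\nabla_y e^{it\Delta_{\R^4}}\psi_N(y)|\lesssim_K N^{-K}$ uniformly on $\{|y|\geq 1/2\}\times N^{-2}I$ for every $K$; consequently the $k\neq 0$ terms in the periodization contribute only $o_N(1)$ in $L^4_tL^{8/3}_x$, and
\begin{equation*}
\|(1-\Delta)^{1/2}V\|_{L^4_tL^{8/3}_x(\R\times\T^3\times N^{-2}I)}\leq \|(1-\Delta_{\R^4})^{1/2}e^{it\Delta_{\R^4}}\psi_N\|_{L^4_tL^{8/3}_x(\R^4\times N^{-2}I)}+o_N(1).
\end{equation*}

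\emph{Step 3 (rescale and close).} A direct change of variables $y=Nx$, $\tau=N^2t$ exhibits scale invariance of the leading piece: $\|\nabla_x e^{it\Delta_{\R^4}}\psi_N\|_{L^4_tL^{8/3}_x(\R^4\times N^{-2}I)}=\|\nabla_y e^{i\tau\Delta_{\R^4}}\psi\|_{L^4_\tau L^{8/3}_y(\R^4\times I)}\leq\varepsilon$ by \eqref{clo20}, while the zero-order part of $(1-\Delta_{\R^4})^{1/2}$ gains a factor $N^{-1}$ and is negligible. Unwinding the density reduction completes the proof.

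The main obstacle is Step 2: one must control, uniformly for $t\in N^{-2}I$, the influence of the periodic copies of $\psi_N$ in the lift. The point is that a wave packet of frequency $\sim N$ propagates at group velocity $\sim N$ but only for time $\lesssim N^{-2}$, hence travels distance $\lesssim N^{-1}\ll 2\pi$ and never reaches the neighboring fundamental domains. The nonstationary-phase bound above quantifies this and extracts arbitrary polynomial decay in $N$ outside $\{|y|\leq 1/2\}$, more than enough to absorb the $L^4_tL^{8/3}_x$-norm.
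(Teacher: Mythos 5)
Your Step 1 (Sobolev upgrade of the $Z$-norm) and Steps 2--3 (lifting to a periodization of the Euclidean free evolution, nonstationary phase to kill the $k\neq 0$ copies, then rescale) are correct as far as they go, but they only work on the time window $|t|\lesssim T_0 N^{-2}$ for a \emph{fixed} $T_0$: the phase gradient $|\nabla_w\varphi|\sim N/T_0$ degenerates as soon as $|t|$ grows past $O(N^{-2})$, and by $|t|\sim N^{-1}$ the Euclidean wave packet has spread to size $\sim 1$ and the periodic copies overlap essentially completely, so the integration-by-parts gain vanishes.

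This is a genuine gap, because the lemma makes no boundedness assumption on $I$ and the intended application (inside Lemma \ref{GEForEP}, to obtain \eqref{ScatEuclSol}) is precisely with intervals of the form $I=[T,\infty)$, for which $N^{-2}I\cap(-2,2)$ contains times of order $1$. Your argument covers only the sliver $N^{-2}I\cap(-T_0N^{-2},T_0N^{-2})$; the remaining (and for the application, dominant) portion $N^{-2}I\setminus(-T_1N^{-2},T_1N^{-2})$ is not addressed. The paper handles this with a separate \emph{extinction} bound \eqref{extinct}: for $T_1=T_1(\psi,\varepsilon)$ large and all $N\geq 1$, $\|e^{it\Delta}\widetilde\psi_N\|_{Z(\mathbb{R}\setminus(-T_1N^{-2},T_1N^{-2}))}\lesssim\varepsilon$. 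This cannot be obtained by periodizing the Euclidean flow; it relies on the genuinely semiperiodic dispersive estimate \eqref{Stric3} (polynomial decay $N^3|t|^{-1/2}$ from the $\R\times\T^3$ kernel) and the semiperiodic Strichartz estimate of Proposition \ref{Stric2}, interpolated to get frequency-localized $L^4_{x,t}$ smallness for $|t|\geq T_1N^{-2}$. Your proposal must be supplemented by an argument of this type for times away from the origin. For the finite window $|t|\leq T_1N^{-2}$, the paper simply cites Lemma \ref{step1}(ii) with $\rho=0$, which is the precise analogue of your Steps 2--3, so that part of your reasoning is sound (and is essentially a direct reproof rather than a use of the black box).
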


\begin{proof}[Proof of Lemma \ref{step2}] As before, the implicit constants may depend on $E_{\mathbb{R}^4}(\psi)$. We may assume that $\psi\in C^\infty_0(\mathbb{R}^4)$.

We show first that we can fix $T_1=T_1(\psi,\varepsilon)$ such that, for any $N\geq 1$, we have the extinction bound
\begin{equation}\label{extinct}
\|e^{it\Delta}\widetilde{\psi}_N\|_{Z(\mathbb{R}\setminus(-T_1N^{-2},T_1N^{-2}))}\lesssim\varepsilon.
\end{equation}
Using the dispersive estimate \eqref{Stric3}, for any $t\neq 0$ and $M\in\{1,2,4,\ldots\}$
\begin{equation*}
\|P_Me^{it\Delta}\widetilde{\psi}_N\|_{L^\infty(\R\times\T^3)}\lesssim M^3|t|^{-1/2}\|\widetilde{\psi}_N\|_{L^1(\R\times\T^3)}\lesssim_\psi M^3|t|^{-1/2}N^{-3}.
\end{equation*}
In addition, for any $t\neq 0$, $M\in\{1,2,4,\ldots\}$, and $p\in\{0,100\}$
\begin{equation*}
\|P_Me^{it\Delta}\widetilde{\psi}_N\|_{L^2(\R\times\T^3)}\lesssim M^{-2p}\|(1-\Delta)^p\widetilde{\psi}_N\|_{L^2(\R\times\T^3)}\lesssim_\psi M^{-2p}N^{2p-1}.
\end{equation*}
Therefore, for any $t\neq 0$ and $M\in\{1,2,4,\ldots\}$,
\begin{equation*}
\|P_Me^{it\Delta}\widetilde{\psi}_N\|_{L^6(\R\times\T^3)}\lesssim_\psi |t|^{-1/3}N^{-1/3}\cdot\min[M/N,N^{60}/M^{60}],
\end{equation*}
which shows that
\begin{equation}\label{extinct2}
\|P_Me^{it\Delta}\widetilde{\psi}_N\|_{L^6(\R\times\T^3\times[\mathbb{R}\setminus(-T_1N^{-2},T_1N^{-2})])}\lesssim_\psi T_1^{-1/6}\cdot\min[M/N,N^{60}/M^{60}].
\end{equation}
Using Proposition \ref{Stric2}, it follows that
\begin{equation*}
\|P_Me^{it\Delta}\widetilde{\psi}_N\|_{L^p(\R\times\T^3\times I)}\lesssim_\psi M^{2-6/p}N^{-1}
\end{equation*}
for $p=19/5$ and any interval $I\subseteq\R$, $|I|\leq 1$. By interpolation between the last two bounds, for any interval $I\subseteq\mathbb{R}\setminus(-T_1N^{-2},T_1N^{-2})$, $|I|\leq 1$,
\begin{equation*}
\|P_Me^{it\Delta}\widetilde{\psi}_N\|_{L^4(\R\times\T^3\times I)}\lesssim_\psi M^{-1/2}T_1^{-\delta}\min[M/N,N/M],
\end{equation*}
for some $\delta>0$. The desired bound \eqref{extinct} follows.

The bound \eqref{clo21} on the remaining interval $N^{-2}I\cap(-T_1N^{-2},T_1N^{-2})$ follows from Lemma \ref{step1} (ii) with $\rho=0$. This completes the proof of the lemma.
\end{proof}

We conclude this section with a lemma describing nonlinear solutions of the initial-value problem \eqref{eq1.1} corresponding to data concentrating at a point. In view of the profile analysis in the next section, we need to consider slightly more general data. Given $f\in L^2(\R\times\T^3)$, $t_0\in\mathbb{R}$, and $x_0\in\R\times\T^3$ we define
\begin{equation*}
\begin{split}
&(\pi_{x_0}f)(x):=f(x-x_0),\\
&(\Pi_{t_0,x_0})f(x)=(e^{-it_0\Delta}f)(x-x_0)=(\pi_{x_0}e^{-it_0\Delta}f)(x).
\end{split}
\end{equation*}
As in \eqref{rescaled}, given $\phi\in\dot{H}^1(\mathbb{R}^4)$ and $N\geq 1$, we define
\begin{equation*}
T_N\phi(x):=N\widetilde{\phi}(N\Psi^{-1}(x))\qquad\text{ where }\qquad\widetilde{\phi}(y):=\eta(y/N^{1/2})\phi(y),
\end{equation*}
and observe that
\begin{equation*}
T_N:\dot{H}^1(\mathbb{R}^4)\to H^1(\R\times\T^3)\text{ is a linear operator with }\|T_N\phi\|_{H^1(\R\times\T^3)}\lesssim \|\phi\|_{\dot{H}^1(\mathbb{R}^4)}.
\end{equation*}
Let $\widetilde{\mathcal{F}}_e$ denote the set of renormalized Euclidean frames
\begin{equation*}
\begin{split}
\widetilde{\mathcal{F}}_e:=\{(N_k,t_k,x_k)_{k\geq 1}:&\,N_k\in[1,\infty),\,t_k\in[-1,1],\,x_k\in\R\times\T^3,\\
&\,N_k\to\infty,\text{ and }t_k=0 \text{ or }N_k^2|t_k|\to\infty\}.
\end{split}
\end{equation*}

\begin{lemma}\label{GEForEP}
Assume that $(N_k,t_k,x_k)_k\in\widetilde{\mathcal{F}}_e$, $\phi\in\dot{H}^1(\mathbb{R}^4)$, and let $U_k(0)=\Pi_{t_k,x_k}(T_{N_k}\phi)$.

(i) For $k$ large enough (depending only on $\phi$) there is a nonlinear solution $U_k\in X^1(-2,2)$ of the initial-value problem \eqref{eq1.1} and
\begin{equation}\label{ControlOnZNormForEP}
\Vert U_k\Vert_{X^1(-2,2)}\lesssim_{E_{\mathbb{R}^4}(\phi)}1.
\end{equation}

(ii) There exists a Euclidean solution $u\in C(\mathbb{R}:\dot{H}^1(\mathbb{R}^4))$ of
\begin{equation}\label{EEq}
\left(i\partial_t+\Delta_{\R^4}\right)u=u\vert u\vert^2
\end{equation}
with scattering data $\phi^{\pm\infty}$ defined as in \eqref{EScat} such that the following holds, up to a subsequence:
for any $\varepsilon>0$, there exists $T(\phi,\varepsilon)$ such that for all $T\ge T(\phi,\varepsilon)$ there exists $R(\phi,\varepsilon,T)$ such that for all $R\ge R(\phi,\varepsilon,T)$, there holds that
\begin{equation}\label{ProxyEuclHyp}
\Vert U_k-\tilde{u}_k\Vert_{X^1(\vert t-t_k\vert\le TN_k^{-2})}\le\varepsilon,
\end{equation}
for $k$ large enough, where
\begin{equation*}
(\pi_{-x_k}\tilde{u}_k)(x,t)=N_k\eta(N_k\Psi^{-1}(x)/R)u(N_k\Psi^{-1}(x),N_k^2(t-t_k)).
\end{equation*}
In addition, up to a subsequence,
\begin{equation}\label{ScatEuclSol}
\Vert U_k(t)-\Pi_{t_k-t,x_k}T_{N_k}\phi^{\pm\infty}\Vert_{X^1(\{\pm(t-t_k)\geq TN_k^{-2}\}\cap (-2,2))}\le \varepsilon,
\end{equation}
for $k$ large enough (depending on $\phi,\varepsilon,T,R$).
\end{lemma}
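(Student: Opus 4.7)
The plan is to construct $U_k$ on $(-2,2)$ by gluing three pieces: a short-time Euclidean-approximation regime built with Lemma \ref{step1}, and two long-time scattered regimes controlled via Lemma \ref{step2} and Proposition \ref{Stabprop}. First, apply Theorem \ref{MainThmEucl} to obtain the global Euclidean solution $u \in C(\mathbb{R}:\dot H^1(\R^4))$ of \eqref{EEq} with $u(0)=\phi$ and scattering data $\phi^{\pm\infty}$. By translation invariance of \eqref{eq1.1}, it suffices to build $V_k(s,x) := U_k(s+t_k, x+x_k)$ on an interval slightly larger than $[-2-t_k,\, 2-t_k]$ satisfying $V_k(0) = T_{N_k}\phi$, and then set $U_k(t,x) := V_k(t-t_k, x-x_k)$.

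On the short window $|s|\le T N_k^{-2}$, Lemma \ref{step1} with $\rho = 1$ (combined with a mollification $\phi' \in H^5(\R^4)$ of $\phi$ at scale $\varepsilon$, and cutoff radius $R = R(\phi,\varepsilon,T)$ large) produces such a $V_k$ satisfying both \eqref{clo6} and the approximation \eqref{clo18}; returning to $(t,x)$ variables yields \eqref{ProxyEuclHyp}. For the long windows $\pm s \ge T N_k^{-2}$ I would exploit Euclidean scattering: for $T = T(\phi,\varepsilon)$ large, \eqref{EScat} gives $\|u(\pm T) - e^{\pm iT\Delta_{\R^4}}\phi^{\pm\infty}\|_{\dot H^1(\R^4)} \lesssim \varepsilon$ while the Strichartz bound $\|\nabla_{\R^4}(e^{it\Delta_{\R^4}}\phi^{\pm\infty})\|_{L^4_t L^{8/3}_x(\R^4 \times \{\pm t \ge T\})} \lesssim \varepsilon$ holds by scattering. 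The first bound, combined with Lemma \ref{step1} at the matching time, gives
\[
\|V_k(\pm TN_k^{-2}) - e^{\pm iTN_k^{-2}\Delta}T_{N_k}\phi^{\pm\infty}\|_{H^1(\R\times\T^3)} \lesssim \varepsilon
\]
for $N_k$ large, while the second together with Lemma \ref{step2} gives $\|e^{is\Delta}T_{N_k}\phi^{\pm\infty}\|_{Z(\pm s \ge TN_k^{-2})} \lesssim \varepsilon$. Hence Proposition \ref{LocTheory} produces a nonlinear solution on each long window close in $X^1$ to $e^{is\Delta}T_{N_k}\phi^{\pm\infty}$, and Proposition \ref{Stabprop} (applied with $e \equiv 0$ and the linear evolution as approximate solution) glues this to $V_k$ at $s = \pm TN_k^{-2}$. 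This yields the global $X^1$ bound \eqref{ControlOnZNormForEP} and, after undoing the translation, the scattering conclusion \eqref{ScatEuclSol}.

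The frame condition ``$t_k = 0$ or $N_k^2|t_k| \to \infty$'' in $\widetilde{\mathcal{F}}_e$ is precisely what makes this scheme work: it guarantees that the original initial time $s = -t_k$ either coincides with $0$, where $V_k(0) = T_{N_k}\phi$ is fixed by Lemma \ref{step1}, or lies deep inside the long-time regime $|s| \ge TN_k^{-2}$ for $N_k$ large, where the stability construction produces $V_k(-t_k) = (e^{-it_k\Delta}T_{N_k}\phi)(\cdot) + O_{H^1}(\varepsilon)$, consistent with $U_k(0) = \Pi_{t_k, x_k}T_{N_k}\phi$ after undoing the translation. The main technical obstacle is verifying the smallness hypothesis \eqref{safetycheck} of Proposition \ref{Stabprop} simultaneously at both matching times $\pm TN_k^{-2}$: this forces the Euclidean scattering error \eqref{EScat} and the extinction estimate of Lemma \ref{step2} to cooperate, with errors carefully transferred through the rescaling operator $T_{N_k}$ and the coordinate chart $\Psi$. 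The subsequence qualifier in the statement accommodates the fact that the implicit constants in Lemmas \ref{step1} and \ref{step2} depend on the choices of $\phi'$, $R$, and $T$.
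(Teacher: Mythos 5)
Your overall scheme---approximate with Lemma \ref{step1} on the core window, extinguish with Lemma \ref{step2} outside it, and glue via Propositions \ref{LocTheory} and \ref{Stabprop}, with the two subcases $t_k=0$ and $N_k^2|t_k|\to\infty$ handled separately---matches the paper's strategy. However, there is a genuine gap in the second subcase, caused by the choice of the reference Euclidean solution $u$.

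You take $u$ to be the solution of \eqref{EEq} with $u(0)=\phi$, and build $V_k$ (the translate of $U_k$ by $(t_k,x_k)$) as the nonlinear solution with $V_k(0)=T_{N_k}\phi$. But the prescribed data $U_k(0)=\Pi_{t_k,x_k}T_{N_k}\phi$ translates to the requirement $V_k(-t_k)=e^{-it_k\Delta}T_{N_k}\phi$, a \emph{free} evolution of $T_{N_k}\phi$. Since $V_k$ is a \emph{nonlinear} flow with $V_k(0)=T_{N_k}\phi$ and $N_k^2|t_k|\to\infty$, one instead gets $V_k(-t_k)\approx e^{-it_k\Delta}T_{N_k}\phi^{-\infty}$, where $\phi^{-\infty}$ is the backward scattering datum of $u$. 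Your two constraints $V_k(0)=T_{N_k}\phi$ and $V_k(-t_k)\approx e^{-it_k\Delta}T_{N_k}\phi$ are mutually incompatible unless $\phi^{-\infty}=\phi$, which fails for the solution with $u(0)=\phi$. The claimed identity in your last paragraph, $V_k(-t_k)=e^{-it_k\Delta}T_{N_k}\phi+O_{H^1}(\varepsilon)$, is therefore false as written.

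The fix (and what the paper does) is to choose $u$ differently in this case: when $N_k^2 t_k\to+\infty$, take $u$ to be the solution of \eqref{EEq} satisfying $\|\nabla_{\R^4}(u(t)-e^{it\Delta_{\R^4}}\phi)\|_{L^2(\R^4)}\to0$ as $t\to-\infty$ (so $\phi^{-\infty}=\phi$), set $\tilde\phi:=u(0)$, and build the auxiliary sequence with initial data $T_{N_k}\tilde\phi$ at time $0$. Then the backward scattering statement \eqref{ScatEuclSol} applied to this auxiliary sequence yields exactly $V_k(-t_k)\approx e^{-it_k\Delta}T_{N_k}\phi$ in $H^1$, which matches the prescribed $U_k(0)$ (after undoing the translation), and Proposition \ref{Stabprop} then identifies $U_k$ with the translate of the auxiliary sequence. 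Note that the lemma's statement only asserts the existence of \emph{some} Euclidean solution $u$---it does not require $u(0)=\phi$---so this freedom is intended.
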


\begin{proof}[Proof of Lemma \ref{GEForEP}] Clearly, we may assume that $x_k=0$.

We have two cases. If $t_k=0$ for any $k$ then the lemma follows from Lemma \ref{step1} and Lemma \ref{step2}: we let $u$ be the nonlinear Euclidean solution of \eqref{EEq} with $u(0)=\phi$ and notice that for any $\delta>0$ there is $T(\phi,\delta)$ such that
\begin{equation*}
\|\nabla_{\R^4} u\|_{L^3_{x,t}(\mathbb{R}^4\times\{|t|\geq T(\phi,\delta)\})}\leq\delta.
\end{equation*}
The bound \eqref{ProxyEuclHyp} follows for any fixed $T\geq T(\phi,\delta)$ from Lemma \ref{step1}. Assuming $\delta$ is sufficiently small and $T$ is sufficiently large (both depending on $\phi$ and $\varepsilon$), the bound \eqref{ScatEuclSol} then follow from Corollary \ref{step2} (which guarantees smallness of $\mathbf{1}_{\pm}(t)\cdot e^{it\Delta}U_k(\pm N_k^{-2}T(\phi,\delta))$ in $Z(-2,2)$) and Lemma \ref{LocTheory}.

Otherwise, if $\lim_{k\to\infty}N_k^2|t_k|=\infty$, we may assume by symmetry that $N_k^2t_k\to+\infty$. Then we let $u$ be the solution of
\eqref{EEq} such that
\begin{equation*}
\Vert\nabla_{\R^4}\left(u(t)-e^{it\Delta_{\R^4}}\phi\right)\Vert_{L^2(\mathbb{R}^4)}\to0
\end{equation*}
as $t\to-\infty$ (thus $\phi^{-\infty}=\phi$).
We let $\tilde{\phi}=u(0)$ and  apply the conclusions of the lemma to the frame $(N_k,0,0)_k\in\mathcal{F}_e$ and $V_k(s)$, the solution of \eqref{eq1} with initial data $V_k(0)=T_{N_k}\tilde{\phi}$. In particular, we see from the fact that $N_k^2t_k\to+\infty$ and \eqref{ScatEuclSol} that
\begin{equation*}
\Vert V_k(-t_k)-\Pi_{t_k,0}T_{N_k}\phi\Vert_{H^1(\R\times\T^3)}\to 0
\end{equation*}
as $k\to\infty$. Then, using Proposition \ref{Stabprop}, we see that
\begin{equation*}
\Vert U_k-V_k(\cdot-t_k)\Vert_{X^1(-2,2)}\to 0
\end{equation*}
as $k\to\infty$, and we can conclude by inspecting the behavior of $V_k$. This ends the proof.
\end{proof}

\section{Profile decompositions}\label{profiles}

In this section we show that given a bounded sequence of functions $f_k\in H^1(\R\times\T^3)$ we can construct suitable {\it{profiles}} and express the sequence in terms of these profiles. The statements and the arguments in this section are very similar to those in \cite[Section 5]{IoPaSt}. See also \cite{Ker} for the original proofs of Keraani in the Euclidean geometry.

As before, given $f\in L^2(\R\times\T^3)$, $t_0\in\mathbb{R}$, and $x_0\in\R\times\T^3$ we define
\begin{equation}\label{PI}
\begin{split}
&(\pi_{x_0}f)(x):=f(x-x_0),\\
&(\Pi_{t_0,x_0})f(x)=(e^{-it_0\Delta}f)(x-x_0)=(\pi_{x_0}e^{-it_0\Delta}f)(x).
\end{split}
\end{equation}
As in \eqref{rescaled}, given $\phi\in\dot{H}^1(\mathbb{R}^4)$ and $N\geq 1$, we define
\begin{equation}\label{TN}
T_N\phi(x):=N\widetilde{\phi}(N\Psi^{-1}(x))\qquad\text{ where }\qquad\widetilde{\phi}(y):=\eta(y/N^{1/2})\phi(y),
\end{equation}
and observe that
\begin{equation}\label{TN2}
T_N:\dot{H}^1(\mathbb{R}^4)\to H^1(\R\times\T^3)\text{ is a linear operator with }\|T_N\phi\|_{H^1(\R\times\T^3)}\lesssim \|\phi\|_{\dot{H}^1(\mathbb{R}^4)}.
\end{equation}

The following is our main definition.

\begin{definition}\label{DefPro}

\begin{enumerate}

\item We define a frame to be a sequence $\mathcal{F}=(N_k,t_k,x_k)_k$ with $N_k\ge 1$, $t_k\in [-1,1]$, $x_k\in\mathbb{R}\times\mathbb{T}^3$ such that either $N_k=1$ and $t_k=0$, (Scale-$1$ frame), or $N_k\to+\infty$ (Euclidean frame). We denote $\mathcal{F}_1$ the set of Scale-1 frames and $\mathcal{F}_e$ the set of Euclidean frames. We say that two frames $(N_k,t_k,x_k)_k$ and $(M_k,s_k,y_k,)_k$ are orthogonal if
\begin{equation*}
\lim_{k\to+\infty} \left(\left\vert \ln\frac{N_k}{M_k}\right\vert+N_k^2\vert t_k-s_k\vert+N_k\vert x_k-y_k\vert\right)=+\infty.\end{equation*}
Two frames that are not orthogonal are called equivalent.

\item If $\mathcal{O}=(N_k,t_k,x_k)_k$ is a Euclidean frame and if $\phi\in \dot{H}^1(\mathbb{R}^4)$, we define the Euclidean profile associated to $(\phi,\mathcal{O})$ as the sequence $\widetilde{\phi}_{\mathcal{O}_k}$
\begin{equation*}
\widetilde{\phi}_{\mathcal{O}_k}(x):=\Pi_{t_k,x_k}(T_{N_k}\phi).
\end{equation*}

\item
If $\mathcal{O}_k=(1,0,x_k)_k$ is a Scale-1 frame and $\varphi\in H^1(\mathbb{R}\times\mathbb{T}^3)$, we define the associated Scale-1 profile by
\begin{equation*}
\widetilde{\varphi}_{\mathcal{O}_k}(x):=\varphi(x-x_k).
\end{equation*}

\end{enumerate}
\end{definition}

The following lemma summarizes some of the basic properties of profiles associated to equivalent/orthogonal frames. Its proof is very similar to the proof of Lemma 5.4 in \cite{IoPaSt}, and is omitted.

\begin{lemma}(Equivalence of frames)\label{EquivFrames}

(i) If $\mathcal{O}$ and $\mathcal{O}^\prime$ are equivalent Euclidean profiles (resp Scale-1 profiles), then, there exists an isometry of $\dot{H}^1(\mathbb{R}^4)$ (resp of $H^1(\mathbb{R}\times\mathbb{T}^3)$), $T$ such that for any profile $\widetilde{\psi}_{\mathcal{O}^\prime_k}$, up to a subsequence there holds that
\begin{equation}\label{equiv}
\limsup_{k\to+\infty}
\Vert \widetilde{T\psi}_{\mathcal{O}_k}-\widetilde{\psi}_{\mathcal{O}^\prime_k}\Vert_{H^1(\mathbb{R}\times\mathbb{T}^3)}=0.
\end{equation}

(ii) If $\mathcal{O}$ and $\mathcal{O}^\prime$ are orthogonal frames and $\widetilde{\psi}_{\mathcal{O}_k}$, $\widetilde{\varphi}_{\mathcal{O}^\prime_k}$ are corresponding profiles, then, up to a subsequence,
\begin{equation*}
\begin{split}
\lim_{k\to+\infty}\langle \widetilde{\psi}_{\mathcal{O}_k},\widetilde{\varphi}_{\mathcal{O}^\prime_k}\rangle_{H^1\times H^1(\mathbb{R}\times\mathbb{T}^3)}&=0,\\
\lim_{k\to+\infty}\langle |\widetilde{\psi}_{\mathcal{O}_k}|^2,|\widetilde{\varphi}_{\mathcal{O}^\prime_k}|^2\rangle_{L^2\times L^2(\mathbb{R}\times\mathbb{T}^3)}&=0.
\end{split}
\end{equation*}

(iii) If $\mathcal{O}$ is a Euclidean frame and $\widetilde{\psi}_{\mathcal{O}_k}$, $\widetilde{\varphi}_{\mathcal{O}_k}$ are two profiles corresponding to $\mathcal{O}$, then
\begin{equation*}
\begin{split}
&\lim_{k\to+\infty}\left(\Vert\widetilde{\psi}_{\mathcal{O}_k}\Vert_{L^2}+\Vert\widetilde{\varphi}_{\mathcal{O}_k}\Vert_{L^2}\right)=0,\\
&\lim_{k\to+\infty}\langle \widetilde{\psi}_{\mathcal{O}_k},\widetilde{\varphi}_{\mathcal{O}_k}\rangle_{H^1\times H^1(\mathbb{R}\times\mathbb{T}^3)}=\langle \psi,\varphi\rangle_{\dot{H}^1\times\dot{H}^1(\mathbb{R}^4)}.
\end{split}
\end{equation*}

(iv) If $\mathcal{O}$ is a Scale-1 frame and $\widetilde{\psi}_{\mathcal{O}_k}$, $\widetilde{\varphi}_{\mathcal{O}_k}$ are two profiles corresponding to the same frame $\mathcal{O}$, then
\begin{equation*}
\lim_{k\to+\infty}\langle \widetilde{\psi}_{\mathcal{O}_k},\widetilde{\varphi}_{\mathcal{O}_k}\rangle_{H^1\times H^1(\mathbb{R}\times\mathbb{T}^3)}=\langle \psi,\varphi\rangle_{H^1\times H^1(\mathbb{R}\times\mathbb{T}^3)}.
\end{equation*}
\end{lemma}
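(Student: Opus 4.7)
The plan is to reduce each of the four statements to direct computations in physical or Fourier space, using the explicit form of the profiles together with passage to subsequences so that the equivalence ratios stabilize, and exploiting unitarity of $\Pi_{t_k,x_k}$ on $L^2$ and $H^1$ to cancel the common propagation/translation factors that appear in both profiles of a pair.

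For the equivalence statement (i), consider two equivalent Euclidean frames $\mathcal{O}=(N_k,t_k,x_k)_k$ and $\mathcal{O}'=(N_k',t_k',x_k')_k$; extract a subsequence so that $N_k/N_k'\to\lambda\in(0,\infty)$, $N_k^2(t_k-t_k')\to\tau\in\mathbb{R}$, and $N_k(x_k-x_k')\to y\in\R^4$. Define $T:\dot H^1(\R^4)\to\dot H^1(\R^4)$ as the composition of the $\dot H^1$-invariant rescaling $\psi\mapsto\lambda^{-1}\psi(\lambda^{-1}\cdot)$, the translation by $y$, and the free Euclidean flow $e^{i\tau\Delta_{\R^4}}$; each of the three factors is an isometry. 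A change of variables then identifies $\widetilde{T\psi}_{\mathcal{O}_k}$ with $\widetilde{\psi}_{\mathcal{O}_k'}$ modulo the error $(Q_{N_k}-I)T\psi$ and a ``Euclidean versus semiperiodic propagator'' discrepancy, both of which vanish in $H^1(\R\times\T^3)$ as $k\to\infty$ by density and by the approximation scheme already used in Lemma \ref{step1}. For equivalent Scale-$1$ frames the only freedom is $x_k-x_k'\to y\in\R\times\T^3$ (after subsequence), and translation by $y$ provides the required isometry of $H^1(\R\times\T^3)$. Statement (iv) is then immediate from the fact that $\pi_{x_k}$ is an isometry of $H^1(\R\times\T^3)$, so no subsequence extraction is even needed.

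For the orthogonality statement (ii), I split by frame types. When both frames are Euclidean, the orthogonality condition forces at least one of $|\ln(N_k/N_k')|$, $N_k^2|t_k-t_k'|$, $N_k|x_k-x_k'|$ to diverge. Scale separation gives asymptotically disjoint dyadic frequency supports, so the $H^1$ pairing vanishes, while the product $\int|\widetilde\psi_{\mathcal{O}_k}|^2|\widetilde\varphi_{\mathcal{O}_k'}|^2$ is controlled by H\"older and the critical Sobolev embedding $H^1\hookrightarrow L^4$. Spatial separation with comparable scales places the profiles, up to arbitrarily small $H^1$ error from truncation to compactly supported approximants, in disjoint balls of radius $O(N_k^{-1})$, forcing both pairings to vanish. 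Time separation with comparable scales reduces, after undoing the common rescaling, to the Euclidean facts that $\langle\psi,e^{is\Delta_{\R^4}}\varphi\rangle_{\dot H^1(\R^4)}\to 0$ as $|s|\to\infty$ (Plancherel) and that the free Euclidean flow decays in $L^4$. When one frame is Euclidean and the other Scale-$1$, the Euclidean profile satisfies $\|\widetilde{\psi}_{\mathcal{O}_k}\|_{L^2}\to 0$ by direct rescaling (the $L^2$ norm picks up a negative power of $N_k$ while $\dot H^1$ stays bounded), whereas the Scale-$1$ profile has bounded $L^2$ norm, so Cauchy--Schwarz on one derivative closes the $H^1$ pairing and H\"older closes the $L^4$ pairing. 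Two orthogonal Scale-$1$ frames simply force $|x_k-x_k'|\to\infty$, and the pairings vanish by a density approximation plus translation.

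For (iii) the quantitative input is the rescaling $\|T_{N_k}\phi\|_{L^2(\R\times\T^3)}\to 0$ for every $\phi\in\dot H^1(\R^4)$, obtained for Schwartz $\phi$ by direct change of variables and extended by density via the uniform bound \eqref{TN2}. Unitarity of $\Pi_{t_k,x_k}$ on $L^2$ transfers this vanishing to $\widetilde\psi_{\mathcal{O}_k}$. The $H^1$ inner product is then the sum of a vanishing $L^2$ part and a $\dot H^1$ part, and the latter, after cancelling the common $\Pi_{t_k,x_k}$ factors, becomes the Euclidean pairing $\langle Q_{N_k}\psi,Q_{N_k}\varphi\rangle_{\dot H^1(\R^4)}$, which converges to $\langle\psi,\varphi\rangle_{\dot H^1(\R^4)}$. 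The main technical obstacle I anticipate is the $L^4$ orthogonality in (ii) for two Euclidean frames with comparable scales and diverging time offset, since the embedding $\dot H^1(\R^4)\hookrightarrow L^4(\R^4)$ is critical and there is no pointwise decay of $e^{is\Delta_{\R^4}}\phi$ for generic $\phi\in\dot H^1(\R^4)$. The standard workaround is to approximate $\phi,\varphi$ in $\dot H^1(\R^4)$ by Schwartz functions for which $e^{is\Delta_{\R^4}}$ decays like $|s|^{-2}$ in $L^\infty$, to transfer this decay through the rescaling to $L^4(\R\times\T^3)$ on the rescaled time scale, and to absorb the approximation error uniformly in $k$ using \eqref{TN2}.
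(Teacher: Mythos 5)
The paper does not actually prove this lemma: it explicitly omits the proof, referring the reader to Lemma~5.4 of \cite{IoPaSt}, so there is no in-text argument to compare yours against and I am assessing the proposal on its own terms.

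Your overall plan is the right one: cancel the common $\Pi_{t_k,x_k}$ factors using the $L^2$- and $\dot H^1$-unitarity of $\pi_{x_0}$ and $e^{it\Delta}$, pass to a subsequence along which the equivalence parameters $N_k/N_k'$, $N_k^2(t_k-t_k')$, $N_k(x_k-x_k')$ converge, and argue on compactly supported or Schwartz approximants in $\dot H^1(\R^4)$. Your outlines of (i), (iii), and (iv) are essentially correct (in (i) the precise composition order of scaling, translation, and free flow in the definition of $T$ needs care so that the limit indeed matches $\widetilde{\psi}_{\mathcal{O}'_k}$, but this is bookkeeping), and identifying the time-separated $L^4$ pairing as delicate, together with the fix by Schwartz approximation and Euclidean dispersion, is also correct.

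There is, however, a real gap in the $L^4$-orthogonality in (ii) when both frames are Euclidean and $|\ln(N_k/N_k')|\to\infty$. You say the pairing $\int|\widetilde\psi_{\mathcal{O}_k}|^2|\widetilde\varphi_{\mathcal{O}'_k}|^2$ is ``controlled by H\"older and the critical Sobolev embedding $H^1\hookrightarrow L^4$,'' but that chain only yields the a~priori bound $\|\widetilde\psi_{\mathcal{O}_k}\|_{L^4}^2\|\widetilde\varphi_{\mathcal{O}'_k}\|_{L^4}^2=O(1)$: precisely because the problem is energy-critical, neither $L^4$ norm shrinks under rescaling, so no smallness emerges from this split. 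To close it, first approximate $\psi,\varphi$ by bounded compactly supported functions and pass to a subsequence along which the absolute rescaled times $N_k^2t_k$ and $(N_k')^2t_k'$ stay bounded or diverge. If both stay bounded, absorb the limits into the data and, with $N_k\gg N_k'$, use the \emph{asymmetric} H\"older split $\int|\widetilde\psi_k|^2|\widetilde\varphi_k|^2\le\|\widetilde\psi_k\|_{L^2}^2\|\widetilde\varphi_k\|_{L^\infty}^2\lesssim N_k^{-2}(N_k')^2\to 0$, which works because $\|T_N\phi\|_{L^2}\sim N^{-1}$ while $\|T_N\phi\|_{L^\infty}\sim N$ for such approximants. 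If one of the absolute rescaled times diverges, that profile's $L^4$ norm tends to zero by the semiperiodic dispersive bound \eqref{Stric3} interpolated with $L^2$ (as in the extinction step in the proof of Lemma~\ref{step2}, applied to a Schwartz approximant), and H\"older $L^4\times L^4$ finishes. The same dichotomy on absolute rescaled times is also implicitly needed in the spatially separated case for the $L^4$ pairing; the point is that it is not only the time-separated case that requires dispersion, and H\"older-plus-Sobolev alone does not supply the required smallness.
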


\begin{definition}\label{absent}
We say that a sequence of functions $\{f_k\}_k\subseteq H^1(\R\times\T^3)$ is absent from a frame $\mathcal{O}$ if, up to a subsequence, for every profile $\psi_{\mathcal{O}_k}$ associated to $\mathcal{O}$,
\begin{equation*}
\int_{\mathbb{R}\times\mathbb{T}^3}\left(f_k\overline{\widetilde{\psi}}_{\mathcal{O}_k}+\nabla f_k\nabla\overline{\widetilde{\psi}}_{\mathcal{O}_k}\right)dx\to0
\end{equation*}
as $k\to+\infty$.
\end{definition}

Note in particular that a profile associated to a frame $\mathcal{O}$ is absent from any frame orthogonal to $\mathcal{O}$.

The following Lemma is the core of this section.

\begin{lemma}\label{ProfileDec1}
Consider $\{f_k\}_k$ a sequence of functions in $H^1(\mathbb{R}\times\mathbb{T}^3)$ satisfying
\begin{equation}\label{FkBounded}
\limsup_{k\to+\infty}\Vert f_k\Vert_{H^1(\mathbb{R}\times\mathbb{T}^3)}\lesssim E
\end{equation}
and fix $\delta>0$. There exists $N\lesssim \delta^{-2}$ profiles $\widetilde{\psi}^\alpha_{\mathcal{O}^\alpha_k}$ associated to pairwise orthogonal frames $\mathcal{O}^\alpha$, $\alpha=1\dots N,$ such that, after extracting a subsequence,
\begin{equation}
f_k=\sum_{1\le \alpha\le N}\widetilde{\psi}^\alpha_{\mathcal{O}^\alpha_k}+R_k
\end{equation}
where $R_k$ is absent from the frames $\mathcal{O}^\alpha$ and is small in the sense that
\begin{equation}\label{smallness}
\sup_{N\ge 1,\,|t|\leq 1,\,x\in\mathbb{R}\times\mathbb{T}^3}N^{-1}\left\vert \left(e^{it\Delta}P_NR_k\right)(x)\right\vert\le\delta.
\end{equation}
Besides, we also have the following orthogonality relations
\begin{equation}\label{Orthogonality}
\begin{split}
\Vert f_k\Vert_{L^2}^2&=\sum_{\beta}\Vert \psi^\beta\Vert_{L^2}^2+\Vert R_k\Vert_{L^2}^2+o_k(1),\\
\Vert \nabla f_k\Vert_{L^2}^2&=\sum_{\beta}\Vert \nabla\psi^\beta\Vert_{L^2}^2+\sum_\gamma\Vert\nabla_{\R^4}\psi^\gamma\Vert_{L^2(\mathbb{R}^4)}^2+\Vert\nabla R_k\Vert_{L^2}^2+o_k(1),
\end{split}
\end{equation}
where $\{\alpha\}=\{\beta\}\cup\{\gamma\}$, $\beta$ corresponding to Scale-1 frames and $\gamma$ to Euclidian frames.
\end{lemma}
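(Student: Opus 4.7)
The strategy is the inductive extraction of profiles, adapted to the two-type frame structure. Initialize $R_k^{(0)}:=f_k$; at step $n$ set
\[
\mu_n:=\limsup_{k\to\infty}\sup_{N\ge 1,\,|t|\le 1,\,x\in\R\times\T^3}N^{-1}\big|e^{it\Delta}P_N R_k^{(n)}(x)\big|.
\]
If $\mu_n\le\delta$ stop with $N=n$, $R_k:=R_k^{(n)}$, which satisfies \eqref{smallness} by definition. Otherwise, after a subsequence, fix a dyadic $N_k\ge 1$, $t_k\in[-1,1]$, and $x_k\in\R\times\T^3$ with $N_k^{-1}|e^{it_k\Delta}P_{N_k}R_k^{(n)}(x_k)|\ge\delta/2$, extract a profile $\widetilde{\psi}^{n+1}_{\mathcal{O}^{n+1}_k}$ as below, and set $R_k^{(n+1)}:=R_k^{(n)}-\widetilde{\psi}^{n+1}_{\mathcal{O}^{n+1}_k}$.

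Passing to a further subsequence we split into two cases. If $N_k$ stays bounded, we may take $N_k\equiv N^*\in[1,\infty)$ and $t_k\to t^*$. The sequence $\pi_{-x_k}e^{it_k\Delta}R_k^{(n)}$ is bounded in $H^1$ and admits a weak limit $\varphi$; since $P_{N^*}$ commutes with translation, $|P_{N^*}\varphi(0)|\ge\delta/2$, which by Bernstein gives $\|\varphi\|_{H^1}\gtrsim\delta$. Using strong convergence $e^{-it_k\Delta}\to e^{-it^*\Delta}$ on $H^1$, we set $\psi^{n+1}:=e^{-it^*\Delta}\varphi$ and take the Scale-$1$ profile $\widetilde{\psi}^{n+1}_{\mathcal{O}^{n+1}_k}(x):=\psi^{n+1}(x-x_k)$ on $\mathcal{O}^{n+1}=(1,0,x_k)_k$. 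If instead $N_k\to\infty$, rescale via
\[
\tilde g_k(y):=N_k^{-1}(e^{it_k\Delta}R_k^{(n)})(x_k+\Psi(y/N_k)),
\]
extended to $\R^4$ by a smooth cutoff at scale $N_k^{1/2}$. Since $\Psi$ is the identity near $0$, the change of variable gives $\|\nabla_y\tilde g_k\|_{L^2(\R^4)}\lesssim\|\nabla R_k^{(n)}\|_{L^2(\R\times\T^3)}$, and comparing the semiperiodic Littlewood--Paley kernel of $P_{N_k}$ with the Euclidean analogue $\tilde P_1$ (whose periodic corrections are $O(N_k^{-\infty})$) yields $|\tilde P_1\tilde g_k(0)|\ge\delta/2+o_k(1)$. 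A weak $\dot{H}^1(\R^4)$-limit $\phi$ of $\tilde g_k$ then satisfies $\|\phi\|_{\dot{H}^1(\R^4)}\gtrsim\delta$ by Bernstein, and we form the Euclidean profile $\widetilde{\phi}^{n+1}_{\mathcal{O}^{n+1}_k}:=\Pi_{t_k,x_k}(T_{N_k}\phi)$ with $\mathcal{O}^{n+1}=(N_k,t_k,x_k)_k$.

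Orthogonality of successive frames is automatic: by construction $R_k^{(n)}$ is absent (in the sense of Definition \ref{absent}) from each earlier $\mathcal{O}^j$, $j\le n$, so a new frame equivalent to some $\mathcal{O}^j$ would force $\psi^{n+1}=0$ via Lemma \ref{EquivFrames}\,(i), contradicting $\|\psi^{n+1}\|\gtrsim\delta$. The Pythagorean identities \eqref{Orthogonality} follow inductively from the weak convergence and Lemma \ref{EquivFrames}\,(ii)--(iv), Lemma \ref{EquivFrames}\,(iii) in particular discarding the $L^2$-mass of Euclidean profiles in the limit. Combining with the uniform lower bound $\|\psi^n\|\gtrsim\delta$ on each extracted profile, both sums in \eqref{Orthogonality} are bounded above by $E^2+o_k(1)$, so the iteration must terminate after at most $N\lesssim\delta^{-2}$ steps (with the implicit constant depending on $E$).

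The main obstacle is the Euclidean extraction step: rigorously passing between the semiperiodic Littlewood--Paley projector $P_{N_k}$ and its Euclidean analogue under the rescaling through the chart $\Psi$, and extracting a quantitative lower bound $\|\phi\|_{\dot{H}^1(\R^4)}\gtrsim\delta$ on the weak limit. Once this comparison is carried out, the remaining bookkeeping (orthogonality of frames, Pythagorean identities, termination) parallels \cite[Section 5]{IoPaSt}.
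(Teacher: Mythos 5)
Your plan is essentially the paper's proof: inductively extract frames from the same critical functional, obtain the profile as a weak limit (directly in $H^1$ in the Scale-1 case, after rescaling through the chart in the Euclidean case), use absence to get automatic orthogonality of the new frame to the earlier ones, and terminate via the Pythagorean expansion and the uniform lower bound $\|\psi^n\|\gtrsim\delta$. The one place where your presentation diverges is in how nonvanishing of the extracted Euclidean profile is established: the paper builds a concrete test profile $\psi=\mathcal{F}^{-1}_{\R^4}(|\xi|^{-2}[\eta^4(\xi)-\eta^4(2\xi)])$, shows via Poisson summation that $(1-\Delta)T_{N_k}\psi$ approximates $N_k^{-1}P_{N_k}\delta_0$ in $L^{4/3}\hookrightarrow H^{-1}$, and reads off $\langle\varphi,\psi\rangle_{\dot H^1}\gtrsim\delta$ in the limit; you instead pass to the weak $\dot H^1(\R^4)$-limit $\phi$, compare the rescaled semiperiodic LP kernel with its Euclidean analogue to get $|\widetilde P_1\phi(0)|\gtrsim\delta$, and finish with Bernstein and $\dot H^1(\R^4)\hookrightarrow L^4$. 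These are two renditions of the same duality identity (both hinge on the quantitative Poisson-summation comparison of the periodic LP kernel with its Euclidean counterpart under $T_{N_k}$), and you correctly flag this comparison as the one step you have not executed. Once it is filled in — exactly as in the paper — the remaining bookkeeping is sound; the only cosmetic discrepancy is that in the Scale-1 case you conjugate by $e^{it_k\Delta}$ and undo it at the end, while the paper absorbs the time parameter into the fixed test function, which is equivalent since $t_k\to t^*$ gives strong convergence of the propagators.
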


\begin{proof}[Proof of Lemma \ref{ProfileDec1}] We will split this proof into several parts.

(I) Extraction of a frame. For a sequence $\{f_k\},$ define the following functional:
\begin{equation*}
\Lambda(\{f_k\})=\limsup_{k\to+\infty}\sup_{N\ge 1,\,|t|\leq 1,\,x\in\mathbb{R}\times\mathbb{T}^3}N^{-1}\left\vert \left(e^{it\Delta}P_Nf_k\right)(x)\right\vert.
\end{equation*}
We claim that if $\Lambda(\{f_k\})\geq \delta$, then there exists a frame $\mathcal{O}$ and an associated profile $\widetilde{\psi}_{\mathcal{O}_k}$ satisfying
\begin{equation}\label{claim11}
\limsup_{k\to+\infty}\Vert\widetilde{\psi}_{\mathcal{O}_k}\Vert_{H^1}\lesssim 1
\end{equation}
and
\begin{equation}\label{claim12}
\limsup_{k\to+\infty}\langle f_k,\widetilde{\psi}_{\mathcal{O}_k}\rangle_{H^1\times H^{1}}\ge\frac{\delta}{2}.
\end{equation}
In addition, if $f_k$ was absent from a family of frames $\mathcal{O}^\alpha$, then $\mathcal{O}$ is orthogonal to all the previous frames $\mathcal{O}^\alpha$.

Let us prove the claim above. By assumption, up to extracting a subsequence, there exists a sequence $(N_k,t_k,x_k)_k$ such that, for all $k$
\begin{equation*}
\begin{split}
\frac{\delta}{2}&\le N_k^{-1}\left\vert \left(e^{it_k\Delta}P_{N_k}f_k\right)(x_k)\right\vert=\left\vert \langle N_k^{-1}e^{it_k\Delta}P_{N_k}f_k,\delta_{x_k}\rangle_{\mathcal{D}\times\mathcal{D}^\prime}\right\vert\\
&\le\left\vert \langle f_k,N_k^{-1}e^{-it_k\Delta}P_{N_k}\delta_{x_k}\rangle_{H^1\times H^{-1}}\right\vert.
\end{split}
\end{equation*}
Now, first assume that $N_k$ remains bounded, then, up to a subsequence, one may assume that $N_k\to N_\infty$ and $t_k\to t_\infty$. In this case, we define $\mathcal{O}=(1,0,x_k)_k$ and
\begin{equation*}
\psi=(1-\Delta)^{-1}N_\infty^{-1}e^{-it_\infty\Delta}P_{N_\infty}\delta_0.
\end{equation*}
Inequality \eqref{claim12} is clear since
\begin{equation*}
(N,t,y)\mapsto N^{-1}(1-\Delta)^{-1}e^{-it\Delta}P_{N}\delta_0
\end{equation*}
is a strongly continuous function. The uniform $H^1$ bound in \eqref{claim11} is clear from the definition.

Now, we assume that $N_k\to+\infty$ and we define the Euclidean frame $\mathcal{O}=(N_k,t_k,x_k)_k$ and the function
\begin{equation*}
\psi=\mathcal{F}^{-1}_{\R^4}\left(|\xi|^{-2}[\eta^4(\xi)-\eta^4(2\xi)]\right)\in H^1(\mathbb{R}^4),\quad\xi=(\xi_1,\xi_2,\xi_3,\xi_4)\in\mathbb{R}^4.
\end{equation*}
Using the Poisson summation formula, it is not hard to prove that
\begin{equation*}
\lim_{k\to+\infty}\|(1-\Delta)T_{N_k}\psi-N_k^{-1}P_{N_k}\delta_0\|_{L^{4/3}(\R\times\T^3)}=0.
\end{equation*}
Thus $\|(1-\Delta)T_{N_k}\psi-N_k^{-1}P_{N_k}\delta_0\|_{H^{-1}(\R\times\T^3)}\to 0$ and we conclude that
\begin{equation*}
\frac{\delta}{2}\lesssim\left\vert\langle f_k,N_k^{-1}e^{-it_k\Delta}P_{N_k}\delta_{x_k}\rangle_{H^1\times H^{-1}}\right\vert\lesssim \left\vert\langle f_k,(1-\Delta)\widetilde{\psi}_{\mathcal{O}_k}\rangle_{H^1\times H^{-1}}\right\vert.
\end{equation*}
Changing $\psi$ by $e^{i\theta}\psi$ to make the scalar product real valued, we obtain \eqref{claim12}.

The last claim about orthogonality of $\mathcal{O}$ with $\mathcal{O}^\alpha$ follows from Lemma \ref{EquivFrames} and the existence of a nonzero scalar product in \eqref{claim12}.

(II) Now that we have selected a frame, we can select the localization of our sequence in this frame as a linear profile.

First, if the frame selected above $\mathcal{O}=(1,0,x_k)_k$ was a Scale-$1$ frame, we consider the sequence
\begin{equation*}
g_k(x):=f_k(x+x_k)=\pi_{-x_k}f_k.
\end{equation*}
This is a bounded sequence in $H^1(\mathbb{R}\times\mathbb{T}^3)$, thus, up to considering a subsequence, we can assume that it converges weakly to $\varphi\in H^1(\mathbb{R}\times\mathbb{T}^3)$. We then define the profile corresponding to $\mathcal{O}$ as $\widetilde{\varphi}_{\mathcal{O}_k}$. By its definition and \eqref{FkBounded}, $\varphi$ has norm smaller than $E$. Besides, we also have that
\begin{equation*}
\begin{split}
\frac{\delta}{2}&\lesssim\lim_{k\to+\infty}\langle f_k,\psi(\cdot-x_k)\rangle_{H^1\times H^1}\lesssim\lim_{k\to+\infty}\langle g_k,\psi\rangle_{H^1\times H^1}=\langle \varphi,\psi\rangle_{H^1\times H^1}.
\end{split}
\end{equation*}
Consequently, we get that
\begin{equation}\label{NonzeroProfile}
\Vert\varphi\Vert_{H^1}\gtrsim\delta.
\end{equation}
We also observe that since $g_k-\varphi$ weakly converges to $0$ in $H^1$, there holds that
\begin{equation}\label{AdditionOfEnergies}
\begin{split}
\Vert A f_k\Vert_{L^2}^2&=\Vert A g_k\Vert_{L^2}^2=\Vert A(g_k-\varphi)\Vert_{L^2}^2-\Vert A\varphi\Vert_{L^2}^2+o_k(1)\\
&=\Vert A(f_k-\varphi_{\mathcal{O},k})\Vert_{L^2}^2-\Vert A\varphi\Vert_{L^2}^2+o_k(1)
\end{split}
\end{equation}
for $A=1$ or $A=\nabla$.

The situation if $\mathcal{O}$ is a Euclidean frame is similar. In this case, for $R>0$ and $k$ large enough, we consider
\begin{equation*}
\varphi^R_k(y)=N_k^{-1}\eta^4(y/R)\left(\Pi_{-t_k,-x_k}f_k\right)(\Psi(y/N_k)),\quad y\in\mathbb{R}^4.
\end{equation*}
This is a sequence of functions bounded in $\dot{H}^1(\mathbb{R}^4)$, uniformly in $R$. We can thus extract a subsequence that converges weakly to a function $\varphi^R\in\dot{H}^1(\mathbb{R}^4)$ satisfying
\begin{equation*}
\Vert \varphi^R\Vert_{\dot{H}^1(\mathbb{R}^4)}\lesssim 1.
\end{equation*}
Hence, extracting a further subsequence, we may assume that $\varphi^R\rightharpoonup\varphi\in\dot{H}^1(\mathbb{R}^4)$ and by uniqueness of the weak limit, we see that for every $R$,
\begin{equation*}
\varphi^R(x)=\eta^4(x/R)\varphi(x).
\end{equation*}
Now, let $\gamma\in C^\infty_0(\mathbb{R}^4)$ be supported in $B(0,R/2)\subset\mathbb{R}^4$ and remark that, for $k$ large enough,
\begin{equation*}
\begin{split}
\langle f_k,\widetilde{\gamma}_{\mathcal{O}_k}\rangle_{H^1\times H^1(\mathbb{R}\times\mathbb{T}^3)}&=\langle \Pi_{-t_k,-x_k}f_k,T_{N_k}\gamma\rangle_{H^1\times H^1(\mathbb{R}\times\mathbb{T}^3)}\\
&=\langle \varphi^R,\gamma\rangle_{\dot{H}^1\times\dot{H}^1(\mathbb{R}^4)}+o_k(1)\\
&=\langle \varphi,\gamma\rangle_{\dot{H}^1\times\dot{H}^1(\mathbb{R}^4)}+o_k(1).
\end{split}
\end{equation*}
Form this and \eqref{claim11}, \eqref{claim12}, we conclude that
\begin{equation}\label{NonzeroProfile2}
\Vert\varphi\Vert_{\dot{H}^1(\mathbb{R}^4)}\gtrsim\delta
\end{equation}
and that
\begin{equation*}
g_k=f_k-\widetilde{\varphi}_{\mathcal{O}_k}
\end{equation*}
is absent from the frame $\mathcal{O}$. Now, similarly to \eqref{AdditionOfEnergies} and using Lemma \ref{EquivFrames}, we see that
\begin{equation*}
\begin{split}
\Vert g_k\Vert_{L^2}^2&=\Vert f_k\Vert_{L^2}+o_k(1)\\
\Vert \nabla g_k\Vert_{L^2}^2&=\Vert \nabla f_k\Vert_{L^2}^2+\Vert\nabla\varphi\Vert_{L^2(\mathbb{R}^4)}^2-2\langle \nabla f_k,\nabla \widetilde{\varphi}_{\mathcal{O}_k}\rangle_{L^2\times L^2}\\
&=\Vert \nabla f_k\Vert_{L^2}^2-\Vert\nabla\varphi\Vert_{L^2(\mathbb{R}^4)}^2+o_k(1).
\end{split}
\end{equation*}

(III) Now, we can finish the proof of Lemma \ref{ProfileDec1}. We define $f^0_k=f_k$. For $\alpha\ge 0$, while $\Lambda(\{f^\alpha_k\}_k)>\delta$ we proceed as follows: applying the two steps above, we get a frame $\mathcal{O}^\alpha$ and an associated profile $\widetilde{\varphi}^\alpha_{\mathcal{O}^\alpha_k}$. We then let
\begin{equation*}
f^{\alpha+1}_k=f^\alpha_k-\widetilde{\varphi}^\alpha_{\mathcal{O}^\alpha_k}.
\end{equation*}
We remark that $f^{\alpha+1}$ is absent from $\mathcal{O}^\alpha$ by definition. By induction and Lemma \ref{EquivFrames}, it is also absent from all the frames $\mathcal{O}^\beta$, $\beta\le\alpha$. Similarly, $\mathcal{O}^\alpha$ is orthogonal to all previous frames $\mathcal{O}^\beta$, $\beta\le \alpha-1$ and by induction, all frames $\mathcal{O}^\beta, \beta\le\alpha$ are pairwise orthogonal. Using \eqref{AdditionOfEnergies} inductively, we also obtain that
\begin{equation*}
\Vert A f_k\Vert_{L^2}^2=\sum_{\beta\le\alpha}\Vert A\widetilde{\varphi}_{\mathcal{O}^\beta_k}\Vert_{L^2}^2+\Vert Af^{\alpha+1}_k\Vert_{L^2}^2+o_k(1).
\end{equation*}
for $A=1$ or $A=\nabla$. In particular using also \eqref{FkBounded} and \eqref{NonzeroProfile}, \eqref{NonzeroProfile2}, we see that this procedure stops after $O(\delta^{-2})$ steps. At this points, it remains to set $R_k=f^{\alpha_{end}+1}_k$ where $\alpha_{end}$ is the last index. This finishes the proof.
\end{proof}

Now, we can deduce the complete profile decomposition

\begin{proposition}\label{PD}
Consider $\{f_k\}_k$ a sequence of functions in $H^1(\mathbb{R}\times\mathbb{T}^3)$ satisfying
\begin{equation}\label{FkBoundedPD}
\limsup_{k\to+\infty}\Vert f_k\Vert_{H^1(\mathbb{R}\times\mathbb{T}^3)}\lesssim E.
\end{equation}
There exists a sequence of profiles $\widetilde{\psi}^\alpha_{\mathcal{O}^\alpha_k}$ associated to pairwise orthogonal frames $\mathcal{O}^\alpha$ such that, after extracting a subsequence, for every $J\ge 0$
\begin{equation}\label{DecompositionPD}
f_k=\sum_{1\le \alpha\le J}\widetilde{\psi}^\alpha_{\mathcal{O}^\alpha_k}+R_k^J
\end{equation}
where $R_k^J$ is absent from the frames $\mathcal{O}^\alpha$, $\alpha\le J$ and is small in the sense that
\begin{equation}\label{smallnessPD}
\limsup_{J\to+\infty}\limsup_{k\to+\infty}\big[\sup_{N\ge 1,\,|t|\leq 1,\,x\in\mathbb{R}\times\mathbb{T}^3}N^{-1}\left\vert \left(e^{it\Delta}P_NR_k^J\right)(x)\right\vert\big]=0.
\end{equation}
Besides, we also have the following orthogonality relations
\begin{equation}\label{OrthogonalityPD}
\begin{split}
&\Vert f_k\Vert_{L^2}^2=\sum_{\beta}\Vert \psi^\beta\Vert_{L^2}^2+\Vert R_k^J\Vert_{L^2}^2+o_k(1),\\
&\Vert \nabla f_k\Vert_{L^2}^2=\sum_{\beta\le J}\Vert \nabla\psi^\beta\Vert_{L^2}^2+\sum_{\gamma\le J}\Vert\nabla_{\R^4}\psi^\gamma\Vert_{L^2(\mathbb{R}^4)}^2+\Vert\nabla R_k^J\Vert_{L^2}^2+o_k(1),\\
&\lim_{J\to+\infty}\limsup_{k\to+\infty}\left\vert\Vert f_k\Vert_{L^4}^4-\sum_{\alpha\le J}\Vert\widetilde{\varphi}^\alpha_{\mathcal{O}^\alpha_k}\Vert_{L^4}^4\right\vert=0,
\end{split}
\end{equation}
where $\{\alpha\}=\{\beta\}\cup\{\gamma\}
$, $\beta$ corresponding to Scale-1 frames and $\gamma$ to Euclidean frames.
\end{proposition}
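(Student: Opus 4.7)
The strategy is to iterate the one-profile extraction carried out in parts (I)--(II) of the proof of Lemma \ref{ProfileDec1}, combined with a diagonal argument to extract countably many profiles. Set $f^{(0)}_k:=f_k$. Having constructed profiles $\widetilde{\psi}^1_{\mathcal{O}^1_k},\dots,\widetilde{\psi}^J_{\mathcal{O}^J_k}$ corresponding to pairwise orthogonal frames and the remainder $f^{(J)}_k:=f_k-\sum_{\alpha\leq J}\widetilde{\psi}^\alpha_{\mathcal{O}^\alpha_k}$, define
\[
\delta_{J+1}:=\Lambda(\{f^{(J)}_k\}_k).
\]
If $\delta_{J+1}=0$ the construction terminates; otherwise the single-profile extraction from part (I) of the proof of Lemma \ref{ProfileDec1}, applied with parameter $\delta_{J+1}/2$ (after extracting a subsequence), produces a new frame $\mathcal{O}^{J+1}$ and a profile $\widetilde{\psi}^{J+1}_{\mathcal{O}^{J+1}_k}$ whose relevant $H^1(\R\times\T^3)$ or $\dot{H}^1(\mathbb{R}^4)$ norm is $\gtrsim \delta_{J+1}$. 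Since $f^{(J)}_k$ is absent from every earlier frame $\mathcal{O}^\beta$, $\beta\leq J$, by induction, the remark following part (II) of the proof of Lemma \ref{ProfileDec1} together with Lemma \ref{EquivFrames} forces $\mathcal{O}^{J+1}$ to be orthogonal to each $\mathcal{O}^\beta$, $\beta\leq J$, so pairwise orthogonality is preserved. A standard diagonal extraction then produces the full sequence of profiles and the decomposition \eqref{DecompositionPD}.

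The $L^2$ and $\dot{H}^1$ identities in \eqref{OrthogonalityPD} follow by iterating the corresponding relations \eqref{Orthogonality} of Lemma \ref{ProfileDec1}, using Lemma \ref{EquivFrames}(iii) to discard the vanishing $L^2$ contribution of Euclidean profiles. Summing over $\alpha$ and invoking \eqref{FkBoundedPD} yields
\[
\sum_{\alpha\geq 1}\|\widetilde{\psi}^\alpha\|^2_{H^1\text{ or }\dot{H}^1(\mathbb{R}^4)}\lesssim E^2.
\]
Combined with the lower bound $\|\widetilde{\psi}^{J+1}\|\gtrsim \delta_{J+1}$, this forces $\delta_{J+1}\to 0$ as $J\to\infty$, which is exactly the smallness claim \eqref{smallnessPD} for $R_k^J:=f^{(J)}_k$.

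For the $L^4$ identity in \eqref{OrthogonalityPD}, expand
\[
\|f_k\|_{L^4}^4=\int_{\R\times\T^3}\Bigl|\sum_{\alpha\leq J}\widetilde{\psi}^\alpha_{\mathcal{O}^\alpha_k}+R_k^J\Bigr|^4\,dx
\]
and verify that the cross terms are $o_k(1)$. For the tail, evaluating \eqref{smallnessPD} at $t=0$ gives $\sup_N N^{-1}\|P_N R_k^J\|_{L^\infty}\to 0$ as first $k\to\infty$ and then $J\to\infty$, so Lemma \ref{precsob} yields $\|R_k^J\|_{L^4}\to 0$ and, by H\"older, all cross terms containing a factor of $R_k^J$ vanish in the same double limit. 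For products of four profile factors with not-all-equal indices, the orthogonality of the frames sends the supports of the profiles to disjoint regions of phase space (in scale $N_k/M_k$, time $N_k^2|t_k-s_k|$, or space $N_k|x_k-y_k|$); approximating the profiles by smooth compactly supported functions (as in the proof of Lemma \ref{EquivFrames}(ii)) shows these integrals are $o_k(1)$. The remaining diagonal terms assemble to $\sum_{\alpha\leq J}\|\widetilde{\psi}^\alpha_{\mathcal{O}^\alpha_k}\|_{L^4}^4$, proving the claim.

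The main obstacle is the $L^4$ cross-term analysis, where one must distinguish carefully between the cases in which the two colliding frames are both Scale-$1$, both Euclidean, or of mixed type, and exploit the appropriate divergence among the three quantities $|\ln(N_k/M_k)|$, $N_k^2|t_k-s_k|$, $N_k|x_k-y_k|$; this proceeds in close analogy to the corresponding argument in \cite[Section 5]{IoPaSt}.
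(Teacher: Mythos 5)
Your proof is correct and follows essentially the same approach as the paper: iterating the profile extraction machinery of Lemma \ref{ProfileDec1} (the paper batches this by invoking that lemma with $\delta_l = 2^{-l}$, while you inline the one-profile-at-a-time step — both amount to the same iteration), deriving $\delta_J\to 0$ from the summability of the orthogonality relations, and handling the $L^4$ cross terms via Lemma \ref{EquivFrames}(ii)-type weak interaction together with the precised Sobolev inequality \eqref{PrecSob} and \eqref{smallnessPD} to control the remainder in $L^4$.
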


\begin{proof}[Proof of Proposition \ref{PD}]
We let $\delta_l=2^{-l}$ and we apply inductively Lemma \ref{ProfileDec1} to get the sequence of frames and profiles $\mathcal{O}^\alpha$, $\widetilde{\varphi}^\alpha_{\mathcal{O}^\alpha_k}$. We only need to prove the last equality in \eqref{OrthogonalityPD}. Using Lemma \ref{EquivFrames} (ii), we see that for $\alpha\ne\beta$,
\begin{equation*}
\langle |\widetilde{\varphi}^\alpha_{\mathcal{O}^\alpha_k}|,|\widetilde{\varphi}^\beta_{\mathcal{O}^\beta_k}|^3\rangle_{L^2\times L^2}
\le \left[\langle |\widetilde{\varphi}^\alpha_{\mathcal{O}^\alpha_k}|^2,|\widetilde{\varphi}^\beta_{\mathcal{O}^\beta_k}|^2\rangle_{L^2\times L^2}\right]^\frac{1}{2}\Vert\widetilde{\varphi}^\be_{\mathcal{O}^\beta_k}\Vert_{L^4}^2\le o_k(1).
\end{equation*}
Therefore, for any fixed $J$, there holds that
\begin{equation*}
\Vert \sum_{\alpha\le J}\widetilde{\varphi}^\alpha_{\mathcal{O}^\alpha_k}\Vert_{L^4}^4-\sum_{\alpha\le J}\Vert\widetilde{\varphi}^\alpha_{\mathcal{O}^\alpha_k}\Vert_{L^4}^4\lesssim_J o_k(1).
\end{equation*}
On the other hand, using that $f_k$ is bounded and $R_k^J$ are bounded in $L^4$ uniformly in $J$,
\begin{equation*}
\limsup_{k\to+\infty}\left\vert \Vert f_k\Vert_{L^4}^4-\Vert\sum_{\alpha\le J}\widetilde{\varphi}^\alpha_{\mathcal{O}^\alpha_k}\Vert_{L^4}^4\right\vert \lesssim_E \limsup_{k\to+\infty}\Vert R^J_k\Vert_{L^4}.
\end{equation*}
Since $f_k$ is bounded in $L^4$, combining
\eqref{PrecSob}, \eqref{smallnessPD} and \eqref{OrthogonalityPD}, we see that
\begin{equation*}
\lim_{J\to+\infty}\limsup_{k\to+\infty}\Vert R^J_k\Vert_{L^4}=0.
\end{equation*} This ends the proof.
\end{proof}

\section{Proof of the main theorem}\label{proofthm}

From Lemma \ref{BlowUpCriterion}, we see that to prove Theorem \ref{Main1}, it suffices to prove that solutions remain bounded in $Z$ on intervals of length at most $1$.
To obtain this, we induct on the level
\begin{equation*}
L(u)=M(u)+E(u).
\end{equation*}

Define
\begin{equation*}
\Lambda(L)=\sup\{\Vert u\Vert_{Z(I)},E(u)+M(u)\le L,\vert I\vert\le 1\}
\end{equation*}
where the supremum is taken over all strong solutions of
\eqref{eq1} of level less than or equal to $L$ and all intervals $I$ of length $\vert I\vert\le 1$. We also define
\begin{equation}\label{Lmax}
L_{max}=\sup\{L: \Lambda(L)<+\infty\}.
\end{equation}

In subsection \ref{ProofOfKP}, we prove the following key proposition:
\begin{proposition}\label{KeyProp}
Assume that $L_{max}<\infty$. Let $u_k$ be a sequence of strong solutions of \eqref{eq1} on intervals $I_k$ and $t_k$ a sequence of times in $I_k$ satisfying
\begin{equation}\label{CondForComp}
E(u_k)+M(u_k)\to L_{max},\quad \min(\Vert u_k\Vert_{Z(T_k,t_k)},\Vert u_k\Vert_{Z(t_k,T^k)})\to+\infty
\end{equation}
for some $T_k\le t_k\le T^k$, $T^k-T_k\le1$.
Then, there exists a sequence $x_k\in\mathbb{R}\times\mathbb{T}^3$ and $v\in H^1(\mathbb{R}\times\mathbb{T}^3)$ such that, up to a subsequence,
\begin{equation}\label{Compactness}
u_k(\cdot-x_k,t_k)\to v
\end{equation}
strongly in $H^1$.
\end{proposition}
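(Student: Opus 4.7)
The strategy is to apply the profile decomposition (Proposition~\ref{PD}) to $f_k:=u_k(t_k)$, build an approximate solution to \eqref{eq1} on $(T_k,T^k)$ by summing the associated nonlinear profiles, invoke the stability result (Proposition~\ref{Stabprop}), and force a contradiction with \eqref{CondForComp} unless the decomposition collapses to a single Scale-$1$ profile carrying all the mass and energy in the limit.

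Since $\{u_k(t_k)\}$ is uniformly $H^1$-bounded by conservation of mass and energy, Proposition~\ref{PD} produces, after extraction, a decomposition
$$
u_k(t_k)=\sum_{\alpha\leq J}\widetilde{\psi}^\alpha_{\mathcal{O}^\alpha_k}+R^J_k
$$
with pairwise orthogonal frames, the decoupling relations \eqref{OrthogonalityPD}, and the smallness \eqref{smallnessPD} of $R^J_k$. To each Scale-$1$ profile $(\mathcal{O}^\beta_k,\psi^\beta)$ with $\mathcal{O}^\beta_k=(1,0,x^\beta_k)$, I attach the nonlinear evolution $V^\beta$ of $\psi^\beta$ under \eqref{eq1}. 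Suppose, for contradiction, that either at least two profiles are nontrivial, or a Euclidean profile is present, or $R^J_k$ does not tend to $0$ strongly in $H^1$; then, by \eqref{OrthogonalityPD}, each Scale-$1$ profile satisfies $E(\psi^\beta)+M(\psi^\beta)<L_{\max}$, so by the very definition \eqref{Lmax} of $L_{\max}$ together with Lemma~\ref{BlowUpCriterion}, each $V^\beta$ lies in $X^1(J)$ with a uniform bound on every unit interval $J$. To each Euclidean profile $(\mathcal{O}^\gamma_k,\phi^\gamma)$ I attach the nonlinear object $U^\gamma_k$ supplied by Lemma~\ref{GEForEP}, defined on $(-2,2)$ with $X^1$-bound depending only on $E_{\R^4}(\phi^\gamma)$, and satisfying the scattering description \eqref{ScatEuclSol} outside a shrinking interval around $t^\gamma_k$.

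Gluing these pieces, I form an approximate solution on the unit interval containing $(T_k,T^k)$,
$$
\widetilde{u}_k(t):=\sum_{\beta\leq J}V^\beta(\cdot-x^\beta_k,t-t_k)+\sum_{\gamma\leq J}U^\gamma_k(t)+e^{i(t-t_k)\Delta}R^J_k,
$$
and apply Proposition~\ref{Stabprop}. The required hypotheses are: (i) uniform $X^1$- and $Z$-boundedness of $\widetilde{u}_k$, coming from the individual bounds above and the orthogonality of frames through Lemma~\ref{lem1}; (ii) closeness of Cauchy data at $t_k$, immediate from the decomposition; and (iii) smallness in $N$-norm of the residual $(i\partial_t+\Delta)\widetilde{u}_k-\widetilde{u}_k|\widetilde{u}_k|^2$. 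The residual decomposes into cross-terms between distinct profiles (small by orthogonality of frames and Lemma~\ref{lem1}), terms involving $e^{i(t-t_k)\Delta}R^J_k$ (small after first letting $k\to\infty$ and then $J\to\infty$, because the pointwise smallness \eqref{smallnessPD} together with Lemma~\ref{precsob} and Proposition~\ref{Stric2} yields $\|e^{i(t-t_k)\Delta}R^J_k\|_{Z'}\to 0$), and truncation errors for the Euclidean pieces (controlled in Lemma~\ref{step1}). Proposition~\ref{Stabprop} then furnishes a uniform bound on $\|u_k\|_{Z(T_k,T^k)}$, contradicting \eqref{CondForComp}. The only remaining scenario is a single nontrivial Scale-$1$ profile with $R^J_k\to 0$ in $H^1$; setting $v:=\psi^1$ and choosing $x_k$ as (minus) the frame translation $x^1_k$ yields \eqref{Compactness}. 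The main technical obstacle is step~(iii): the orthogonality of frames supplied by Proposition~\ref{PD} is phrased at the level of linear inner products, and must be propagated to bilinear spacetime bounds on the nonlinear profiles $V^\beta$ and $U^\gamma_k$, leaning on Lemmas~\ref{step1}--\ref{step2} for the Euclidean pieces and on Lemma~\ref{lem1} for Scale-$1$ interactions.
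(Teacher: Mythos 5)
The overall architecture of your argument — profile decomposition, nonlinear reconstruction, stability — is the right one and coincides with the paper's. However, there is a genuine gap at the heart of your step (iii), and it is precisely the point where the paper's proof departs from the "standard" Euclidean template.

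You propagate the remainder $R^J_k$ by the \emph{free flow} $e^{i(t-t_k)\Delta}R^J_k$ and include it additively in the approximate solution $\widetilde{u}_k$. Writing $S_k$ for the profile sum and $L_k:=e^{i(t-t_k)\Delta}R^J_k$, the residual $(i\partial_t+\Delta)\widetilde{u}_k-F(\widetilde{u}_k)$ then contains the term $F'(S_k)L_k=2|S_k|^2L_k+S_k^2\overline{L_k}$, which is \emph{linear} in $L_k$. The only tool available to bound this in $N(-1,1)$ is Lemma~\ref{NLEst2}, which gives
\begin{equation*}
\|S_k\overline{S_k}L_k\|_{N(I)}\lesssim \|S_k\|_{X^1}\|S_k\|_{Z'}\|L_k\|_{Z'}+\|S_k\|_{X^1}\|L_k\|_{Z'}\|S_k\|_{Z'}+\|L_k\|_{X^1}\|S_k\|_{Z'}\|S_k\|_{Z'}.
\end{equation*}
The first two terms are fine, since $\|L_k\|_{Z'(-1,1)}\to0$ as $J,k\to\infty$ by \eqref{smallnessPD} and the estimate \eqref{Rem}. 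But the third term requires $\|L_k\|_{X^1(-1,1)}\lesssim\|R^J_k\|_{H^1}$ to be small, and in the contradiction case $\|R^J_k\|_{H^1}$ does \emph{not} tend to zero — if it did, you would already be in the compactness conclusion \eqref{Compactness}. Meanwhile $\|S_k\|_{Z'}$ is merely bounded. So the claimed smallness of the residual fails. This is not a technical detail one can fill in: the refined decomposition in the proof of Lemma~\ref{NLEst2} forces the factor carrying the highest frequency into the $X^1$ norm, and $R^J_k$ may carry arbitrarily high frequencies. Nor does frequency separation rescue you through Lemma~\ref{lem1}: for a Scale-$1$ profile $S_k$ at $O(1)$ frequency against high-frequency $L_k$, the gain $\left(\tfrac{N_2}{N_1}+\tfrac{1}{N_2}\right)^\kappa$ with $N_2\sim1$ is trivial.

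The paper avoids this entirely by \emph{not} using the free flow for the remainder. Instead it defines $g^{A,J}_k$ as the solution of the linearized equation $(i\partial_t+\Delta)g-F'(U^A_{prof,k})g=0$ with data $R^J_k$ (see \eqref{bi2}), so that the offending linear-in-remainder term is cancelled identically in the residual (this is exactly the content of the paper's footnote to the estimate of the term $II$). The residual is then genuinely quadratic in $g^{A,J}_k$, and the factorization $\|g^{A,J}_k\|_{X^1}\|g^{A,J}_k\|_{Z'}$ is available. The cost is that one must prove $\limsup_J\limsup_k\|g^{A,J}_k\|_{Z(-1,1)}=0$, which is Lemma~\ref{lemm2} and is far from free: it relies on the weighted spaces $\widetilde{X}^1_\rho$, the local smoothing bound in the non-compact $x_1$-direction (Lemma~\ref{locsmo}) and the propagation estimate of Lemma~\ref{lemm3} for the linearized flow around frequency-localized pieces of the profiles. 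That machinery is specifically designed to exploit the dispersive direction of $\R\times\T^3$ and has no analogue in your write-up. You should restructure the remainder to be the linearized evolution rather than the free evolution, and then confront the proof of the analogue of Lemma~\ref{lemm2}, which is the genuine difficulty in this semiperiodic setting.
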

Assuming this proposition, we will prove the following which implies Theorem \ref{Main1}.
\begin{theorem}\label{LmaxThem}
$L_{max}=+\infty$. In particular solutions of \eqref{eq1} exist globally on $\mathbb{R}\times\mathbb{T}^3$ and Theorem \ref{Main1} is proved.
\end{theorem}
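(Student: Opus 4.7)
My plan is to argue by contradiction, applying Proposition \ref{KeyProp} twice together with the stability theory of Section \ref{localwp}. Assuming $L_{max}<\infty$, the first application produces a critical element $U$ at level $L_{max}$ whose maximal interval of existence $J$ is bounded and at both of whose endpoints $U$ blows up in the $Z$-norm. The second application, this time to shifted copies of $U$ itself near the blowup time, yields compactness that, combined with stability, extends $U$ past that time and produces a contradiction.

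By the definition \eqref{Lmax} I first extract strong solutions $u_k\in X^1(I_k)$ with $|I_k|\leq 1$, $L(u_k)\to L_{max}$, and $\Vert u_k\Vert_{Z(I_k)}\to\infty$. Since $|I_k|\leq 1$ makes $\Vert u\Vert_{Z(I_k)}^4$ an integral additive on subintervals, the intermediate value property lets me select $t_k\in I_k=(T_k,T^k)$ with $\Vert u_k\Vert_{Z(T_k,t_k)}=\Vert u_k\Vert_{Z(t_k,T^k)}\to\infty$. Proposition \ref{KeyProp} then furnishes, after subsequence and translation by $x_k\in\mathbb{R}\times\mathbb{T}^3$, a limit $v\in H^1$ with $u_k(\cdot-x_k,t_k)\to v$ in $H^1$; strong convergence and conservation force $M(v)+E(v)=L_{max}$. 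Let $U$ be the maximal strong solution of \eqref{eq1.1} with $U(0)=v$, defined on the open interval $J\ni 0$. Passing to a further subsequence with $T_k-t_k\to T_-^{**}\in[-1,0]$ and $T^k-t_k\to T_+^{**}\in[0,1]$, I claim that $J\subset[T_-^{**},T_+^{**}]$ and that $U$ blows up in $Z$-norm at each endpoint of $J=:(T_-,T_+)$. Indeed, if $J$ contained some open neighborhood $(T_-^{**}-\epsilon_0,T_+^{**}+\epsilon_0)$, then fixing a compact $J'\Subset J$ containing $[T_-^{**}-\epsilon_0/2,T_+^{**}+\epsilon_0/2]$ and applying Proposition \ref{Stabprop} with approximate solution $U$ and initial data $u_k(\cdot-x_k,t_k)\to v$ would yield $\Vert u_k\Vert_{X^1(J'+t_k)}\to\Vert U\Vert_{X^1(J')}<\infty$; since $I_k\subset J'+t_k$ for $k$ large, this contradicts $\Vert u_k\Vert_{Z(I_k)}\to\infty$. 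Maximality of $J$ at the finite endpoints $T_\pm$ then forces, by Lemma \ref{BlowUpCriterion}, $\Vert U\Vert_{Z(T_-,T_-+\eta)}=\Vert U\Vert_{Z(T_+-\eta,T_+)}=+\infty$ for every $\eta>0$.

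The heart of the argument, and the main obstacle, is to rule out the existence of such a $U$. My strategy is a second application of Proposition \ref{KeyProp}, now to the \emph{constant} sequence $u_n:=U$ with varying times $t_n\nearrow T_+$. Using the two-sided blowup of $U$, I select $T_n\in(T_-,t_n)$ and $T^n\in(t_n,T_+)$ with $T^n-T_n\leq T_+-T_-\leq 1$ and $\min(\Vert U\Vert_{Z(T_n,t_n)},\Vert U\Vert_{Z(t_n,T^n)})\to\infty$. The proposition yields translations $y_n\in\mathbb{R}\times\mathbb{T}^3$ and $w\in H^1$ such that $U(\cdot-y_n,t_n)\to w$ strongly in $H^1$. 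Let $W$ be the strong solution of \eqref{eq1.1} with $W(0)=w$; by Proposition \ref{LocTheory}, $W\in X^1(-\delta,\delta)$ for some $\delta>0$ depending only on $L_{max}$. Apply Proposition \ref{Stabprop} to the space-and-time-translated copies $V_n(x,t):=U(x-y_n,t+t_n)$, which solve \eqref{eq1.1} and satisfy $V_n(0)\to w$ in $H^1$: for $n$ large, $V_n\in X^1(-\delta,\delta)$. Untranslating, $U\in X^1(t_n-\delta,t_n+\delta)$, but $t_n\nearrow T_+$ and $\delta$ is fixed, so eventually $t_n+\delta>T_+$, contradicting the maximality of $J$. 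This rules out $L_{max}<\infty$, and Theorem \ref{Main1} follows from Lemma \ref{BlowUpCriterion}.
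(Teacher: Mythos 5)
Your overall strategy mirrors the paper's: assuming $L_{max}<\infty$, a first application of Proposition \ref{KeyProp} produces a critical element $U$ at level $L_{max}$ with a bounded maximal interval $J$, and a second application to shifted copies of $U$ near a blowup time, combined with the stability theory, extends $U$ past that time and gives the contradiction. Your Step II (selecting $t_n\nearrow T_+$, compactness $U(\cdot-y_n,t_n)\to w$, local theory for $W$ on $(-\delta,\delta)$, stability to push $U$ into $X^1(t_n-\delta,t_n+\delta)$, contradiction with maximality) is a faithful variant of the paper's argument, which phrases the same contradiction through the $Z$-norm of $U$ near $T_+$.

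There is, however, a genuine logical gap in your bound on $J$. You prove that for every $\epsilon_0>0$ the inclusion $J\supset(T_-^{**}-\epsilon_0,T_+^{**}+\epsilon_0)$ is impossible, and you conclude $J\subset[T_-^{**},T_+^{**}]$. That conclusion does not follow: ruling out a \emph{two-sided} overshoot of $[T_-^{**},T_+^{**}]$ does not exclude, say, $T_-\ge T_-^{**}$ together with $T_+=+\infty$. If $T_+=+\infty$, Lemma \ref{BlowUpCriterion} provides no blowup at the right endpoint, your constraint $T^n-T_n\le T_+-T_-\le 1$ is vacuous, and the second application of Proposition \ref{KeyProp} is unjustified. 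The fix is exactly what the paper does (``for if, for example, $(0,b+2\delta)\subset I$''): run the stability comparison one endpoint at a time. If $T_+>T_+^{**}$, choose $\epsilon_0$ with $T_+>T_+^{**}+\epsilon_0$, set $J'=(-\eta,T_+^{**}+\epsilon_0/2)\Subset J$ for small $\eta>0$, and apply Proposition \ref{Stabprop} on $J'$ with $\tilde u=U$, $t_0=0$, and $u_0=u_k(\cdot-x_k,t_k)$; since $T^k-t_k\to T_+^{**}$, one has $(t_k,T^k)\subset J'+t_k$ for $k$ large, and the stability output bounds $\Vert u_k\Vert_{Z(t_k,T^k)}$, contradicting \eqref{CondForComp}. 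This gives $T_+\le T_+^{**}$, and the symmetric argument at the left endpoint gives $T_-\ge T_-^{**}$. With both endpoints finite and $T_+-T_-\le T_+^{**}-T_-^{**}\le 1$, the remainder of your proof goes through unchanged.
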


\begin{proof}[Proof of Theorem \ref{LmaxThem}] We assume that $L_{max}<+\infty$ and we will derive a contradiction.
We proceed in several steps.

(I) Existence of a critical element. By definition, we may find $u_k$ and intervals $J_k=(T_k,T^k)$ of length less than $1$ such that
\begin{equation*}
E(u_k)+M(u_k)\to L_{max},\quad \Vert u_k\Vert_{Z(J_k)}\ge 4k.
\end{equation*}
We can then clearly find $t_k\in J_k$ such that \eqref{CondForComp} is satisfied.
Now $u_k,t_k$ satisfy the hypothesis of Proposition \ref{KeyProp}. By extracting a subsequence, we may assume that $T^k-t_k\to b$ and $t_k-T_k\to a$. We can thus find a sequence $x_k$ such that \eqref{Compactness} holds. We then define $U$ to be the solution of \eqref{eq1} with initial data $U(0)=v$.

By strong convergence, there holds that $E(U)+M(U)=E(v)+L(v)=L_{max}$. Besides, let $I=(-T_-,T^+)$ be the maximal time of existence of $U$. We claim that $I$ is bounded. Indeed, $I\subset (-a,b)$, for if, for example, $(0,b+2\delta)\subset I$, then
\begin{equation*}
\Vert U\Vert_{Z(0,b+\delta)}=M<+\infty
\end{equation*}
and applying Lemma \ref{Stabprop} with $\tilde{u}(x,t)=v$, $u(x,t)=u_k(x-x_k,t_k+t)$, we get from \eqref{output} that, for all $k$ large enough,  $u_k$ is bounded in $Z(t_k,T^k)$, which contradicts \eqref{CondForComp}. From Lemma \ref{BlowUpCriterion}, we thus conclude that
$\Vert U\Vert_{Z(0,T^+)}=+\infty$.

(II) Nonexistence of a critical element. Now we will derive a contradiction. Indeed, take $t_k<T^+$, $t_k\to T^+$. By what we saw before, there holds that
\begin{equation}\label{Contrad}
\Vert U\Vert_{Z(0,t_k)}\to+\infty,\quad \Vert U\Vert_{Z(t_k,T^+)}=+\infty.
\end{equation}
Besides, $E(U)+M(U)=L_{max}$. Consequently, we may apply Proposition \ref{KeyProp} to get that there exists $x_k$ such that $U(\cdot-x_k,t_k)\to v_\infty$. Now, let $V$ be the solution of \eqref{eq1} with initial data $v_\infty$. By local wellposedness, there exists $\delta>0$ such that
\begin{equation}\label{ConsOfLWP}
\Vert V\Vert_{Z(-2\delta,2\delta)}\le 1.
\end{equation}
Now, take $k$ sufficiently large so that $t_k\ge T^+-\delta$. Applying the stability proposition, we see that
\begin{equation*}
\Vert U\Vert_{Z(t_k,T^+)}\le \Vert V\Vert_{Z(0,\delta)}+o_k(1)\le 2
\end{equation*}
for $k$ sufficiently large. This contradicts \eqref{Contrad} and finishes the proof.
\end{proof}

\subsection{Proof of Proposition \ref{KeyProp}}\label{ProofOfKP} Without loss of generality, we may assume that $t_k=0$. We apply Proposition \ref{PD} to the sequence $\{u_k(0)\}_k$ which is indeed bounded in $H^1(\mathbb{R}\times\mathbb{T}^3)$. This way we obtain, for all $J$,
\begin{equation*}
u_k(0)=\sum_{1\le\alpha\le J}\widetilde{\psi}_{\mathcal{O}^\alpha_k}^\alpha+R^J_k.
\end{equation*}

We first consider the remainder. Using H\"older's inequality and Strichartz estimates \eqref{no0.001}, we observe that
\begin{equation}\label{Rem}
\begin{split}
\Vert e^{it\Delta}R^J_k\Vert_{Z(-1,1)}^4
&=\sum_NN^2\Vert P_Ne^{it\Delta}R^J_k\Vert_{L^4_{x,t}(\mathbb{R}\times\mathbb{T}^3\times[-1,1])}^4\\
&\lesssim \sum_N\left(N^{-1}\Vert P_Ne^{it\Delta}R^J_k\Vert_{L^\infty_{x,t}}\right)^\frac{1}{5}\left(N^\frac{11}{19}\Vert P_Ne^{it\Delta}R^J_k\Vert_{L^\frac{19}{5}_{x,t}}\right)^\frac{19}{5}\\
&\lesssim \left(\sup_NN^{-1}\Vert P_Ne^{it\Delta}R^J_k\Vert_{L^\infty_{x,t}}\right)^\frac{1}{5}\sum_N \left(N\Vert P_NR^J_k\Vert_{L^2}\right)^\frac{19}{5}\\
&\lesssim \left(\sup_NN^{-1}\Vert P_Ne^{it\Delta}R^J_k\Vert_{L^\infty_{x,t}}\right)^\frac{1}{5}\Vert R^J_k\Vert_{H^1}^\frac{19}{5}.
\end{split}
\end{equation}

\medskip

{\bf Case I}: There are no profiles. Then, taking $J$ sufficiently large, we see from \eqref{Rem} that
\begin{equation*}
\begin{split}
\Vert e^{it\Delta}u_k(0)\Vert_{Z(-1,1)}&=\Vert e^{it\Delta}R^J_k\Vert_{Z(-1,1)}\le \delta_0/2
\end{split}
\end{equation*}
for $k$ sufficiently large,
where $\delta_0=\delta_0(L_{max})$ is given in Proposition \ref{LocTheory}. Then we see from Proposition \ref{LocTheory} that $u_k$ can be extended on $(-1,1)$ and that
\begin{equation*}
\lim_{k\to+\infty}\Vert u_k\Vert_{Z(-1,1)}\le \delta_0
\end{equation*}
which contradicts \eqref{CondForComp}.

\medskip

Hence, we see that there exists at least one profile. Using Lemma \ref{EquivFrames} and passing to a subsequence, we may renormalize every Euclidean profile, that is, up to passing to an equivalent profile, we may assume that for every Euclidean frame $\mathcal{O}^\alpha$, $\mathcal{O}^\alpha\in\widetilde{\mathcal{F}}_e$.
Besides, using Lemma \ref{EquivFrames} and passing to a subsequence once again, we may assume that for every $\alpha\ne\beta$,
either $N^\alpha_k/N^\beta_k+N^\beta_k/N^\alpha_k\to+\infty$ as $k\to+\infty$ or $N^\alpha_k=N^\beta_k$ for all $k$ and in this case, either $t^\alpha_k=t^\beta_k$ as $k\to+\infty$ or $(N^\alpha_k)^2\vert t^\alpha_k-t^\beta_k\vert \to+\infty$ as $k\to+\infty$.

Now for every linear profile $\widetilde{\psi}^\alpha_{\mathcal{O}^\alpha_k}$, we define the associated nonlinear profile $U^\alpha_k$ as the maximal solution of \eqref{eq1} with initial data $U^\alpha_k(0)=\widetilde{\psi}^\alpha_{\mathcal{O}^\alpha_k}$. We can give a more precise description of each nonlinear profile.
\begin{enumerate}
\item If $\mathcal{O}^\alpha=(1,0,x_k^\alpha)_k$ is a Scale-1 frame, then letting $W^\alpha$ be the maximal strong solution with initial data $\psi^\alpha\in H^1(\mathbb{R}\times\mathbb{T}^3)$, we have that
\begin{equation*}
U^\alpha_k(t,x)=W^\alpha(t,x-x^\alpha_k).
\end{equation*}
\item If $\mathcal{O}^\alpha$ is a Euclidean frame, this is given by Lemma \ref{GEForEP}.
\end{enumerate}

From \eqref{OrthogonalityPD} we see that
\begin{equation}\label{SumOfL}
\begin{split}
&L(\alpha):=\lim_{k\to+\infty}\left(E(\widetilde{\psi}^\alpha_{\mathcal{O}^\alpha_k})+M(\widetilde{\psi}^\alpha_{\mathcal{O}^\alpha_k})\right)\in(0,L_{max}],\\
&\lim_{J\to+\infty}\big[\sum_{1\le\alpha\le J}L(\alpha)+\lim_{k\to+\infty}L(R_k^J)\big]\le L_{max}.
\end{split}
\end{equation}
The numbers $L(\alpha)$ and $\lim_{k\to+\infty}L(R_k^J)$ are all well defined up to taking a subsequence. Up to relabelling the profiles, we can assume that for all $\alpha$, $L(1)\ge L(\alpha)$.

\medskip

{\bf Case IIa:} $L(1)=L_{max}$ and there is only one Euclidean profile, that is
\begin{equation*}
u_k(0)=\widetilde{\psi}_{\mathcal{O}_k}+o_k(1)
\end{equation*}
in $H^1$ (see \eqref{SumOfL}), where $\mathcal{O}$ is a Euclidean frame. In this case, since from \eqref{ControlOnZNormForEP} the corresponding nonlinear profile $U_k$ satisfies
\begin{equation*}
\Vert U_k\Vert_{Z(-1,1)}\lesssim_{E_{\mathbb{R}^4}(\psi)} 1\quad\text{and}\quad\lim_{k\to +\infty}\Vert U_k(0)-u_k(0)\Vert_{H^1}\to 0
\end{equation*}
we may use Proposition \ref{Stabprop} to deduce that
\begin{equation*}
\Vert u_k\Vert_{Z(-1,1)}\lesssim \Vert u_k\Vert_{X^1(-1,1)}\lesssim_{L_{max}} 1
\end{equation*}
which contradicts \eqref{CondForComp}.

\medskip

{\bf Case IIb:} $L(1)=L_{max}$ and, using again \eqref{SumOfL} we have that
\begin{equation*}
u_k(0)=\widetilde{\psi}_{\mathcal{O}_k}+o_k(1)\quad\text{ in }H^1,
\end{equation*}
where $\mathcal{O}$ is a Scale-1 frame. This is precisely conclusion \eqref{Compactness}.

\medskip

{\bf Case III:}
$L(1)<L_{max}$. Then, we have that there exists $\eta>0$ such that for all $\alpha$, $L(\alpha)<L_{max}-\eta$. In this case, we have that all nonlinear profiles are global and satisfy, for any $k,\alpha$ (after extracting a subsequence)
\begin{equation*}
\Vert U^\alpha_k\Vert_{Z(-2,2)}\le 2\Lambda(L_{max}-\eta/2)\lesssim 1,
\end{equation*}
where from now on all the implicit constants are allowed to depend on $\Lambda(L_{max}-\eta/2)$. Using Proposition \ref{Stabprop} it follows that
\begin{equation}\label{BddX1}
\Vert U^\alpha_k\Vert_{X^1(-1,1)}\lesssim 1.
\end{equation}

For $J,k\geq 1$ we define
\begin{equation*}
U^J_{prof,k}:=\sum_{\al=1}^J U^\al_k.
\end{equation*}
We show first that there is a constant $Q\lesssim 1$ such that
\begin{equation}\label{bi1}
\Vert U^J_{prof,k}\Vert_{X^1(-1,1)}^2+\sum_{\al=1}^J\|U^\al_k\|_{X^1(-1,1)}^2\leq Q^2,
\end{equation}
uniformly in $J$, for all $k$ sufficiently large. Indeed, a simple fixed point argument as in section \ref{localwp} shows that there exists $\delta_0>0$ such that if
\begin{equation*}
\Vert \phi\Vert_{H^1(\mathbb{R}\times\mathbb{T}^3)}\le\delta_0
\end{equation*}
then the unique strong solution of \eqref{eq1} with initial data $\phi$ is global and satisfies
\begin{equation}\label{SmalldataCCL}
\begin{split}
\Vert u\Vert_{X^1(-2,2)}&\le 2\delta\quad\text{and}\quad\Vert u-e^{it\Delta}\phi\Vert_{X^1(-2,2)}\lesssim \Vert\phi\Vert_{H^1(\mathbb{R}\times\mathbb{T}^3)}^3.
\end{split}
\end{equation}
From \eqref{SumOfL}, we know that there are only finitely many profiles such that $L(\alpha)\geq\delta_0/2$. Without loss of generality, we may assume that for all $\alpha\ge A$, $L(\alpha)\leq\delta_0$. Using \eqref{OrthogonalityPD}, \eqref{BddX1}, and \eqref{SmalldataCCL} we then see that
\begin{equation*}
\begin{split}
&\Vert U^J_{prof,k}\Vert_{X^1(-1,1)}=\Vert \sum_{1\le\alpha\le J}U^\alpha_k\Vert_{X^1(-1,1)}\\
&\le\sum_{1\le\alpha\le A}\Vert U^\alpha_k\Vert_{X^1(-1,1)}+\Vert \sum_{A\le\alpha\le J}(U^\alpha_k-e^{it\Delta}U^\alpha_k(0))\Vert_{X^1(-1,1)}+\Vert e^{it\Delta}\sum_{A\le\alpha\le J}U^\alpha_k(0)\Vert_{X^1(-1,1)}\\
&\lesssim A+\sum_{A\le\alpha\le J}L(\alpha)^\frac{3}{2}+\Vert\sum_{A\le\alpha\le J}U^\alpha_k(0)\Vert_{H^1}\\
&\lesssim 1.
\end{split}
\end{equation*}
The bound on $\sum_{\al=1}^J\|U^\al_k\|_{X^1(-1,1)}^2$ is similar (in fact easier), which gives \eqref{bi1}.

We define $F(z)=\vert z\vert^2z$ and
\begin{equation*}
F^\prime(G)u:=2G\overline{G}u+G^2\overline{u}.
\end{equation*}
For fixed $B$ and $J$, we define $g^{B,J}_k$ to be the solution of the initial value problem
\begin{equation}\label{bi2}
\left(i\partial_t+\Delta\right)g-F^\prime(U^B_{prof,k})g=0,\qquad g(0)=R_k^J.
\end{equation}
Using \eqref{bi1}, dividing the interval $(-1,1)$ into finitely many subintervals, and using the stability theory in section \ref{localwp} (in particular Lemma \ref{NLEst2}) in each subinterval, it follows that there is a constant $Q^\prime\ge Q$ independent of $J$ and $B$ such that, for all $k\ge k_0(J,B)$ the solution $g^{B,J}_k$ is well-defined on $(-1,1)$ and
\begin{equation}\label{BoundOng}
\Vert g^{B,J}_k\Vert_{X^1(-1,1)}\le Q^\prime.
\end{equation}

For a fixed $A$ to be defined soon, we will consider the approximate solution
\begin{equation}\label{DefOfAppSol}
U^{app,J}_k:=U^A_{prof,k}+g^{A,J}_k+U^{J,A}_{prof,k}\qquad\text{ where }\qquad U^{J,A}_{prof,k}:=\sum_{\al=A+1}^JU^\al_k.
\end{equation}
We have from \eqref{bi1} and \eqref{BoundOng} that for every choice of $A$, $J$, $A\le J$, there holds that
\begin{equation*}
U^{app,J}_k(0)=u_k(0)\qquad\text{ and }\qquad\Vert U^{app,J}_k\Vert_{X^1(-1,1)}\le 4Q',
\end{equation*}
for all $k\ge k_0(J)$ large enough. The stability Proposition \ref{Stabprop} with $M\approx Q'$ provides $\epsilon_1=\epsilon_1(M)\leq 1/(Q'+K)$, $K$ sufficiently large. We now choose $A$ such that
\begin{equation}\label{bi3}
\Vert U^{J,A}_{prof,k}\Vert_{X^1(-1,1)}^2+\sum_{A<\al\leq J}\|U_k^\al\|_{X^1(-1,1)}^2\le\epsilon_1^{10}\qquad\text{ for any }J\geq A\text{ and }k\text{ sufficiently large}.
\end{equation}
This is possible, by an argument similar to the argument in the proof of \eqref{bi1}. This choice fixes the constant $A$.

To continue, we need two lemmas which are proved in the next subsections.

\begin{lemma}\label{lemm1}
Assume $\al\neq\be\in\{1,\ldots,A\}$. For every sequence of functions $g_k$ which is uniformly bounded in $X^1$, we have
\begin{equation*}
\limsup_{k\to+\infty}\Vert \tilde{U}^\alpha_k \tilde{U}^\beta_kg_k\Vert_{N(-1,1)}=0\qquad\text{ where }\tilde{U}^\alpha_k\in\{U^\al_k,\overline{U^\al_k}\},\,\,\tilde{U}^\beta_k\in\{U^\be_k,\overline{U^\be_k}\}.
\end{equation*}
\end{lemma}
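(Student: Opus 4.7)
The plan is to exploit frame orthogonality to show that, after cutting off to their essential supports, the nonlinear profiles $U^\alpha_k$ and $U^\beta_k$ are asymptotically supported in disjoint space-time regions. This forces the $L^2_{x,t}$-norm of their product to vanish as $k\to\infty$, which can then be upgraded to vanishing of the triple product in $N(-1,1)$ by duality and the multilinear bilinear-Strichartz machinery developed in the proof of Lemma \ref{NLEst2}.

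First I would construct, for each profile and each $\varepsilon>0$, a compactly supported model $V^\alpha_k$ with $\Vert U^\alpha_k-V^\alpha_k\Vert_{X^1(-1,1)}\leq \varepsilon$. For a Scale-1 frame $\mathcal{O}^\alpha=(1,0,x^\alpha_k)_k$, the profile is $U^\alpha_k(x,t)=W^\alpha(x-x^\alpha_k,t)$ for a fixed $W^\alpha\in X^1(-1,1)$; I would truncate the initial data $\psi^\alpha$ in $x_1$ via a smooth cutoff and invoke the stability Proposition \ref{Stabprop} to get a $V^\alpha_k$ whose residual tail outside $|x_1-x^\alpha_{k,1}|\leq R$ is small in $L^4_{t,x}$ (using $X^1\hookrightarrow L^4_{t,x}$ via Strichartz). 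For a Euclidean frame, Lemma \ref{GEForEP} already provides $V^\alpha_k$ essentially supported in a box of spatial radius $R/N^\alpha_k$ and time radius $T/(N^\alpha_k)^2$ around $(x^\alpha_k,t^\alpha_k)$, plus scattering tails that are close to linear evolutions of a rescaled $\dot H^1$-profile and hence controlled in $Z$-norm by Lemma \ref{step2}.

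Next I would show $\Vert V^\alpha_k V^\beta_k\Vert_{L^2_{x,t}(-1,1)}\to 0$ by case analysis matching the orthogonality alternatives in Definition \ref{DefPro}. For two Scale-1 frames, orthogonality gives $|x^\alpha_{k,1}-x^\beta_{k,1}|\to\infty$, so the $x_1$-supports of $V^\alpha_k$ and $V^\beta_k$ are disjoint for $k$ large. For two Euclidean frames of the same scale $N_k$, orthogonality produces either $N_k^2|t^\alpha_k-t^\beta_k|\to\infty$ (disjoint time supports) or $N_k|x^\alpha_k-x^\beta_k|\to\infty$ (disjoint spatial supports after rescaling). For two Euclidean frames with $N^\alpha_k/N^\beta_k\to\infty$, a Hölder estimate with the scalings $\Vert V^\alpha_k\Vert_{L^2_{x,t}}\lesssim (N^\alpha_k)^{-2}$ and $\Vert V^\beta_k\Vert_{L^\infty_{x,t}}\lesssim N^\beta_k$ yields $\Vert V^\alpha_k V^\beta_k\Vert_{L^2}\lesssim N^\beta_k/(N^\alpha_k)^2\leq (N^\alpha_k)^{-1}\to 0$. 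The mixed Scale-1/Euclidean case is handled in the same spirit: on the tiny support of the Euclidean bubble, the Scale-1 profile is essentially a bounded factor, and the small volume times the $L^2$-scaling $(N^\beta_k)^{-2}$ closes the estimate.

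Finally I would promote the $L^2$-vanishing to $N$-norm vanishing of the triple product. By the duality characterization of $N(-1,1)$ employed in the proof of Lemma \ref{NLEst2}, it suffices to bound $|\int u_0\,\tilde U^\alpha_k \tilde U^\beta_k g_k\,dx\,dt|$ for $u_0\in Y^{-1}(-1,1)$ with unit norm. Decomposing each factor into Littlewood-Paley pieces and pairing $(P_{N_1}\tilde U^\alpha_k,P_{N_2}\tilde U^\beta_k)$ against $(P_{N_0}u_0,P_{N_3}g_k)$ by Cauchy-Schwarz, the bilinear Strichartz estimate \eqref{NLEst} supplies geometric decay in the frequency ratios, which reduces the sum up to $O(\varepsilon)$ tails to a finite set of frequency tuples. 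For each fixed $N_1,N_2$, the factor $\Vert P_{N_1}U^\alpha_k P_{N_2}U^\beta_k\Vert_{L^2_{x,t}(-1,1)}$ vanishes as $k\to\infty$, since $P_{N_1},P_{N_2}$ are fixed Fourier multipliers and the unlocalized product vanishes by the previous step (after an $L^4_{t,x}$-approximation of $U^\alpha_k,U^\beta_k$ by their models $V^\alpha_k,V^\beta_k$). The main obstacle I expect is arranging the frequency pairing so that $U^\alpha_k$ and $U^\beta_k$ can be coupled together rather than with $g_k,u_0$, which I anticipate being manageable by using the symmetry of Cauchy-Schwarz and absorbing the non-resonant frequency regimes into geometrically small tails; the Scale-1 approximation step also requires some care to pass from $H^1$-approximation of the data to control of the nonlinear evolution on the full interval $(-1,1)$.
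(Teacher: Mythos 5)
Your proposal captures the correct structure at the coarse level (cutoff models for the profiles, orthogonality giving spatial/temporal separation, Hölder in the unequal-scales case), and for the scale-1/scale-1 interaction and for the interaction of two concentrated bubbles your reasoning essentially matches the paper. However, there is a genuine gap in how you handle the Euclidean scattering tails, and it is precisely the hardest part of the paper's proof.

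You write that the Euclidean profile decomposes as a concentrated bubble plus ``scattering tails that are close to linear evolutions of a rescaled $\dot H^1$-profile and hence controlled in $Z$-norm,'' and then apply the Hölder estimate $\Vert V^\alpha_k\Vert_{L^2_{x,t}}\lesssim (N^\alpha_k)^{-2}$. That scaling is correct only for the bubble, which is confined to a time slab of thickness $O((N^\alpha_k)^{-2})$. The scattering tails $\omega^{\alpha,\pm\infty}_k$ are linear waves that persist for unit time, with $\Vert\omega^{\alpha,\pm\infty}_k\Vert_{L^2_{x,t}((-1,1))}\sim (N^\alpha_k)^{-1}$, and they are small in $Z'$ but \emph{not} small in $X^1$. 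Consider the term $\omega^{\alpha,\pm\infty}_k\,\omega^\beta_k\,g_k$ with $N^\alpha_k/N^\beta_k\to\infty$. The product's $L^2_{x,t}$ norm does shrink, but the $N$-norm requires controlling a derivative, and $\Vert\nabla\omega^{\alpha,\pm\infty}_k\Vert_{L^\infty_tL^2_x}\sim 1$; a naive estimate then gives
\begin{equation*}
\Vert\nabla\omega^{\alpha,\pm\infty}_k\cdot\omega^\beta_k\Vert_{L^2_{x,t}}
\lesssim \Vert\omega^\beta_k\Vert_{L^\infty_{x,t}}\,\Vert\nabla\omega^{\alpha,\pm\infty}_k\Vert_{L^2_{x,t}(\mathrm{supp}\,\omega^\beta_k)}
\lesssim N^\beta_k\cdot (N^\beta_k)^{-1}=1,
\end{equation*}
which does not vanish. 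Your Step 3 does not rescue this either: the bilinear Strichartz gain $\left(\frac{N_3}{N_1}+\frac{1}{N_3}\right)^\kappa$ gives no smallness when the dangerous frequencies $N_1\sim N^\alpha_k$, $N_2\sim N^\beta_k$ all run to infinity together with $k$ (at different rates), and in the worst pairing $u_0,g_k$ can also sit at frequency $\sim N^\alpha_k$, killing the ratio gain.

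The paper's proof handles exactly this term, estimate \eqref{Deco9}, with an additional ingredient that your plan omits: the local smoothing / dispersive gain in the non-periodic $x_1$ direction. After projecting onto modes with nontrivial $\xi_1$ via $\widetilde{P}^1_{\theta'}$ (whose complement is small by the scattering description), the bound \eqref{m15} of Lemma \ref{locsmo} yields $\Vert P_A\widetilde{P}^1_{\theta'}\omega^{\alpha,\pm\infty}_k\Vert_{L^\infty_{x_1}L^2_{x',t}}\lesssim(\theta' A)^{-1/2}$, and this extra factor of $A^{-1/2}\sim (N^\alpha_k)^{-1/2}$ is what closes the estimate, producing $(N^\beta_k/N^\alpha_k)^{1/2}\to 0$. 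Without some local smoothing mechanism to exploit the dispersive $x_1$ direction, the interaction of a high-frequency scattering tail with a lower-frequency bubble cannot be made small, so this case needs to be added to your argument.
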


\begin{lemma}\label{lemm2}
With the notation above
\begin{equation*}
\limsup_{J\to+\infty}\limsup_{k\to+\infty}\|g_k^{A,J}\|_{Z(-1,1)}=0.
\end{equation*}
\end{lemma}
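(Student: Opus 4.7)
The plan is to reduce the desired smallness of $\|g_k^{A,J}\|_{Z(-1,1)}$ to the weak pointwise smallness of $R_k^J$ provided by \eqref{smallnessPD}, by combining an $L^4$-interpolation bound with the profile decoupling furnished by Lemma \ref{lemm1}.

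\emph{Step 1 (Interpolation bound).} The computation in \eqref{Rem} extends, using the Strichartz bound \eqref{UpEst} and standard $U^p_\Delta$ orthogonality over cubes, from free evolutions to any $u\in X^1(-1,1)$:
$$\|u\|_{Z(-1,1)}^4 \lesssim \Big(\sup_{N\ge 1,\,|t|\le 1,\,x}N^{-1}|P_N u(x,t)|\Big)^{1/5}\|u\|_{X^1(-1,1)}^{19/5}.$$
Applied to $u=e^{it\Delta}R_k^J$, this together with \eqref{smallnessPD} already gives $\lim_{J\to\infty}\limsup_{k\to\infty}\|e^{it\Delta}R_k^J\|_{Z(-1,1)}= 0$. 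Applied to $u=g_k^{A,J}$, and using the uniform a priori bound $\|g_k^{A,J}\|_{X^1(-1,1)}\le Q'$ from \eqref{BoundOng}, it therefore suffices to establish that
$$\Theta_{k}^{J}:=\sup_{N\ge 1,\,|t|\le 1,\,x}N^{-1}|P_N g_k^{A,J}(x,t)|\longrightarrow 0$$
in the iterated limit $\limsup_{J\to\infty}\limsup_{k\to\infty}$.

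\emph{Step 2 (Duhamel and profile decoupling).} In the Duhamel representation
$$g_k^{A,J}(t) = e^{it\Delta}R_k^J - i\int_0^t e^{i(t-s)\Delta}F'(U^A_{prof,k})g_k^{A,J}\,ds,$$
the linear term already contributes the desired pointwise smallness by \eqref{smallnessPD}. For the Duhamel term, expand $F'(U^A_{prof,k})g_k^{A,J}$ as a sum of trilinear pieces in $U^\alpha_k$, $U^\beta_k$ and $g_k^{A,J}$ with $1\le\alpha,\beta\le A$. By Lemma \ref{lemm1} applied with $g_k=g_k^{A,J}$ (which is uniformly bounded in $X^1$), the off-diagonal contributions ($\alpha\neq\beta$) vanish in $N(-1,1)$-norm as $k\to\infty$. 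The analysis thus reduces to the diagonal potentials $F'(U^\alpha_k)$, one profile at a time.

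\emph{Step 3 and main obstacle.} For each diagonal piece, the structure of the profile $U^\alpha_k$ is used. If $\alpha$ is a Scale-$1$ frame, then $U^\alpha_k(x,t)=W^\alpha(x-x^\alpha_k,t)$ and translation reduces the problem to a linear Schr\"odinger equation with a \emph{fixed} $X^1$-potential $F'(W^\alpha)$, on which the weak pointwise smallness of the data propagates to the full solution by a standard compactness/continuity argument. If $\alpha$ is a Euclidean frame, the rescaling of Lemmas \ref{step1}--\ref{step2} reduces the contribution to a Euclidean linearized problem with a concentrated, compactly supported rescaled potential, and the dispersive bound \eqref{Stric3} together with the local smoothing estimate of Lemma \ref{locsmo} yield the required pointwise smallness which then transfers back. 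The main obstacle is precisely Step 3: the norm $\sup_{N,t,x}N^{-1}|P_N\cdot|$ is strictly weaker than $X^1$, so it is \emph{not} automatically preserved by the linear Schr\"odinger flow with a bounded potential; it is the decoupling from Lemma \ref{lemm1} that enables the profile-by-profile reduction, where either compactness (Scale-1) or Euclidean rescaling plus dispersion (Euclidean) makes this propagation tractable.
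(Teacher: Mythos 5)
Your Steps 1--2 correctly identify two ingredients the paper also uses: the interpolation bound in the spirit of \eqref{Rem}, and the use of Lemma \ref{lemm1} to decouple the potential $F'(U^A_{prof,k})$ into single-profile pieces $F'(\omega_k^{\alpha,\theta})$ (this is the paper's Step I). However, Step 3 is a genuine gap, not merely an ``obstacle'' to be waved away. The quantity $\Theta_k^J=\sup_{N,t,x}N^{-1}|P_Ng_k^{A,J}(x,t)|$ is \emph{not} the quantity the paper propagates through the flow, and for good reason: this sup-type quantity does not behave well under the linear Schr\"odinger equation with even a fixed $X^1$ potential (there is no dispersive decay available in $\T^3$, and no ``standard compactness/continuity argument'' forces a non-concentrated datum to stay non-concentrated). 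Your claim that Lemma \ref{lemm1} makes the propagation ``tractable'' is unsupported: Lemma \ref{lemm1} gives smallness in $N(-1,1)$, i.e.\ in $X^1$, which because of the circular dependence $g_k^{A,J}\mapsto \int e^{i(t-s)\Delta}F'(\cdot)g_k^{A,J}\,ds$ cannot be used to bootstrap smallness of either $\Theta_k^J$ or of $\|g_k^{A,J}\|_Z$ without an a priori small quantity to close the estimate.

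What the paper actually does for the hard part (its Steps II--III) is substantially different. It splits the datum $R_k^J$ into three pieces according to total frequency $N$ and transverse frequency $|\xi_1|$: the piece with $|\xi_1|\lesssim\kappa N$ (handled via the local-smoothing Lemma \ref{locsmo}, which exploits the one non-compact direction), the piece with $N\lesssim\kappa^{-1}$ (handled by the $L^\infty$ smallness from \eqref{smallnessPD}), and the residual high-frequency, high-transverse-frequency piece $R_k^{J,\kappa}$. For the first two, one shows the commutator of the linear flow with the potential is small, so the solution tracks the free evolution, whose $Z$-norm is controlled by \eqref{Rem}. For $R_k^{J,\kappa}$ the key new device is the weighted norm $\widetilde{X}^1_\rho$ of \eqref{pla23}, which encodes both $N$ and the transverse scale $M$ via the weights $c_{\rho,N,M}$, together with Lemma \ref{lemm3}: the profile potentials do not significantly shift the location in Fourier space, so the flow preserves $\widetilde{X}^1_\rho$, and $R_k^{J,\kappa}$ has small $\widetilde{H}^1_\rho$-norm by construction, while $\widetilde{X}^1_\rho$ controls $Z$. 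This spectral-localization mechanism is the technical core of the lemma (flagged in the introduction as the main difference from the $\T^3$ paper), and it is entirely absent from your sketch. Without it, neither the Scale-1 nor the Euclidean branch of your Step 3 can be completed.
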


Assuming these lemmas we can complete the proof of Proposition \ref{KeyProp}. Indeed, with the choice of approximate solution as in \eqref{DefOfAppSol}, we compute
\begin{equation*}
\begin{split}
e^J_k&=\left(i\partial_t+\Delta\right)U^{app,J}_k-F(U^{app,J}_k)\\
&=-F(U^{A}_{prof,k}+g^{A,J}_k+U^{J,A}_{prof,k})+\sum_{1\le\alpha\le J}F(U^\alpha_k)+F^\prime(U^{A}_{prof,k})g^{A,J}_k\\
&=-F(U^{A}_{prof,k}+g^{A,J}_k+U^{J,A}_{prof,k})+F(U^A_{prof,k}+g^{A,J}_k)+F(U^{J,A}_{prof,k})\\
&-F(U^A_{prof,k}+g^{A,J}_k)+F(U^A_{prof,k})+F^\prime(U^{A}_{prof,k})g^{A,J}_k\\
&-F(U^A_{prof,k})+\sum_{1\le\alpha\le A}F(U^\alpha_k)\\
&-F(U^{J,A}_{prof,k})+\sum_{A<\alpha\le J}F(U^\alpha_k)\\
&=I+II+III+IV.
\end{split}
\end{equation*}

We first see from Lemma \eqref{NLEst2} and the bounds \eqref{bi1}, \eqref{BoundOng}, and \eqref{bi3} that
\begin{equation*}
\begin{split}
\Vert I\Vert_{N(-1,1)}&\lesssim \left(\Vert U^{A}_{prof,k}+g^{A,J}_k\Vert_{X^1(-1,1)}+\Vert U^{J,A}_{prof,k}\Vert_{X^1(-1,1)}\right)^2\Vert U^{J,A}_{prof,k}\Vert_{X^1(-1,1)}\le \epsilon_1/10,
\end{split}
\end{equation*}
for $k$ large enough. Now we estimate $II$ as follows\footnote{The choice of $g^{A,J}_k$ satisfying \eqref{bi2} is precisely to achieve a cancellation in this term, i.e. to avoid estimating products such as $(U^A_{prof,k})^2\overline{g^{A,J}_k}$ and $|U^A_{prof,k}|^2g^{A,J}_k$. Such products, which are linear in $g^{A,J}_k$, cannot be made small in $N(-1,1)$ by letting $J,k\to+\infty$.}
\begin{equation*}
\begin{split}
\Vert II\Vert_{N(-1,1)}&\lesssim \Vert \overline{U^A_{prof,k}}(g^{A,J}_k)^2\Vert_{N(-1,1)}+\Vert U^A_{prof,k}g^{A,J}_k\overline{g^{A,J}_k}\Vert_{N(-1,1)}+\Vert (g^{A,J}_k)^2\overline{g^{A,J}_k}\Vert_{N(-1,1)}\\
&\lesssim \Vert U^A_{prof,k}\Vert_{X^1(-1,1)}\Vert g^{A,J}_k\Vert_{X^1(-1,1)}\Vert g^{A,J}_k\Vert_{Z^\prime(-1,1)}+\Vert g^{A,J}_k\Vert_{X^1(-1,1)}\Vert g^{A,J}_k\Vert_{Z^\prime(-1,1)}^2\\
&\lesssim (Q^\prime)^2\Vert g^{A,J}_k\Vert_{Z^\prime(-1,1)}.
\end{split}
\end{equation*}
We now turn to $III$, which we estimate using Lemma \ref{lemm1}
\begin{equation*}
\Vert III\Vert_{N(-1,1)}\lesssim_A\sum_{1\le\alpha\ne\beta\le A,1\le\gamma\le A}\Vert \widetilde{U}^\alpha_k\widetilde{U}^\beta_k \widetilde{U}^\gamma_k\Vert_{N(-1,1)}\le\epsilon_1/10
\end{equation*}
provided that $k$ is sufficiently large, where $\widetilde{U}^\al_k\in\{U^\al_k,\overline{U^\al_k}\}$. We estimate $IV$ using \eqref{bi3}
\begin{equation*}
\begin{split}
\Vert IV\Vert_{N(-1,1)}&\lesssim \Vert F(U^{J,A}_{prof,k})\Vert_{N(-1,1)}+\sum_{A<\alpha\le J}\Vert F(U^\alpha_k)\Vert_{N(-1,1)}\\
&\lesssim \Vert U^{J,A}_{prof,k}\Vert_{X^1(-1,1)}^3+\sum_{A<\alpha\le J}\Vert U^\alpha_k\Vert_{X^1(-1,1)}^3\\
&\le\epsilon_1/10.
\end{split}
\end{equation*}

Adding these estimates we obtain
\begin{equation*}
\Vert e^J_k\Vert_{N(-1,1)}\le \epsilon_1/2+C(Q^\prime)^2\Vert g^{A,J}_k\Vert_{Z^\prime(-1,1)}.
\end{equation*}
Using Lemma \ref{lemm2}, we can find $J$ sufficiently large such that for all $k\ge k_0(J)$ we have
\begin{equation*}
C(Q^\prime)^2\Vert g^{A,J}_k\Vert_{Z^\prime(-1,1)}\le\epsilon_1/10.
\end{equation*}
For $k$ sufficiently large this shows that
\begin{equation*}
\Vert e^J_k\Vert_{N(-1,1)}<\epsilon_1.
\end{equation*}
We can now apply Proposition \ref{Stabprop} to conclude that $u_k$ is defined on $(-1,1)$ and that
\begin{equation*}
\limsup_{k\to+\infty}\Vert u_k\Vert_{Z(-1,1)}<+\infty.
\end{equation*}
But this contradicts \eqref{CondForComp}.

\subsection{Proof of Lemma \ref{lemm1}} We may assume $\|g_k\|_{X^1(-1,1)}\leq 1$ for any $k$. For simplicity of notation we will also assume that $\tilde{U}_k^\al=U_k^\al$ and $\tilde{U}_k^\be=U_k^\be$. All of our estimates use only the bounds in Lemma \ref{NLEst2}, the spaces $X^1,Z',N$ and spacetime norms such as $L^p_tL^q_x$, which are not sensitive to complex conjugation.

Let $I=(-1,1)$. If $\mathcal{O}^\alpha$ and $\mathcal{O}^\beta$ are Scale-1 frames, then the proof is easy.
Indeed, given $\theta>0$ there is $R_\theta=R_{\theta,\psi^\alpha,\psi^\beta}\gg 1$ such that we may decompose, for $\gamma\in\{\al,\be\}$,
\begin{equation}\label{deco1}
\begin{split}
&U^\gamma_k=\pi_{x_k}V^{\gamma,\theta}+\pi_{x_k}\rho^{\gamma,\theta}=\omega^{\gamma,\theta}_k+\rho^{\gamma,\theta}_k,\\
&\|\rho^{\gamma,\theta}\|_{X^1(I)}+\|(1-P_{\leq R_\theta})\omega^{\gamma,\theta}_k\|_{X^1(I)}\leq\theta,\qquad \|V^{\gamma,\theta}\|_{X^1(I)}\lesssim 1,\\
&|D_x^mV^{\gamma,\theta}|\leq R_\theta\mathbf{1}_{\mathcal{S}_k^{\gamma,\theta}},\qquad 0\leq|m|\leq 6,
\end{split}
\end{equation}
where
\begin{equation*}
S^{\gamma,\theta}_k:=\{(x,t)\in\R\times\T^3\times(-1,1):|x-x^\gamma_k|\leq R_\theta\}.
\end{equation*}
From Lemma \ref{NLEst2}, we then have that
\begin{equation*}
\Vert \rho^{\alpha,\theta}_kU^\beta_kg_k\Vert_{N(I)}+\Vert V^{\alpha,\theta}_k\rho^{\beta,\theta}_kg_k\Vert_{N(I)}\lesssim \theta.
\end{equation*}
Independently, since $\vert x^\alpha_k-x^\beta_k\vert\to+\infty$, it follows from the description \eqref{deco1} that
\begin{equation*}
V^{\alpha,\theta}_kV^{\beta,\theta}_kg_k\equiv 0
\end{equation*}
for $k$ sufficiently large. This completes the proof of the lemma in this case.

Now we assume that $\mathcal{O}^\gamma$ is a Euclidean frame, $\gamma\in\{\al,\be\}$. Using Lemma \ref{GEForEP}, and then applying also Lemma \ref{step1} and Lemma \ref{step2}, it follows that for any $\theta>0$ there are $T_\theta=T_{\theta,\psi^\alpha,\psi^\beta}\gg 1$ and $R_\theta=R_{\theta,\psi^\alpha,\psi^\beta}\gg T_\theta$ such that for all $k$ sufficiently large we can decompose
\begin{equation}\label{DecNP}
\begin{split}
&U^\gamma_k=V^{\gamma,\theta}_k+\rho^{\gamma,\theta}_k=\omega^{\gamma,\theta,-\infty}_k+\omega^{\gamma,\theta}_k+\omega^{\gamma,\theta,+\infty}_k+\rho^{\gamma,\theta}_k,\\
&\Vert \omega^{\gamma,\theta,\pm\infty}_k\Vert_{Z^\prime(I)}+\Vert\rho^{\gamma,\theta}_k\Vert_{X^1(I)}+\|(1-P_{\leq R_\theta N_k^\gamma}+P_{\leq R_\theta^{-1} N_k^\gamma})\omega_k^{\gamma,\theta}\|_{X^1(I)}\le\theta,\\
&\Vert \omega^{\gamma,\theta,\pm\infty}_k\Vert_{X^1(I)}+\Vert\omega^{\gamma,\theta}_k\Vert_{X^1(I)}\lesssim 1,\\
&\vert D_x^m\omega_k^{\gamma,\theta}\vert\le R_\theta (N^\gamma_k)^{1+|m|}\mathbf{1}_{\mathcal{S}^\gamma_\theta},\qquad 0\leq|m|\leq 6,\\
&\omega_k^{\gamma,\theta,\pm\infty}=\mathbf{1}_{\{\pm(t-t^\gamma_k)\ge T_{\theta}(N^\gamma_k)^{-2}\}}[e^{i(t-t_k^\gamma)\Delta}\pi_{x_k^\gamma}T_{N_k^\gamma}(\phi^{\gamma,\theta,\pm\infty})],\qquad\|\phi^{\gamma,\theta,\pm\infty}\|_{\dot{H}^1(\R^4)}\lesssim 1,\\
&\phi^{\gamma,\theta,\pm\infty}=P_{\le R_\theta}(\phi^{\gamma,\theta,\pm\infty}),\qquad \|\phi^{\gamma,\theta,\pm\infty}\|_{L^1(\R^4)}\leq R_\theta,
\end{split}
\end{equation}
where
\begin{equation*}
\mathcal{S}^{\gamma,\theta}_k:=\{(x,t)\in\R\times\T^3\times(-1,1):\vert t-t^\gamma_k\vert\le T_{\theta}(N^\gamma_k)^{-2},\,\vert x-x^\gamma_k\vert\le R_{\theta}(N^\gamma_k)^{-1}\}.
\end{equation*}

Assuming that $\mathcal{O}^\alpha$ is a Euclidean frame, we fix $\theta>0$ and decompose $U^\alpha_k$ as in \eqref{DecNP}; similarly we decompose $U^\beta_k$ as in \eqref{DecNP} if $\mathcal{O}^\beta$ is a Euclidean frame and decompose $U^\beta_k$ as in \eqref{deco1} if $\mathcal{O}^\beta$ is a Scale-1 frame. Notice that the two decompositions agree if we set $\omega^{\beta,\theta}_k=V_k^{\beta,\theta}$ and $\omega^{\beta,\theta,\pm\infty}_k=0$ if $\mathcal{O}^\beta$ is a Scale-1 frame. For simplicity of notation, we omit from now on the dependence in $\theta$.

Any term involving $\rho^\alpha_k$ can be dealt with directly, using Lemma \ref{NLEst2}, for example
\begin{equation*}
\begin{split}
\Vert \rho^{\alpha}_kU^\beta_kg_k\Vert_{N(I)}&\lesssim \Vert \rho^\alpha_k\Vert_{X^1(I)}\Vert U^\beta_k\Vert_{X^1(I)}\Vert g_k\Vert_{X^1(I)}\lesssim \theta.
\end{split}
\end{equation*}
Similarly, we observe that since $\omega^{\alpha,\pm\infty}_k$, $\omega^{\beta,\pm\infty}_k$ are small in $Z^\prime$, for $k$ large enough
\begin{equation*}
\begin{split}
\Vert &\omega^{\alpha,\pm\infty}_k\omega^{\beta,\pm\infty}_kg_k\Vert_{N(I)}\\
&\lesssim \left(\Vert\omega^{\alpha,\pm\infty}_k\Vert_{Z^\prime(I)}+\Vert\omega^{\beta,\pm\infty}_k\Vert_{Z^\prime(I)}\right)\left(\Vert \omega^{\alpha,\pm\infty}_k\Vert_{X^1(I)}+\Vert \omega^{\beta,\pm\infty}_k\Vert_{X^1(I)}\right)\Vert g_k\Vert_{X^1(I)}\\
&\lesssim\theta.
\end{split}
\end{equation*}
To estimate the remaining terms, we may assume from now on that $N^\al_k\geq N^\be_k$.

We first estimate $\omega^\alpha_k\omega^\beta_kg_k$. In view of the support properties of $\omega_k^\alpha$ and $\omega_k^\beta$ in \eqref{DecNP} and the orthogonality of $\mathcal{O}^\al$ and $\mathcal{O}^\beta$, we have
\begin{equation*}
\omega_k^\alpha\omega_k^\beta g_k=0
\end{equation*}
if $N_k^\alpha=N_k^\beta\to\infty$, for $k$ large enough. Otherwise $N^\alpha_k/N^\beta_k\to+\infty$ and we estimate
\begin{equation*}
\begin{split}
\Vert \omega^\alpha_k\omega^\beta_kg_k\Vert_{L^1_t(I,H^1)}
&\lesssim \Vert \omega^\alpha_k\omega^\beta_k\Vert_{L^1_tL^\infty_x}\Vert\,|\nabla^1 g_k|\,\Vert_{L^\infty_t(I,L^2)}+\Vert\nabla(\omega^\alpha_k\omega^\beta_k)\Vert_{L^\frac{6}{5}_tL^3_x}\Vert g_k\Vert_{L^6_{t,x}},
\end{split}
\end{equation*}
where $|\nabla^1 g_k |$ is defined in \eqref{grad}. Using \eqref{DecNP} we estimate, for $k$ large enough,
\begin{equation*}
\begin{split}
&\Vert \omega^\alpha_k\omega^\beta_k\Vert_{L^1_tL^\infty_x}\le\Vert \omega^\alpha_k\Vert_{L^1_tL^\infty_x}\Vert \omega^\beta_k\Vert_{L^\infty_{x,t}}\lesssim_{\theta} N^\beta_k/N^\alpha_k,\\
&\Vert (\nabla\omega^\alpha_k)\omega^\beta_k\Vert_{L^\frac{6}{5}_tL^3_x}\le \Vert \nabla\omega^\alpha_k\Vert_{L^\frac{6}{5}_tL^3_x}\Vert \omega^\beta_k\Vert_{L^\infty_{t,x}}\lesssim_{\theta} N^\beta_k(N^\alpha_k)^2\Vert \mathbf{1}_{\mathcal{S}^\alpha_k}\Vert_{L^\frac{6}{5}_tL^3_x}\lesssim_{\theta} N^\beta_k/N^\alpha_k,\\
&\Vert \omega^\alpha_k\nabla \omega^\beta_k\Vert_{L^\frac{6}{5}_tL^3_x}\le \Vert \omega^\alpha_k\Vert_{L^\frac{6}{5}_tL^3_x}\Vert \nabla \omega^\beta_k\Vert_{L^\infty_{t,x}}\lesssim_{\alpha,\beta,\theta} (N^\beta_k)^2N^\alpha_k\Vert \mathbf{1}_{\mathcal{S}^\alpha_k}\Vert_{L^\frac{6}{5}_tL^3_x}\lesssim_{\theta} (N^\beta_k/N^\alpha_k)^2.
\end{split}
\end{equation*}
Therefore
\begin{equation*}
\|\omega_k^\alpha\omega_k^\beta g_k\|_{N(I)}\leq\theta\qquad\text{ for }k\text{ large enough },
\end{equation*}
as desired.

We estimate now $\omega^\alpha_k\omega^{\beta,\pm\infty}_kg_k$ as before
\begin{equation}\label{ca2}
\Vert \omega^\alpha_k\omega^{\beta,\pm\infty}_kg_k\Vert_{L^1_t(I,H^1)}\lesssim \Vert \omega^\alpha_k\omega^{\beta,\pm\infty}_k\Vert_{L^1_tL^\infty_x}\Vert\,|\nabla^1 g_k|\,\Vert_{L^\infty_t(I,L^2)}+\Vert\nabla(\omega^\alpha_k\omega^{\beta,\pm\infty}_k)\Vert_{L^\frac{6}{5}_tL^3_x}\Vert g_k\Vert_{L^6_{t,x}}.
\end{equation}
If $\lim_{k\to\infty}N^\alpha_k/N^\beta_k=+\infty$ then we estimate
\begin{equation*}
\|\omega_k^\alpha\omega_k^{\beta,\pm\infty} g_k\|_{N(I)}\lesssim_\theta N^\be_k/N^\al_k\qquad\text{ for }k\text{ sufficiently large},
\end{equation*}
as before, since the bounds $\Vert \omega^\beta_k\Vert_{L^\infty_{x,t}}\lesssim_\theta N_k^\beta$ and $\Vert \nabla \omega^\beta_k\Vert_{L^\infty_{t,x}}\lesssim_\theta (N_k^\beta)^2$ still hold. Other\-wise, if $N^\al_k=N^\be_k\to\infty$ and $(N_k^\alpha)^2\vert t^\alpha_k-t^\beta_k\vert\to+\infty$, using \eqref{Stric3} and the description \eqref{DecNP} we see that, for $k$ large enough,
\begin{equation*}
\begin{split}
\Vert \omega^\alpha_k\omega^{\beta,\pm\infty}_k\Vert_{L^1_t(I,L^\infty_x)}&\le  \Vert \omega^\alpha_k\Vert_{L^1_t(I,L^\infty_x)}\Vert\omega^{\beta,\pm\infty}_k\Vert_{L^\infty_{x,t}(\{(N_k^\alpha)^2\vert t-t^\alpha_k\vert\le R_{\theta}\})}\lesssim_{\theta} (N_k^\alpha)^{-1} \vert t^\beta_k-t^\alpha_k\vert^{-\frac{1}{2}},
\end{split}
\end{equation*}
and
\begin{equation*}
\begin{split}
\Vert \nabla(\omega^\alpha_k\omega^{\beta,\pm\infty}_k)\Vert_{L^\frac{6}{5}_t(I,L^3_x)}&\le  \Vert \nabla \omega^\alpha_k\Vert_{L^\frac{6}{5}(I,L^3_x)}\Vert\omega^{\beta,\pm\infty}_k\Vert_{L^\infty(\{(N_k^\alpha)^2\vert t-t^\alpha_k\vert\le R_{\theta}\})}\\
&+\Vert \omega^\alpha_k\Vert_{L^\frac{6}{5}(I,L^3_x)}\Vert\nabla\omega^{\beta,\pm\infty}_k\Vert_{L^\infty(\{(N_k^\alpha)^2\vert t-t^\alpha_k\vert\le R_{\theta}\})}\\
&\lesssim_{\theta} (N_k^\alpha)^{-1}\vert t^\beta_k-t^\alpha_k\vert^{-\frac{1}{2}}.
\end{split}
\end{equation*}
Finally, if $N^\alpha_k\vert x^\alpha_k-x^\beta_k\vert\to+\infty$ and $t^\alpha_k=t^\beta_k, N^\al_k=N^\be_k$, then we use Lemma \ref{step1} (ii) with $\rho=0$. It follows easily that
\begin{equation*}
\Vert \omega^\alpha_k\omega^{\beta,\pm\infty}_kg_k\Vert_{N(I)}\to 0
\end{equation*}
as $k\to \infty$, as desired

Finally, it remains to prove that
\begin{equation}\label{Deco9}
\limsup_{k\to\infty}\|\omega^{\alpha,\pm\infty}_k\omega^\beta_kg_k\|_{N(I)}\lesssim\theta\qquad\text{ if }\qquad N^\alpha_k/N^\beta_k\to+\infty.
\end{equation}
We write
\begin{equation}\label{Deco9.5}
\begin{split}
\omega^{\alpha,\pm\infty}_k\omega^\beta_kg_k&=\sum_{A\geq 2^{10}}P_A\omega^{\alpha,\pm\infty}_k\cdot P_{\leq 2^{-10}A}\omega^\beta_k\cdot P_{\leq 2^{-10}A}g_k\\
&+\sum_{B\geq 1}P_{\leq 2^9B}\omega^{\alpha,\pm\infty}_k\cdot P_B\omega^\beta_k\cdot P_{\leq B}g_k\\
&+\sum_{C\geq 2}P_{\leq 2^9C}\omega^{\alpha,\pm\infty}_k\cdot P_{\leq C/2}\omega^\beta_k\cdot P_{C}g_k.
\end{split}
\end{equation}
It follows from \eqref{refin} that
\begin{equation*}
\Big\|\sum_{B\geq 2}P_{\leq 2^9B}\omega^{\alpha,\pm\infty}_k\cdot P_B\omega^\beta_k\cdot P_{\leq B}g_k\Big\|_{N(I)}\lesssim \|\omega^{\alpha,\pm\infty}_k\|_{Z'(I)}\|\omega^\beta_k\|_{X^1(I)}\|g_k\|_{Z'(I)}
\end{equation*}
and
\begin{equation*}
\Big\|\sum_{C\geq 2}P_{\leq 2^9C}\omega^{\alpha,\pm\infty}_k\cdot P_{\leq C/2}\omega^\beta_k\cdot P_{C}g_k\Big\|\lesssim \|\omega^{\alpha,\pm\infty}_k\|_{Z'(I)}\|\omega^\beta_k\|_{Z'(I)}\|g_k\|_{X^1(I)}.
\end{equation*}
Recall from \eqref{DecNP} that $\|\omega^{\alpha,\pm\infty}_k\|_{Z'(I)}\leq\theta$ and $\|\omega^\beta_k\|_{X^1(I)}+\|g_k\|_{X^1(I)}\lesssim 1$. In view of the decomposition \eqref{Deco9.5} and the last two bounds, it remains to prove that
\begin{equation}\label{Deco10}
\limsup_{k\to\infty}\Big\|\sum_{A\geq 2^{10}}P_A\omega^{\alpha,\pm\infty}_k\cdot P_{\leq 2^{-10}A}\omega^\beta_k\cdot P_{\leq 2^{-10}A}g_k\Big\|_{N(I)}\lesssim\theta\qquad\text{ if }\qquad N^\alpha_k/N^\beta_k\to+\infty.
\end{equation}

It follows from the description of $\omega_k^{\al,\pm\infty}$ in \eqref{DecNP} that there is $Q_\theta=Q_{\theta,\phi^{\al,\pm\infty}}\in 2^{\mathbb{Z}_+}$ sufficiently large and $\theta'=\theta'_{\theta,\phi^{\al,\pm\infty}}$ sufficiently small such that, for $k$ large enough,
\begin{equation*}
\begin{split}
\|P_{\leq Q_\theta^{-1} N_k^\al}(T_{N_k^\al}&\phi^{\al,\pm\infty})\|_{H^1(\R\times\T^3)}+\|(1-P_{\leq Q_\theta N_k^\al})(T_{N_k^\al}\phi^{\al,\pm\infty})\|_{H^1(\R\times\T^3)}\\
&+\|(1-\widetilde{P}_{\theta'}^1)(T_{N_k^\al}\phi^{\al,\pm\infty})\|_{H^1(\R\times\T^3)}\leq\theta,
\end{split}
\end{equation*}
where the operator $\widetilde{P}_{\theta'}^1$ is defined in \eqref{sobop}. Therefore
\begin{equation*}
\|P_{\leq Q_\theta^{-1} N_k^\al}\omega^{\al,\pm\infty}_k\|_{X^1(I)}+\|(1-P_{\leq Q_\theta N_k^\al})\omega^{\al,\pm\infty}_k\|_{X^1(I)}+\|(1-\widetilde{P}_{\theta'}^1)\omega_k^{\al,\pm\infty})\|_{X^1(I)}\lesssim\theta.
\end{equation*}
Using again \eqref{refin}, for \eqref{Deco10} it suffices to prove that if $N^\alpha_k/N^\beta_k\to+\infty$ then
\begin{equation}\label{Deco11}
\limsup_{k\to\infty}\sum_{Q_\theta^{-1}\leq A/N_k^\al\leq Q_\theta}\|P_A\widetilde{P}_{\theta'}^1\omega^{\alpha,\pm\infty}_k\cdot P_{\leq 2^{-10}A}\omega^\beta_k\cdot P_{\leq 2^{-10}A}g_k\|_{N(I)}\lesssim\theta.
\end{equation}

For $A\in [Q_\theta^{-1}N_k^\al,Q_\theta N_k^\al]$ and $k$ large enough we estimate, using \cite[Proposition 2.10]{HeTaTz2},
\begin{equation*}
\begin{split}
&\|P_A\widetilde{P}_{\theta'}^1\omega^{\alpha,\pm\infty}_k\cdot P_{\leq 2^{-10}A}\omega^\beta_k\cdot P_{\leq 2^{-10}A}g_k\|_{N(I)}\\
&\lesssim \sup_{\|f\|_{Y^{-1}(I)=1}}\Big|\int_{\R\times\T^3\times I}f\cdot P_A\widetilde{P}_{\theta'}^1\omega^{\alpha,\pm\infty}_k\cdot P_{\leq 2^{-10}A}\omega^\beta_k\cdot P_{\leq 2^{-10}A}g_k\,dxdt\Big|\\
&\lesssim \|P_A\widetilde{P}_{\theta'}^1\omega^{\alpha,\pm\infty}_k\cdot P_{\leq 2^{-10}A}\omega^\beta_k\|_{L^2(\R\times\T^3\times I)}\cdot\sup_{\|f\|_{Y^{-1}(I)=1}}\sup_{A'\sim A}\|P_{A'}f\cdot P_{\leq 2^{-10}A}g_k\|_{L^2(\R\times\T^3\times I)}.
\end{split}
\end{equation*}
Using \eqref{NLEst}
\begin{equation*}
\sup_{\|f\|_{Y^{-1}(I)=1}}\sup_{A'\sim A}\|P_{A'}f\cdot P_{\leq 2^{-10}A}g_k\|_{L^2(\R\times\T^3\times I)}\lesssim A.
\end{equation*}
Using \eqref{m15} and \eqref{DecNP}
\begin{equation*}
\begin{split}
\|P_A\widetilde{P}_{\theta'}^1\omega^{\alpha,\pm\infty}_k\cdot P_{\leq 2^{-10}A}\omega^\beta_k\|_{L^2(\R\times\T^3\times I)}&\lesssim \|P_A\widetilde{P}_{\theta'}^1\omega^{\alpha,\pm\infty}_k\|_{L^\infty_{x_1}L^2_{x',t}}\|\omega^\be_k\|_{L^2_{x_1}L^\infty_{x',t}}\\
&\lesssim_\theta (\theta'A)^{-1/2}A^{-1}(N_k^\beta)^{1/2}.
\end{split}
\end{equation*}
It follows from the last three estimates that
\begin{equation*}
\|P_A\widetilde{P}_{\theta'}^1\omega^{\alpha,\pm\infty}_k\cdot P_{\leq 2^{-10}A}\omega^\beta_k\cdot P_{\leq 2^{-10}A}g_k\|_{N(I)}\lesssim_\theta (N^k_\beta/N_k^\alpha)^{1/2}
\end{equation*}
and \eqref{Deco11} follows. This completes the proof of the lemma.

\subsection{Proof of Lemma \ref{lemm2}} Recall that we defined nonlinear profiles $U_k^\alpha$, $\al,k\in\{1,2,\ldots\}$ with $\|U_k^\alpha\|_{X^1(-2,2)}\lesssim 1$. We also fixed a number $A\lesssim 1$ and defined
\begin{equation*}
U^A_{prof,k}=\sum_{1\leq\al\leq A}U_k^\al.
\end{equation*}
Letting $F'(G)u=2G\overline{G}u+G^2\overline{u}$, for $J\geq A$ and $k\in\mathbb{Z}^\ast_+$ we define $g_k^{A,J}$ as the solution on $(-1,1)$ of the linear initial-value problem
\begin{equation}\label{pla1}
(i\partial_t+\Delta)g_k^{A,J}-F'(U^A_{prof,k})g_k^{A,J}=0,\qquad g_k^{A,J}(0)=R_k^J.
\end{equation}
We know that $g_k^{A,J}$ is well defined on $(-1,1)$ and
\begin{equation}\label{pla2}
\|g_k^{A,J}\|_{X^1(-1,1)}\lesssim 1\qquad\text{ uniformly in }J\text{ and }k.
\end{equation}
We would like to prove that
\begin{equation}\label{pla3}
\limsup_{J\to+\infty}\limsup_{k\to+\infty}\|g_k^{A,J}\|_{Z(-1,1)}=0.
\end{equation}
Recall that
\begin{equation}\label{pla4}
\begin{split}
&\|R_k^J\|_{H^1}\lesssim 1\qquad\text{ for any }J,k\geq 1,\\
&\limsup_{J\to+\infty}\limsup_{k\to+\infty}\,\sup_{N\geq 1}N^{-1}\|e^{it\Delta}P_NR_k^J\|_{L^\infty_{x,t}(\R\times\T^3\times(-1,1))}=0.
\end{split}
\end{equation}

We will make repeated use of the following simple claim, which is consequence of the stability theory on section \ref{localwp}:

{\bf{Claim:}} If $u,u',v,v',h$ satisfy
\begin{equation*}
\Vert u\Vert_{X^1(-1,1)}+\Vert u'\Vert_{X^1(-1,1)}+\Vert v\Vert_{X^1(-1,1)}+\Vert v'\Vert_{X^1(-1,1)}\le C_1,\qquad\Vert h\Vert_{N(-1,1)}<+\infty
\end{equation*}
then, for every $u_0\in H^1$, the solution of the linear initial-value problem
\begin{equation*}
\left(i\partial_t+\Delta\right)g-uvg-u'v'\overline{g}=h,\qquad h(0)=u_0
\end{equation*}
exists globally in $X^1(-1,1)$ and satisfies
\begin{equation}\label{LinearClaim}
\Vert h\Vert_{X^1(-1,1)}\lesssim_{C_1}\Vert u_0\Vert_{H^1}+\Vert h\Vert_{N(-1,1)}.
\end{equation}

We prove \eqref{pla3} in several steps.

(I) For every fixed $\theta>0$, define $h^{J,\theta}_k$ as the solution to the initial value problem
\begin{equation}\label{pla5}
\left(i\partial_t+\Delta\right)h_k^{J,\theta}-\sum_{1\le\alpha\le A}F^\prime(\omega^{\alpha,\theta}_k)h_k^{J,\theta}=0,\qquad h(0)=R^J_k
\end{equation}
where $\omega_k^{\alpha,\theta}$ are defined as in \eqref{deco1} or \eqref{DecNP}, with the convention, as in the proof of Lemma \ref{lemm1}, $\omega_k^{\alpha,\theta}=V_k^{\alpha,\theta}$ if $\mathcal{O}^\al$ is a Scale-1 frame. Then, we have that
\begin{equation}\label{pla6}
\Vert h^{J,\theta}_k\Vert_{X^1(-1,1)}\lesssim 1
\end{equation}
uniformly in $k,J,\theta$, which is a consequence of \eqref{LinearClaim}. We will prove that
\begin{equation}\label{Simpl}
\Vert g^{A,J}_k-h^{J,\theta}_k\Vert_{X^1}\lesssim\theta
\end{equation}
for all $k$ sufficiently large. In order to do this, we let $\delta=h^{J,\theta}_k-g^{A,J}_k$ and compute that
\begin{equation*}
\begin{split}
&\left(i\partial_t+\Delta\right)\delta= F^\prime(U^A_{prof})\delta-\big[F^\prime(U^A_{prof,k})-\sum_{1\le\alpha\le A}F^\prime(\omega^{\alpha,\theta}_k)\big]h^{J,\theta}_k\\
&=F^\prime(U^A_{prof})\delta-\big[F^\prime(U^A_{prof,k})-\sum_{1\le\alpha\le A}F^\prime(U^\al_k)\big]h^{J,\theta}_k+\big[\sum_{1\le\alpha\le A}F^\prime(\omega^{\alpha,\theta}_k)-F^\prime(U^\al_k)\big]h^{J,\theta}_k.
\end{split}
\end{equation*}
Consequently, using \eqref{LinearClaim},
\begin{equation*}
\begin{split}
\|\delta\|_{X^1(-1,1)}&\lesssim \Big\|\big[F^\prime(U^A_{prof,k})-\sum_{1\le\alpha\le A}F^\prime(U^\al_k)\big]h^{J,\theta}_k\Big\|_{N(-1,1)}\\
&+\Big\|\big[\sum_{1\le\alpha\le A}F^\prime(\omega^{\alpha,\theta}_k)-F^\prime(U^\al_k)\big]h^{J,\theta}_k\Big\|_{N(-1,1)}.
\end{split}
\end{equation*}
Using Lemma \ref{lemm1} and \eqref{pla6}
\begin{equation*}
\limsup_{k\to+\infty}\Big\|\big[F^\prime(U^A_{prof,k})-\sum_{1\le\alpha\le A}F^\prime(U^\al_k)\big]h^{J,\theta}_k\Big\|_{N(-1,1)}=0.
\end{equation*}
Using \eqref{pla6}, the bounds in the descriptions \eqref{deco1} and \eqref{DecNP}, and Lemma \ref{NLEst2}
\begin{equation*}
\Big\|\big[\sum_{1\le\alpha\le A}F^\prime(\omega^{\alpha,\theta}_k)-F^\prime(U^\al_k)\big]h^{J,\theta}_k\Big\|_{N(-1,1)}\lesssim\theta
\end{equation*}
for $k$ large enough. The bound \eqref{Simpl} follows. It remains to prove that, for any $\theta$ fixed,
\begin{equation}\label{pla8}
\limsup_{J\to+\infty}\limsup_{k\to+\infty}\|h_k^{J,\theta}\|_{Z(-1,1)}\lesssim\theta.
\end{equation}

(II) For $\kappa>0$ a small dyadic number, define $\widetilde{P}^1_\kappa$ as in Lemma \ref{locsmo} and let $\widetilde{P}_\kappa^2:=1-\widetilde{P}_\kappa^1$. For every $J,k$, we consider $\sigma^{J,\theta,\kappa}_k$ the solution of the linear initial-value problem
\begin{equation}\label{pla10}
\left(i\partial_t+\Delta\right)\sigma^{J,\theta,\kappa}_k-\sum_{1\le\alpha\le A}F^\prime(\omega^{\alpha,\theta}_k)\sigma^{J,\theta,\kappa}_k=0,\qquad \sigma^{J,\theta,\kappa}_k(0)=(\widetilde{P}^1_{\kappa}+\widetilde{P}_\kappa^2P_{\leq\kappa^{-1}})R^J_k.
\end{equation}
As before, the solution $\sigma^{J,\theta,\kappa}_k$ is well-defined on $(-1,1)$ and $\|\sigma^{J,\theta,\kappa}_k\|_{X^1(-1,1)}\lesssim 1$. Using \eqref{LinearClaim} we see that
\begin{equation*}
\Vert \sigma^{J,\theta,\kappa}_k-e^{it\Delta}(\widetilde{P}^1_{\kappa}+\widetilde{P}_\kappa^2P_{\leq\kappa^{-1}})R^J_k\Vert_{X^1(-1,1)}\lesssim \sum_{1\le\alpha\le A}\Vert F^\prime(\omega^{\alpha,\theta}_k)(e^{it\Delta}(\widetilde{P}^1_{\kappa}+\widetilde{P}_\kappa^2P_{\leq\kappa^{-1}})R^J_k)\Vert_{N(-1,1)}.
\end{equation*}
For $\al\in\{1,\ldots,A\}$ we estimate, using Lemma \ref{locsmo}, Lemma \ref{precsob}, and the bounds in \eqref{deco1} and \eqref{DecNP},
\begin{equation*}
\begin{split}
\Vert F^\prime(\omega^{\alpha,\theta}_k)(e^{it\Delta}\widetilde{P}^1_{\kappa}R^J_k)\Vert_{N(-1,1)}&\lesssim\|(\omega_k^{\al,\theta})^2\|_{L^2_tL^\infty_x}\|\mathbf{1}_{\mathcal{S}_k^{\alpha,\theta}}\cdot|\nabla^1(e^{it\Delta}\widetilde{P}^1_{\kappa}R^J_k)|\,\|_{L^2_{x,t}}\\
&+\|\omega_k^{\al,\theta}|\nabla\omega_k^{\al,\theta}|\,\|_{L^1_tL^4_x}\|e^{it\Delta}\widetilde{P}^1_{\kappa}R^J_k\|_{L^\infty_tL^4_x}\\
&\lesssim_\theta \kappa^{1/100}
\end{split}
\end{equation*}
provided that $J,k$ sufficiently large. Moreover
\begin{equation*}
\|F'(\omega_k^{\al,\theta})(e^{it\Delta}\widetilde{P}^2_{\kappa}P_{\leq\kappa^{-1}}R^J_k)\Vert_{N(-1,1)}\lesssim \|\,|\nabla^1(\omega_k^{\al,\theta})^2|\,\|_{L^1_tL^2_x}\|\,|\nabla^1(e^{it\Delta}\widetilde{P}^2_{\kappa}P_{\leq\kappa^{-1}}R^J_k)|\,\|_{L^{\infty}_{x,t}}\lesssim\kappa
\end{equation*}
provided that $J,k$ sufficiently large. Independently, as in \eqref{Rem} and recalling \eqref{pla4}
\begin{equation*}
\limsup_{J\to+\infty}\limsup_{k\to+\infty}\Vert e^{it\Delta}(\widetilde{P}^1_{\kappa}+\widetilde{P}_\kappa^2P_{\leq\kappa^{-1}})R^J_k\Vert_{Z(-1,1)}=0.
\end{equation*}
Consequently, we see that for every fixed $\kappa>0$,
\begin{equation}\label{Estimsigma}
\limsup_{J\to+\infty}\limsup_{k\to+\infty}\Vert \sigma^{J,\theta,\kappa}_k\Vert_{Z(-1,1)}\lesssim_{\theta}\kappa^{1/100}.
\end{equation}

(III) It remains to prove that the solution $f_k^{J,\theta,\kappa}\in X^1(-1,1)$ of the initial-value problem
\begin{equation}\label{pla20}
\left(i\partial_t+\Delta\right)f^{J,\theta,\kappa}_k-\sum_{1\le\alpha\le A}F^\prime(\omega^{\alpha,\theta}_k)f^{J,\theta,\kappa}_k=0,\qquad f^{J,\theta,\kappa}_k(0)=R^{J,\kappa}_k:=\widetilde{P}_\kappa^2(1-P_{\leq\kappa^{-1}})R^J_k,
\end{equation}
satisfies, for $\kappa\leq\kappa(\theta)$ sufficiently small and $J\geq A$
\begin{equation}\label{pla21}
\limsup_{k\to+\infty}\|f^{J,\theta,\kappa}_k\|_{Z(-1,1)}\lesssim\theta.
\end{equation}

For dyadic numbers $1\leq M\leq N$ we define
\begin{equation}\label{pla26}
\begin{split}
p_{N,M}(\xi)&:=\left[\eta^4(\xi/N)-\eta^4(2\xi/N)\right][\eta^1(\xi_1/(2M))-\eta^1(\xi_1/M)]\text{ if }M\geq 2,\\
p_{N,1}(\xi)&:=\left[\eta^4(\xi/N)-\eta^4(2\xi/N)\right]\eta^1(\xi_1/2)\\
\mathcal{F}(P_{N,M}f)(\xi)&:=p_{N,M}(\xi)(\mathcal{F}f)(\xi),\qquad P_{N,\leq a}:=\sum_{1\leq M\leq \min(N,a)}P_{N,M}.
\end{split}
\end{equation}
Given $\rho\in(0,1]$ and dyadic numbers $1\leq M\leq N$ let
\begin{equation}\label{pla20.5}
c_{\rho,N,M}:=\Big[\Big(\frac{M^2+\rho^2N^2}{N^2}\Big)+(1+\rho^2 N)^{-1}\Big]^{\delta_0/100},
\end{equation}
where $\delta_0\in(0,1/100]$ is the constant in \eqref{pla100}. For functions $f\in X^1(I)$, $I\subseteq (-1,1)$, and $g\in H^1(\R\times\T^3)$ let
\begin{equation}\label{pla23}
\begin{split}
&\|f\|_{\widetilde{X}^1_\rho(I)}:=\Big\|\sum_{N\geq 1}\sum_{1\leq M\leq N}c_{\rho,N,M}P_{N,M}f\Big\|_{X^1(I)}\\
&\|g\|_{\widetilde{H}^1_\rho(\R\times\T^3)}:=\Big\|\sum_{N\geq 1}\sum_{1\leq M\leq N}c_{\rho,N,M}P_{N,M}g\Big\|_{H^1(\R\times\T^3)}.
\end{split}
\end{equation}
Clearly, for any $\rho\in(0,1]$,
\begin{equation}\label{pla24}
\begin{split}
&\rho^{\delta_0/50}\|f\|_{X^1(I)}\lesssim\|f\|_{\widetilde{X}^1_\rho(I)}\lesssim \|f\|_{X^1(I)},\\
&\rho^{\delta_0/50}\|g\|_{H^1(\R\times\T^3)}\lesssim\|g\|_{\widetilde{H}^1_\rho(\R\times\T^3)}\lesssim \|g\|_{H^1(\R\times\T^3)}.
\end{split}
\end{equation}
On the other hand, for any $\rho\in(0,1]$
\begin{equation}\label{pla25}
\|f\|_{Z(I)}\lesssim \|f\|_{\widetilde{X}^1_\rho(I)}.
\end{equation}
Indeed, using the Strichartz estimate $\|P_{N,M}f\|_{L^{19/5}_{x,t}(I)}\lesssim N^{-11/19}\|P_{N,M}f\|_{U^3_\Delta(I,H^1)}$, see \eqref{UpEst}, and the simple bound $\|P_{N,M}f\|_{L^{\infty}_{x,t}(I)}\lesssim M^{1/2}N^{1/2}\|P_{N,M}f\|_{U^3_\Delta(I,H^1)}$, it follows that, for any dyadic numbers $1\leq M\leq N$ and any $f\in X^1(I)$, $I\subseteq (-1,1)$,
\begin{equation}\label{pla27}
\|P_{N,M}f\|_{L^4_{x,t}(I)}\lesssim (M/N)^{1/40}N^{-1/2}\|P_{N,M}f\|_{U^3_\Delta(I,H^1)}.
\end{equation}
Therefore
\begin{equation*}
\begin{split}
\|f&\|_{Z(I)}^4\lesssim \sum_{N\geq 1}N^2\|P_Nf\|_{L^4_{x,t}(I)}^4\lesssim \sum_{N\geq 1}N^2\big[\sum_{1\leq M\leq N}\|P_{N,M}f\|_{L^4_{x,t}(I)}\big]^4\\
&\lesssim \sum_{N\geq 1}\big[\sum_{1\leq M\leq N}(M/N)^{1/40}\|P_{N,M}f\|_{X^1(I)}\big]^4\lesssim\sum_{N\geq 1}\Big\|\sum_{1\leq M\leq N}(M/N)^{1/80}P_{N,M}f\Big\|_{X^1(I)}^4,
\end{split}
\end{equation*}
which proves \eqref{pla25}.

We will prove the following lemma:

\begin{lemma}\label{lemm3}
Assume $D\geq 1$ is a dyadic number, $\theta\in(0,1]$ and $f_k\in X^1(-1,1)$ is a solution of the linear equation
\begin{equation}\label{pla30}
\left(i\partial_t+\Delta\right)f_k-\sum_{1\le\alpha\le A}F^\prime((P_{\leq DN_k^\al}-P_{\leq D^{-1}N_k^\al})\omega^{\alpha,\theta}_k)f_k=0.
\end{equation}
Then, for $\rho\leq\rho_{\theta,D}$ sufficiently small, and $k\geq k_{\rho}$ sufficiently large
\begin{equation}\label{pla31}
\|f_k\|_{\widetilde{X}^1_\rho(-1,1)}\lesssim_{\theta,D}\|f_k(0)\|_{\widetilde{H}^1_\rho(\R\times\T^3)}.
\end{equation}
\end{lemma}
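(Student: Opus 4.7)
The plan is to derive \eqref{pla31} by a Duhamel-based bootstrap on the weighted norm, exploiting the sharp frequency localization of the coefficients
\begin{equation*}
\Omega_k^\alpha := (P_{\le D N_k^\alpha} - P_{\le D^{-1} N_k^\alpha})\omega_k^{\alpha,\theta}
\end{equation*}
in the spherical shell $\{D^{-1}N_k^\alpha\le |\xi|\le DN_k^\alpha\}$. First I would fix a dyadic pair $(N,M)$ with $1\le M\le N$ and apply $P_{N,M}$ to the Duhamel identity for $f_k$, which, after expanding $f_k=\sum_{N',M'}P_{N',M'}f_k$, gives
\begin{equation*}
\|P_{N,M}f_k\|_{X^1(-1,1)}\le \|P_{N,M}f_k(0)\|_{H^1}+C\sum_{\alpha,N',M'}\bigl\|P_{N,M}\bigl[F'(\Omega_k^\alpha)\,P_{N',M'}f_k\bigr]\bigr\|_{N(-1,1)}.
\end{equation*}
The goal is to bound each cross term by $c_{\rho,N',M'}K_{\theta,D,\rho}(N,M,N',M',N_k^\alpha)\,\|f_k\|_{\widetilde X^1_\rho(-1,1)}$, in such a way that, after multiplication by $c_{\rho,N,M}$ and summation over $(N,M)$, the operator defined by $K$ is bounded on $\ell^2(N,M)$ with norm $o_\rho(1)+o_k(1)$; the nonlinear contribution can then be absorbed into the left-hand side.

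The key ingredients are threefold. First, the trilinear bound \eqref{pla100} supplies the $(N_{\min}/N_{\max})^{\delta_0}$ high-low frequency gain whenever the two highest frequencies in the product are imbalanced, which controls all frequency configurations except those where $N\sim N_k^\alpha\sim N'$. Second, the local smoothing estimate of Lemma \ref{locsmo}, applied after the splitting $\Omega_k^\alpha=\widetilde P^1_\rho \Omega_k^\alpha+(1-\widetilde P^1_\rho)\Omega_k^\alpha$, yields the $x_1$-direction smoothing gain that underlies the $(1+\rho^2N)^{-1}$ term in $c_{\rho,N,M}$; the complementary piece $(1-\widetilde P^1_\rho)\Omega_k^\alpha$ is spectrally supported in $|\xi_1|\lesssim\rho N_k^\alpha$, and is handled by the pointwise bounds of the profile on $\mathcal S_k^{\alpha,\theta}$. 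Third, the explicit structure of $\omega_k^{\alpha,\theta}$ in \eqref{deco1}--\eqref{DecNP}---a smooth bump concentrated in the spacetime set $\mathcal S_k^{\alpha,\theta}$ of scale $R_\theta(N_k^\alpha)^{-1}\times T_\theta(N_k^\alpha)^{-2}$ with uniform control of derivatives---lets one estimate $L^p_tL^q_x$ norms of $|\Omega_k^\alpha|^2$ with the correct scaling, controlling the diagonal piece $N\sim N'\sim N_k^\alpha$ through Lemma \ref{NLEst2} directly.

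The main obstacle, and the raison d'être of the weight $c_{\rho,N,M}$, is the critical regime $N\sim N_k^\alpha$, $M\ll N$, i.e.\ output concentrated at trapped-geodesic frequencies, where the bilinear gain in \eqref{pla100} degenerates. In this regime one exploits that $\Omega_k^\alpha$ is spectrally supported in a shell of radius $\sim N_k^\alpha$, so that placing the output at $(N,M)$ with $M\ll N$ forces one of two scenarios: either the input factor $P_{N',M'}f_k$ must carry most of the $\xi_1$-frequency (with $M'\sim N'\sim N_k^\alpha$, hence $c_{\rho,N',M'}\sim 1$), which gives a free gain $c_{\rho,N,M}/c_{\rho,N',M'}\lesssim\rho^{\delta_0/100}$; or the output emerges from a near-cancellation in $\Omega^2\overline{f}$ or $|\Omega|^2$, in which case the local smoothing input from Lemma \ref{locsmo} applied to $\widetilde P^1_\rho\Omega_k^\alpha$ supplies the gain in $(1+\rho^2N)^{-\delta_0/100}$. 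Combining all regimes, one obtains a Schur-type bound
\begin{equation*}
\|f_k\|_{\widetilde X^1_\rho(-1,1)}\le C_{\theta,D}\|f_k(0)\|_{\widetilde H^1_\rho}+C_{\theta,D}\bigl(\rho^{\delta_0/100}+o_k(1)\bigr)\|f_k\|_{\widetilde X^1_\rho(-1,1)},
\end{equation*}
which, upon partitioning $(-1,1)$ into $O_{\theta,D}(1)$ subintervals on which the coupling constant is smaller than $1/2$, yields \eqref{pla31} for $\rho\le\rho_{\theta,D}$ sufficiently small and $k\ge k_\rho$ large enough.
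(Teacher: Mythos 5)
Your overall framework --- Duhamel iteration, division into small subintervals on which the $Z$-norm of $\omega_{k,D}^{\alpha,\theta}:=(P_{\le DN_k^\al}-P_{\le D^{-1}N_k^\al})\omega_k^{\al,\theta}$ is small, and a Schur-type summation over dyadic blocks $(N,M)\times(N',M')$ --- is indeed the skeleton of the paper's argument. But you misidentify the technical heart, and this leaves a genuine gap.

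You propose that the $\xi_1$-smoothing gain encoded in the weight $c_{\rho,N,M}$ is supplied by the local smoothing estimate of Lemma~\ref{locsmo} applied to $\widetilde P^1_\rho\Omega_k^\alpha$. This is not what the paper does, and it is not what the critical regime needs. Lemma~\ref{locsmo} (used earlier, in step (II) of the proof of Lemma~\ref{lemm2}, to handle the $\sigma$-piece) gives extra decay when the \emph{profile factor} has large $\xi_1$-frequency relative to its total frequency; here $\Omega_k^\alpha$ lives in the shell $|\xi|\sim N_k^\alpha$, so that is not the bottleneck. The dangerous configuration is instead: input block $P_{N',M'}f$ with $N'\gg N_k^\alpha$ but $M'\lesssim N_k^\alpha\ll N'$, paired with the moderate-frequency $\Omega_k^\alpha$; there the dual test function block $P_{N,M}$ is forced (by Fourier support, as in \eqref{pla59}) to also satisfy $N\sim N'$, $M\lesssim N_k^\alpha$, and the bilinear gain $(N_2/N_1+1/N_2)^{\delta_0}$ from \eqref{pla100} no longer closes the Schur test against the ratio $c_{\rho,N,M}/c_{\rho,N',M'}$. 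This is also why your description of the critical regime as $N\sim N_k^\alpha$, $M\ll N$ is off: the high--high pairing of $f$ against itself through the dual, both at frequency far above $N_k^\alpha$, is what degenerates.

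What is actually needed, and what the paper supplies, is the refined bilinear estimate \eqref{pla40},
\begin{equation*}
\|P_{N_1,M_1}u_1\cdot P_{N_2,\le M_2}u_2\|_{L^2}\lesssim N_2\Big[\Big(\tfrac{N_2}{N_1}+\tfrac1{N_2}\Big)\tfrac{M_2}{N_2}\tfrac{M_1}{N_2+M_1}\Big]^{\delta_0/2}\|P_{N_1,M_1}u_1\|_{Y^0}\|P_{N_2,M_2}u_2\|_{Y^0},
\end{equation*}
whose essential new ingredient is the interpolation estimate \eqref{pla40.5}/\eqref{pla27},
\begin{equation*}
\|P_{N,M}f\|_{L^4_{x,t}(I)}\lesssim (M/N)^{1/40}N^{-1/2}\|P_{N,M}f\|_{U^3_\Delta(I,H^1)},
\end{equation*}
obtained by interpolating the scale-invariant $L^{19/5}$ Strichartz bound from Proposition~\ref{Stric2} against the crude $L^\infty$ Bernstein bound $\lesssim M^{1/2}N^{1/2}$. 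This $(M/N)^{1/40}$ factor is the source of the gain that the weight $c_{\rho,N,M}$ is engineered to absorb; it is a refinement of the Strichartz estimate on the anisotropic Littlewood--Paley pieces $P_{N,M}$, not a local smoothing estimate. Without introducing this (or an equivalent device), the Schur sum in your proposal does not close: in the regime $N'\gg N_k^\alpha$, $M'\lesssim N_k^\alpha$, the factor $(N_k^\alpha/N')^{\delta_0}$ from \eqref{pla100} alone, together with the ratio $c_{\rho,N,M}/c_{\rho,N',M'}\approx 1$ (both weights are small and comparable in this range), does not produce a convergent Schur kernel uniformly in $N_k^\alpha$.

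So: keep your high-level structure, but replace the appeal to Lemma~\ref{locsmo} with the anisotropic Strichartz refinement \eqref{pla27} and its bilinear consequence \eqref{pla40}, and then verify, as the paper does, that the resulting gain factor can be traded against the ratio $c_{\rho,N,M}/c_{\rho,N',M'}$ defined in \eqref{pla20.5} uniformly over the dyadic shells.
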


Assuming the lemma we can complete the proof of \eqref{pla21}. Indeed, the same argument as in step I, using the bound
\begin{equation*}
\|(1-P_{\leq R_\theta N_k^\gamma}+P_{\leq R_\theta^{-1} N_k^\gamma})\omega_k^{\gamma,\theta}\|_{X^1(I)}\le\theta,
\end{equation*}
shows that if $\widetilde{f}_k^{J,\theta,\kappa}\in X^1(-1,1)$ is the solution of the linear initial-value problem
\begin{equation*}
\left(i\partial_t+\Delta\right)\widetilde{f}^{J,\theta,\kappa}_k-\sum_{1\le\alpha\le A}F^\prime((P_{\leq R_\theta N_k^\al}-P_{\leq R_\theta^{-1}N_k^\al})\omega^{\alpha,\theta}_k)\widetilde{f}^{J,\theta,\kappa}_k=0,\qquad \widetilde{f}^{J,\theta,\kappa}_k(0)=R^{J,\kappa}_k,
\end{equation*}
then
\begin{equation*}
\|f^{J,\theta,\kappa}_k-\widetilde{f}^{J,\theta,\kappa}_k\|_{X^1(-1,1)}\lesssim\theta
\end{equation*}
for $k$ large enough. In view of Lemma \ref{lemm3}, setting $\kappa=\rho^{10}$
\begin{equation*}
\|\widetilde{f}^{J,\theta,\kappa}_k\|_{\widetilde{X}_\rho^1(-1,1)}\lesssim_\theta\|R_k^{J,\kappa}\|_{\widetilde{H}^1_\rho}\lesssim_\theta \kappa^{\delta'}
\end{equation*}
for $k$ large enough and some $\delta'>0$. The desired bound \eqref{pla21} follows from \eqref{pla25}.

Therefore it remains to prove Lemma \ref{lemm3}.

\begin{proof}[Proof of Lemma \ref{lemm3}] Given an interval $I\subseteq(-1,1)$, $0\in I$, let
\begin{equation*}
\|f\|_{\widetilde{N}_\rho(I)}=\Big\|\int_a^t e^{i(t-s)\Delta}f(s)\,ds\Big\|_{\widetilde{X}_\rho^1(I)}
\end{equation*}
The standard argument of dividing an interval into sufficiently many small intervals and the fungibility of the norm $Z$ shows that it suffices to prove that there is $\rho_0=\rho_0(\theta,D)$ sufficiently small such that if $I\subseteq(-1,1)$, $0\in I$, and
\begin{equation}\label{pla35}
\sum_{1\leq\al\leq A}\|(P_{\leq DN_k^\al}-P_{\leq D^{-1}N_k^\al})\omega^{\alpha,\theta}_k\|_{Z(I)}\leq\rho_0
\end{equation}
then
\begin{equation*}
\big\|F^\prime((P_{\leq DN_k^\al}-P_{\leq D^{-1}N_k^\al})\omega^{\alpha,\theta}_k)f\big\|_{\widetilde{N}_\rho(I)}\leq 1/(2A)\|f\|_{\widetilde{X}^1_\rho(I)}
\end{equation*}
for any $\rho\in(0,\rho_0]$, $\al\in\{1,\ldots,A\}$, $f\in X^1(I)$, and $k\geq k_\rho$ sufficiently large. Letting $\omega_{k,D}^{\alpha,\theta}:=(P_{\leq DN_k^\al}-P_{\leq D^{-1}N_k^\al})\omega^{\alpha,\theta}_k$ and recalling the definition $F'(\omega_{k,D}^{\alpha,\theta})f=2\omega_{k,D}^{\alpha,\theta}\overline{\omega_{k,D}^{\alpha,\theta}}f+(\omega_{k,D}^{\alpha,\theta})^2\overline{f}$, it suffices to prove that
\begin{equation}\label{pla36}
\big\|\omega_{k,D}^{\alpha,\theta}\overline{\omega_{k,D}^{\alpha,\theta}}f\big\|_{\widetilde{N}_\rho(I)}+\big\|(\omega_{k,D}^{\alpha,\theta})^2\overline{f}\big\|_{\widetilde{N}_\rho(I)}\leq 1/(10A)\|f\|_{\widetilde{X}^1_\rho(I)}
\end{equation}
for any $\rho\leq\rho_0$, $\al\in\{1,\ldots,A\}$, $f\in X^1(I)$, and $k\geq k_\rho$ sufficiently large.

We will need a refinement of the bound \eqref{pla100}, namely
\begin{equation}\label{pla40}
\begin{split}
\|P_{N_1,M_1}&u_1\cdot P_{N_2,\leq M_2}u_2\|_{L^2(\R\times\T^3\times I)}\\
&\lesssim N_2\Big[\Big(\frac{N_2}{N_1}+\frac{1}{N_2}\Big)\frac{M_2}{N_2}\frac{M_1}{N_2+M_1}\Big]^{\delta_0/2} \|P_{N_1,M_1}u_1\|_{Y^0(I)}\|P_{N_2,M_2}u_2\|_{Y^0(I)}
\end{split}
\end{equation}
for any dyadic numbers $N_1,N_2,M_1,M_2$, $1\leq M_1\leq N_1$, $1\leq M_2\leq N_2$, $N_2\leq N_1$. This follows from the bound \eqref{pla100} and the simpler bound
\begin{equation}\label{pla40.5}
\begin{split}
\|P_CP_{N_1,M_1}u_1\cdot &P_{N_2,\leq M_2}u_2\|_{L^2(\R\times\T^3\times I)}\leq \|P_CP_{N_1,M_1}u_1\|_{L^4}\|P_{N_2,\leq M_2}u_2\|_{L^4}\\
&\lesssim N_2^{1/2}(M_1/(N_2+M_1))^{1/40}\|P_CP_{N_1,M_1}u_1\|_{U^3_\Delta(I,L^2)}\cdot \|P_{N_2,\leq M_2}u_2\|_{L^4}
\end{split}
\end{equation}
for any cube $C$ of size $N_2$ centered at some $\xi_0\in\mathbb{Z}^4$, see \eqref{pla27}.

To prove \eqref{pla36} we analyze two cases.

{\bf{Case 1.}} $N_k^\al=1$ for all $k$. In this case $\omega_{k,D}^{\alpha,\theta}=P_{\leq DN_k^\al}\omega^{\alpha,\theta}_k$. In view of the definition \eqref{pla20.5}, $\|P_{\leq \rho^{-2}}f\|_{X^1(I)}\lesssim \|f\|_{\widetilde{X}^1(I)}$. Using Lemma \ref{NLEst2} and \eqref{pla35}
\begin{equation*}
\big\|\omega_{k,D}^{\alpha,\theta}\overline{\omega_{k,D}^{\alpha,\theta}}P_{\leq\alpha^{-2}}f\big\|_{N(I)}+\big\|(\omega_{k,D}^{\alpha,\theta})^2\overline{P_{\leq\alpha^{-2}}f}\big\|_{N(I)}\lesssim_{D,\theta}\rho_0^{1/2}\|P_{\leq \rho^{-2}}f\|_{X^1(I)}\lesssim_{D,\theta}\rho_0^{1/2}\|f\|_{\widetilde{X}_\rho^1(I)}.
\end{equation*}
To estimate the contribution of $P_{>\al^{-2}}f$, by examining Fourier supports we notice that
\begin{equation*}
P_{N,M}\big[\omega_{k,D}^{\alpha,\theta}\overline{\omega_{k,D}^{\alpha,\theta}}\cdot P_{>\alpha^{-2}}f\big]=P_{N,M}\big[\omega_{k,D}^{\alpha,\theta}\overline{\omega_{k,D}^{\alpha,\theta}}\cdot \big(\sum_{N'\sim_DN,\,M'\sim_DM}P_{>\alpha^{-2}}P_{N',M'}f\big)\big].
\end{equation*}
Therefore, by definition and Lemma \ref{NLEst2}
\begin{equation*}
\begin{split}
\|\omega_{k,D}^{\alpha,\theta}\overline{\omega_{k,D}^{\alpha,\theta}}&\cdot P_{>\alpha^{-2}}f\|_{\widetilde{N}_\rho(I)}^2\sim\sum_{N\geq 1}\sum_{1\leq M\leq N}c_{\rho,N,M}^2\|P_{N,M}\big[\omega_{k,D}^{\alpha,\theta}\overline{\omega_{k,D}^{\alpha,\theta}}\cdot P_{>\alpha^{-2}}f\big]\|_{N(I)}^2\\
&\lesssim_{D,\theta}\sum_{N\geq 1}\sum_{1\leq M\leq N}\sum_{N'\sim_DN,\,M'\sim_DM}c_{\rho,N,M}^2\|\omega_{k,D}^{\alpha,\theta}\overline{\omega_{k,D}^{\alpha,\theta}}\cdot P_{>\alpha^{-2}}P_{N',M'}f\|_{N(I)}^2\\
&\lesssim_{D,\theta}\sum_{N'\geq 1}\sum_{1\leq M'\leq N'}\rho_0c_{\rho,N',M'}^2\|P_{>\alpha^{-2}}P_{N',M'}f\|_{X^1(I)}^2\\
&\lesssim _{D,\theta}\rho_0\|P_{>\alpha^{-2}}f\|_{\widetilde{X}_\rho^1(I)}^2.
\end{split}
\end{equation*}
The bound on $\big\|(\omega_{k,D}^{\alpha,\theta})^2\overline{P_{\leq\alpha^{-2}}f}\big\|_{\widetilde{N}_\rho(I)}$ is similar, and \eqref{pla36} follows in this case.
\medskip

{\bf{Case 2.}} $\lim_{k\to+\infty}N_k^\al=+\infty$. By letting $k$ large enough, in this case we may always assume that $N_k^\al\geq\rho^{-2}$. We estimate first the contribution of $P_{>2^{10}DN_k^\alpha}f$. By examining Fourier supports, we notice that
\begin{equation*}
P_{N,M}\big[\omega_{k,D}^{\alpha,\theta}\overline{\omega_{k,D}^{\alpha,\theta}}\cdot P_{>2^{10}DN_k^\alpha}f\big]=P_{N,M}\big[\omega_{k,D}^{\alpha,\theta}\overline{\omega_{k,D}^{\alpha,\theta}}\cdot \big(\sum_{N'\sim N,M'\sim M}P_{>2^{10}DN_k^\al}P_{N',M'}f\big)\big]
\end{equation*}
if $M>2^5DN_k^\al$, and
\begin{equation}\label{pla59}
\begin{split}
P_{N,\leq 2^5DN_k^\al}\big[\omega_{k,D}^{\alpha,\theta}&\overline{\omega_{k,D}^{\alpha,\theta}}\cdot P_{>2^{10}DN_k^\alpha}f\big]\\
&=P_{N,\leq 2^5DN_k^\al}\big[\omega_{k,D}^{\alpha,\theta}\overline{\omega_{k,D}^{\alpha,\theta}}\cdot \big(\sum_{N'\sim N}P_{>2^{10}DN_k^\al}P_{N',\leq 2^{10}DN_k^\al}f\big)\big].
\end{split}
\end{equation}
Estimating as in Case 1, it follows that 
\begin{equation}\label{pla60}
\Big\|\sum_{N\geq 1}\sum_{M>2^5DN_k^\al}P_{N,M}\big[\omega_{k,D}^{\alpha,\theta}\overline{\omega_{k,D}^{\alpha,\theta}}\cdot P_{>2^{10}DN_k^\alpha}f\big]\Big\|_{\widetilde{N}_\rho(I)}\lesssim_{D,\theta}\rho_0^{1/2} \|f\|_{\widetilde{X}^1_\rho}.
\end{equation}

Using \cite[Proposition 2.10]{HeTaTz2} and the estimates \eqref{pla40} and \eqref{pla40.5}, for $N'\geq 2^5DN_k^\alpha$ and $f':=P_{>2^{10}DN_k^\al}f$
\begin{equation}\label{pla50}
\begin{split}
&\big\|\omega_{k,D}^{\alpha,\theta}\overline{\omega_{k,D}^{\alpha,\theta}}P_{N',M'}f'\big\|_{N(I)}\lesssim \sup_{\|u_0\|_{Y^{-1}(I)}=1}\Big|\int_{\R\times\T^3\times I}u_0\omega_{k,D}^{\alpha,\theta}\overline{\omega_{k,D}^{\alpha,\theta}}P_{N',M'}f'\,dxdt\Big|\\
&\lesssim N'\|\omega_{k,D}^{\alpha,\theta}\cdot P_{N',M'}f'\|_{L^2(\R\times\T^3\times I)}\sup_{\|u_0\|_{Y^0(I)}=1}\|(P_{\leq 8N'}-P_{\leq N'/8})u_0\cdot \omega_{k,D}^{\alpha,\theta}\|_{L^2(\R\times\T^3\times I)}\\
&\lesssim_{D,\theta} \rho_0\Big[\Big(\frac{N_k^\al}{N'}+\frac{1}{N_k^\al}\Big)\frac{M'}{N_k^\al+M'}\Big]^{\delta_0/2} \|P_{N',M'}f'\|_{X^1(I)}.
\end{split}
\end{equation}
Using \eqref{pla50} and the definition of $c_{\rho,N',M'}$ in \eqref{pla20.5}, in particular
\begin{equation*}
(M'/N_k^\alpha)^{\delta_0/2}\lesssim_D (M'/N_k^\al)^{\delta_0/4}\cdot c_{\rho,N',M'}\Big(\frac{N_k^\al}{N'}+\frac{1}{N_k^\al}\Big)^{-\delta_0/4}\quad\text{ if }1\leq M'\leq 2^{10}DN_k^\al<N',
\end{equation*}
it follows that
\begin{equation*}
\sum_{1\leq M'\leq 2^{10}DN_k^\al} \big\|\omega_{k,D}^{\alpha,\theta}\overline{\omega_{k,D}^{\alpha,\theta}}P_{N',M'}f'\big\|_{N(I)}\lesssim _{D,\theta}\rho_0\Big(\frac{N_k^\al}{N'}+ \frac{1}{N_k^\al}\Big)^{\delta_0/4}\|P_{N'}f'\|_{\widetilde{X}_\rho^1(I)}.
\end{equation*}
Therefore, using also \eqref{pla59},
\begin{equation*}
\Big\|\sum_{N\geq 1}P_{N,\leq 2^5DN_k^\al}\big[\omega_{k,D}^{\alpha,\theta}\overline{\omega_{k,D}^{\alpha,\theta}}\cdot P_{>2^{10}DN_k^\alpha}f\big]\Big\|_{N(I)}\lesssim_{D,\theta}\rho_0 \|f\|_{\widetilde{X}^1_\rho}.
\end{equation*}
Combining with \eqref{pla60}, we derive the desired bound
\begin{equation}\label{pla65}
\|\omega_{k,D}^{\alpha,\theta}\overline{\omega_{k,D}^{\alpha,\theta}}\cdot P_{>2^{10}DN_k^\alpha}f\|_{\widetilde{N}_\rho(I)}\lesssim_{D,\theta}\rho_0^{1/2} \|f\|_{\widetilde{X}^1_\rho}.
\end{equation}

To bound the contribution of $P_{\leq 2^{10}DN_k^\alpha}$, we use \cite[Proposition 2.10]{HeTaTz2}, the assumption \eqref{pla35}, and the description \eqref{DecNP}
\begin{equation*}
\begin{split}
&\big\|\omega_{k,D}^{\alpha,\theta}\overline{\omega_{k,D}^{\alpha,\theta}}P_{\leq 2^{10}DN_k^\al}f\big\|_{N(I)}\lesssim \sup_{\|u_0\|_{Y^{-1}(I)}=1}\Big|\int_{\R\times\T^3\times I}u_0\omega_{k,D}^{\alpha,\theta}\overline{\omega_{k,D}^{\alpha,\theta}}P_{\leq 2^{10}DN_k^\al}f\,dxdt\Big|\\
&\lesssim \|\omega_{k,D}^{\alpha,\theta}\cdot P_{\leq 2^{10}DN_k^\al}f\|_{L^2(\R\times\T^3\times I)}\sup_{\|u_0\|_{Y^{-1}(I)}=1}\|P_{\leq 2^{20}DN_k^\al}u_0\cdot \omega_{k,D}^{\alpha,\theta}\|_{L^2(\R\times\T^3\times I)}\\
&\lesssim_{D,\theta}\rho_0 (N_k^\al)^{-1/2}\|P_{\leq 2^{10}DN_k^\al}f\|_{L^4_tL^\infty_x}\lesssim_{D,\theta}\rho_0 \|f\|_{\widetilde{X}^1_\rho}.
\end{split}
\end{equation*}
The main bound \eqref{pla36} for the term $\omega_{k,D}^{\alpha,\theta}\overline{\omega_{k,D}^{\alpha,\theta}}f$ follows, by combining with \eqref{pla65}. The estimate on the other term is similar, which completes the proof of the lemma.
\end{proof}

\section{Proof of Proposition \ref{Stric2}}\label{lastsection}

The main ingredient in the proof of Proposition \ref{Stric2} is the following distributional inequality:

\begin{lemma}\label{Distr2}
Assume $p_0>18/5$, $N\geq 1$, $\lambda\in[N^{(2p_0-6)/(p_0-2)},2^{10}N^2]$, $\|m\|_{L^2(\R\times\mathbb{Z}^3)}\leq 1$, and $m(\xi)=0$ for $|\xi|>N$, then
\begin{equation}\label{nomain}
\Big|\{(x,t)\in\R\times\mathbb{T}^3\times[-2^{-10},2^{-10}]:\Big|\int_{\R\times\mathbb{Z}^3}m(\xi)e^{-it|\xi|^2}e^{ix\cdot\xi}\,d\xi\Big|\geq\lambda\}\Big|\lesssim N^{2p_0-6}\lambda^{-p_0}.
\end{equation}
\end{lemma}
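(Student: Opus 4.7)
The distributional bound \eqref{nomain} is a restricted weak-type Strichartz estimate; by layer-cake combined with the trivial $L^2$-Strichartz (that is, Plancherel), it implies the strong-type bound of Proposition \ref{Stric2} with at worst a logarithmic loss that can be absorbed by a slight adjustment of the exponent. I would prove it by adapting Bourgain's circle-method analysis for the pure Schr\"odinger propagator on $\T^3$ in \cite{Bo2} to the semiperiodic setting, exploiting the dispersion in the $\R$-direction to lower the critical Lebesgue exponent from $4$ (Bourgain's $\T^3$-threshold) down to $18/5$.

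Writing $\xi=(\xi_1,\xi')\in\R\times\Z^3$ and $x=(x_1,x')\in\R\times\T^3$, I would factor
\begin{equation*}
F(x,t):=\int m(\xi)e^{-it|\xi|^2+ix\cdot\xi}\,d\xi=\int_{\R} e^{-it\xi_1^2+ix_1\xi_1}H(\xi_1,x',t)\,d\xi_1,
\end{equation*}
where $H(\xi_1,x',t):=\sum_{\xi'\in\Z^3}m(\xi_1,\xi')e^{-it|\xi'|^2+ix'\cdot\xi'}$ is, for each fixed $\xi_1$, a Schr\"odinger evolution on $\T^3$ whose data is spectrally supported in $|\xi'|\le N$ with $\ell^2$-norm $\|m(\xi_1,\cdot)\|_{\ell^2}$. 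Two ingredients then enter: \emph{(i)} Bourgain's distributional bound from \cite{Bo2}, proved by decomposing $t$ into major arcs around rationals $a/q$ with $q\lesssim N$ and using the Gauss sum bound $|S(a,q)|\lesssim q^{3/2}$; this yields
\begin{equation*}
|\{(x',t)\in\T^3\times[-1,1]:|H(\xi_1,x',t)|\ge\mu\}|\lesssim N^{q-3}\mu^{-q}\|m(\xi_1,\cdot)\|_{\ell^2}^{q}
\end{equation*}
for any $q>4$ in the appropriate range of $\mu$; and \emph{(ii)} the $L^6_{x_1,t}$-Strichartz inequality for $e^{it\partial_1^2}$ on $L^2(\R)$ (sharp in one space dimension), which will control the outer $\xi_1$-integral.

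The last step is to combine (i) and (ii). Since $p_0\ge 2$, Minkowski's inequality gives $\|F\|_{L^{p_0}_{x,t}}\le \bigl\|\,\|F\|_{L^{p_0}_{x_1}}\bigr\|_{L^{p_0}_{x',t}}$, and the inner norm is bounded using the one-dimensional dispersive estimate applied to $\xi_1\mapsto H(\xi_1,x',t)$. Balancing the $L^6$-gain in the $\R$-direction against Bourgain's $L^{4+}$-gain on $\T^3$ produces exactly the critical Strichartz exponent $p_0=18/5$ for $\R\times\T^3$. The main obstacle is that estimate (i) is a level-set statement with sharp numerology rather than a clean $L^q$-inequality, so its tensoring with the $\R$-dispersion must be performed at the level of distribution functions---most likely via a $TT^\ast$- and Marcinkiewicz-type orthogonality argument---in order to avoid losing a factor of $\log N$. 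The restriction $\lambda\ge N^{(2p_0-6)/(p_0-2)}$ in \eqref{nomain} singles out the range in which this nontrivial level-set bound actually improves upon the elementary ones that follow from Plancherel ($\|F\|_{L^2}\lesssim 1$) and from $\|F\|_{L^\infty}\lesssim N^2$; outside this range the estimate is automatic.
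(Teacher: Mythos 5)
Your overall framing is right — this is a restricted weak-type Strichartz bound proved via a $TT^\ast$/duality argument plus the circle method, the extra gain over the $\mathbb{T}^4$ situation coming from the Euclidean dispersion in $x_1$, and the lower restriction on $\lambda$ marks the regime where this improves on Plancherel. The paper's proof begins exactly as you suggest: fix $f$ supported on $S_\lambda$ with $|f|\le\mathbf{1}_{S_\lambda}$ and bound $\lambda^2|S_\lambda|^2$ by $\iint f(x,t)\overline{f(y,s)}K_N(x-y,t-s)$. But the mechanism you propose after that point has a genuine gap.

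The factorization $F(x,t)=\int_{\R}e^{-it\xi_1^2+ix_1\xi_1}H(\xi_1,x',t)\,d\xi_1$ does \emph{not} set up a situation in which the one-dimensional $L^6_{x_1,t}$-Strichartz estimate is applicable. For fixed $(x',t)$, the $x_1$-variable Fourier integral with the quadratic phase $e^{-it\xi_1^2}$ is the free $\R$-propagator at the \emph{single} time $t$ applied to the function $\check H(\cdot,x',t)$; that gives, by Plancherel, an $L^2_{x_1}=L^2_{\xi_1}$ identity, or by the fixed-time dispersive estimate an $L^\infty_{x_1}\lesssim |t|^{-1/2}L^1_{x_1}$ bound, but \emph{not} an $L^6$ gain. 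The $L^6$-Strichartz estimate only arises after integrating $t$ over a full interval against a time-independent datum, whereas here the ``datum'' $H(\cdot,x',t)$ moves with $t$ under the $\T^3$-flow. Because the time variable is shared between the two factors of the propagator, $\|F\|_{L^{p_0}_{x_1}}$ at fixed $(x',t)$ does not decouple into a $\T^3$-piece times an $\R$-piece; your Minkowski step followed by ``the one-dimensional dispersive estimate'' yields only fixed-time bounds that, upon integrating in $t$, do not reproduce the $18/5$ threshold. The reference to a ``$TT^\ast$- and Marcinkiewicz-type orthogonality argument'' is a placeholder for precisely the step where the proof would have to be re-engineered.

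What actually does the work in the paper is that the \emph{kernel} $K_N(x,t)$ (not the solution $F$) factors over the spatial coordinates: it is the product of three one-dimensional periodic Weyl sums and one Euclidean Gaussian integral, all evaluated at the same $t$; see \eqref{no1}. On the major arc $t/(2\pi)=a/q+\beta$, Bourgain's Weyl-sum bound \eqref{no2} controls each periodic factor by $N/(\sqrt{q}(1+N|\beta|^{1/2}))$, while the Euclidean factor contributes an \emph{additional} decay $N/(1+N|t|^{1/2})$; the product is the pointwise bound \eqref{no4}. The circle method is then applied once, to decompose the time interval into major arcs (via the functions $p_{k,j}$ and $e$ in \eqref{no5}), and the kernel is split as $K_N=K_N^{1,\lambda}+K_N^{2,\lambda}+K_N^{3,\lambda}$ (Lemma \ref{nodecomp}): the first piece is small in $L^\infty$ of physical space, the second small in $L^\infty$ of Fourier space, the third small in $L^r$ of Fourier space. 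The $p_0>18/5$ condition comes out in \eqref{no73}: one needs $2^{l/2}2^{3K/2}\lesssim\lambda^2$, which is exactly $5p_0-18>0$. So the extra $\R$-dispersion enters at the pointwise-kernel level inside a single circle-method decomposition, not as a separable $L^6$-Strichartz factor — and the $18/5$ threshold is an artifact of that kernel arithmetic, not of interpolating $L^{4+}$ on $\T^3$ against $L^6$ on $\R$.

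In short: your starting point and endpoint are correct, but the intermediate step — tensoring a $\T^3$ level-set bound against a one-dimensional Strichartz inequality — does not go through as stated, because the time variable cannot be decoupled between the two factors of the propagator. The fix (and the paper's route) is to carry out the circle-method decomposition of the time interval once, on the full four-dimensional kernel, and absorb the $\R$-direction decay as an extra multiplicative factor in the resulting pointwise kernel bound.
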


It is not hard to see that Proposition \ref{Stric2} follows from Lemma \ref{Distr2}:

\begin{proof}[Proof of Proposition \ref{Stric2}] Letting
\begin{equation*}
F(x,t):=\int_{\R\times\mathbb{Z}^3}m(\xi)e^{-it|\xi|^2}e^{ix\cdot\xi}\,d\xi,
\end{equation*}
where $m$ is as in Lemma \ref{Distr2}, it suffices to prove that if $p>p_1$ and $N\geq 1$ then
\begin{equation}\label{bv1}
\|\mathbf{1}_{[-2^{-10},2^{-10}]}(t)\cdot F\|_{L^p(\R\times\mathbb{T}^3\times\mathbb{R})}\lesssim_p N^{2-6/p}.
\end{equation}
We may assume $p\in (p_1,4]$ and $N\gg 1$. Then
\begin{equation*}
\begin{split}
\|\mathbf{1}_{[-2^{-10},2^{-10}]}(t)&\cdot F\|_{L^p(\R\times\mathbb{T}^3\times\mathbb{R})}^p\\
&\leq\sum_{2^l\leq 2^{10}N^2}2^{pl}|\{(x,t)\in\R\times\mathbb{T}^3\times[-2^{-10},2^{-10}]:|F(x,t)|\geq 2^l\}|.
\end{split}
\end{equation*}
If $2^l\geq N^{(2p_0-6)/(p_0-2)}$, $p_0\in (18/5,p)$, we use the distributional inequality \eqref{nomain}. If $2^l\leq N^{(2p_0-6)/(p_0-2)}$ we use the simple bound
\begin{equation*}
2^{2l}|\{(x,t)\in\R\times\mathbb{T}^3\times[-2^{-10},2^{-10}]:|F(x,t)|\geq 2^l\}|\leq \|F\|^2_{L^2(\R\times\mathbb{T}^3\times\mathbb{R})}\lesssim 1.
\end{equation*}
Therefore
\begin{equation*}
\begin{split}
\|\mathbf{1}_{[-2^{-10},2^{-10}]}(t)&\cdot F\|_{L^p(\R\times\mathbb{T}^3\times\mathbb{R})}^p\\
&\lesssim \sum_{2^l\leq N^{(2p_0-6)/(p_0-2)}}2^{(p-2)l}+\sum_{N^{(2p_0-6)/(p_0-2)}\leq 2^l\leq 2^{10}N^2}2^{pl}\cdot N^{2p_0-6}2^{-p_0l}\\
&\lesssim_p N^{2p-6},
\end{split}
\end{equation*}
which gives \eqref{bv1}.
\end{proof}

We prove Lemma \ref{Distr2} by adapting the arguments of Bourgain \cite[Section 3]{Bo2}. The proof we present here is self-contained, with the exception of the bounds \eqref{bo3}, \eqref{divisors}, and \eqref{no2}-\eqref{no3}. Let $Z_q:=\{0,\ldots,q-1\}$. We start with a lemma:

\begin{lemma}\label{decomposition}
Assume $\gamma>0$, $Q,M\in\mathbb{Z}_+^\ast$, $M\geq 8Q$, $S\subseteq\{1,\ldots,Q\}$, and $\eta:\mathbb{R}\to[0,1]$ is a smooth function supported in $[-2,2]$. Then, for any $t\in\mathbb{R}$,
\begin{equation}\label{bo1}
\sum_{q\in S,\,a\in\mathbb{Z}\,(a,q)=1}\eta(MQ(t-a/q))=\sum_{m\in\mathbb{Z}}(MQ)^{-1}\widehat{\eta}(2\pi m/(MQ))c_m e^{2\pi imt},
\end{equation}
and the coefficients $c_m$ have the properties
\begin{equation}\label{bo2}
c_m=\sum_{q\in S,\,a\in Z_q,\,(a,q)=1}e^{-2\pi im\cdot a/q},\qquad\sup_{m\in\mathbb{Z}}|c_m|\leq 4Q^2,\qquad |c_m|\leq C_\gamma d(m,Q)Q^{1+\gamma},
\end{equation}
where $d(m,Q)$ denotes the number of divisors of $m$ less than or equal to $Q$.
\end{lemma}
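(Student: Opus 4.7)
The plan is to interpret \eqref{bo1} as the Fourier series expansion on $\R/\Z$ of the left-hand side, and to derive it via Poisson summation. First I would observe that the LHS is $1$-periodic in $t$: shifting $t\mapsto t+1$ in $\eta(MQ(t-a/q))$ corresponds to reindexing $a\mapsto a+q$, which preserves the condition $(a,q)=1$. After this, the identity reduces to matching Fourier coefficients, and the bounds on $c_m$ will reduce to standard facts about Ramanujan sums.

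To compute those Fourier coefficients, I would fix $q\in S$ and write every $a\in\Z$ with $(a,q)=1$ uniquely as $a=a'+kq$ with $a'\in Z_q$, $(a',q)=1$, and $k\in\Z$. Poisson summation applied to $k\mapsto\eta(MQ(t-a'/q-k))$ (legitimate by the Schwartz decay of $\widehat{\eta}$) yields
\begin{equation*}
\sum_{k\in\Z}\eta(MQ(t-a'/q-k)) = \sum_{m\in\Z}(MQ)^{-1}\widehat{\eta}(2\pi m/(MQ))\,e^{2\pi i m(t-a'/q)}.
\end{equation*}
Summing the phase $e^{-2\pi im a'/q}$ over $a'\in Z_q$ coprime to $q$, then over $q\in S$, and collecting the factor $e^{2\pi imt}$ outside, produces \eqref{bo1} with
\begin{equation*}
c_m=\sum_{q\in S}\sum_{a'\in Z_q,\,(a',q)=1}e^{-2\pi ima'/q}.
\end{equation*}
All interchanges of summation are justified by absolute convergence, since $\widehat{\eta}(2\pi m/(MQ))$ decays faster than any polynomial in $m$ and the coefficient sum is bounded in $m$.

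For the bounds on $c_m$, the trivial estimate is immediate: $|c_m|\leq\sum_{q=1}^{Q}\phi(q)\leq Q(Q+1)/2\leq 4Q^2$. For the divisor bound, I would invoke the classical identity $c_q(m)=\mu(q/(m,q))\phi(q)/\phi(q/(m,q))$ for Ramanujan sums, which gives $|c_q(m)|\leq (m,q)$. Combined with $(m,q)=\sum_{d\mid(m,q)}\phi(d)$ and an interchange of summation,
\begin{equation*}
|c_m|\leq\sum_{q=1}^{Q}(m,q)=\sum_{d\mid m,\,d\leq Q}\phi(d)\lfloor Q/d\rfloor\leq Q\cdot d(m,Q),
\end{equation*}
which is already stronger than the claimed $C_\gamma Q^{1+\gamma}d(m,Q)$ (taking the convention $d(0,Q)=Q$ to cover the mode $m=0$). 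The main input here is the Ramanujan bound $|c_q(m)|\leq (m,q)$; everything else is bookkeeping around Poisson summation, and I expect no serious obstacle.
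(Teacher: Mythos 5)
Your proof is correct, and it follows the same overall strategy as the paper for the identity \eqref{bo1}: both reduce to computing the Fourier coefficient over $[0,1]$, and your Poisson-summation formulation is just a reorganized version of the paper's direct integration, where periodicity is used to unfold the integral over a period into an integral over $\mathbb{R}$. (The paper does not explicitly note $1$-periodicity of the left-hand side, but it is implicitly using it in the same way you do.) Where you genuinely diverge is in the last bound of \eqref{bo2}: the paper simply cites Bourgain \cite[Lemma 3.33]{Bo2} for \eqref{bo3}, whereas you give a self-contained elementary argument via the explicit evaluation $c_q(m)=\mu(q/(m,q))\phi(q)/\phi(q/(m,q))$, the consequent bound $|c_q(m)|\le (m,q)$, and the interchange $\sum_{q\le Q}(m,q)=\sum_{d\mid m,\,d\le Q}\phi(d)\lfloor Q/d\rfloor\le Q\,d(m,Q)$. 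This is not only more self-contained than the citation but actually sharper (it gives $Q\,d(m,Q)$ rather than $C_\gamma Q^{1+\gamma}d(m,Q)$), and the small discrepancy costs nothing since the paper's use of the lemma only requires the $Q^{1+\gamma}$ form. Your handling of $m=0$ (where $c_0=\sum_{q\in S}\phi(q)$) is consistent with the convention $d(0,Q)=Q$, and all interchanges are indeed harmless because for each fixed $t$ the $a$-sum is finite.
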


\begin{proof}[Proof of Lemma \ref{decomposition}]
We have
\begin{equation*}
\begin{split}
&\int_0^1e^{-2\pi imt}\sum_{q\in S,\,a\in\mathbb{Z}\,(a,q)=1}\eta(MQ(t-a/q))\,dt\\
&=\sum_{q\in S,\,a\in Z_q,\,(a,q)=1}\int_{\mathbb{R}}e^{-2\pi imt}\eta(MQ(t-a/q))\,dt\\
&=(MQ)^{-1}\widehat{\eta}(2\pi m/(MQ))\sum_{q\in S,\,a\in Z_q,\,(a,q)=1}e^{-2\pi im\cdot a/q},
\end{split}
\end{equation*}
for any $m\in\mathbb{Z}$. The first inequality in \eqref{bo2} is clear. To prove the second inequality we may assume $Q\geq 100$ and it suffice to prove that for any $m\in\mathbb{Z}$
\begin{equation}\label{bo3}
\sum_{q=1}^Q\Big|\sum_{a\in Z_q,\,(a,q)=1}e^{-2\pi im\cdot a/q}\Big|\leq C_\gamma d(m,Q)Q^{1+\gamma}.
\end{equation}
This is proved in \cite[Lemma 3.33]{Bo2}.
\end{proof}

We will also need the estimate in \cite[Lemma 3.47]{Bo2}: for any $\gamma,B>0$ there is $C_{\gamma,B}\geq 1$ such that
\begin{equation}\label{divisors}
|\{m\in\{0,\ldots,P\}:d(m,Q)\geq D\}|\leq C_{\gamma,B}(D^{-B}Q^\gamma P+Q^B)\qquad\text{ for any }P,Q,D\in\mathbb{Z}_+^\ast.
\end{equation}

We turn now to the proof of Lemma \ref{Distr2}. The implicit constants may all depend on the value of $p_0$. We may assume that $N\gg 1$ and consider the kernel $K_{N}:\R\times\mathbb{T}^3\times\mathbb{R}\to\mathbb{C}$,
\begin{equation}\label{no1}
\begin{split}
&K_{N}(x,t)=\eta^1(2^5t/(2\pi))\int_{\R\times\mathbb{Z}^3}e^{-it|\xi|^2}e^{ix\cdot\xi}\eta^4(\xi/N)\,d\xi\\
&=\eta^1(2^5t/(2\pi))\prod_{j=2}^4\Big[\sum_{\xi_j\in\mathbb{Z}}e^{-it|\xi_j|^2}e^{ix_j\xi_j}\eta^1(\xi_j/N)^2\Big]\cdot \int_{\mathbb{R}}e^{-it|\xi_1|^2}e^{ix_1\cdot\xi_1}\eta^1(\xi_1/N)^2\,d\xi_1.
\end{split}
\end{equation}

Let
\begin{equation*}
S_\lambda:=\{(x,t)\in\R\times\mathbb{T}^3\times[-2^{-10},2^{-10}]:\Big|\int_{\R\times\mathbb{Z}^3}m(\xi)e^{-it|\xi|^2}e^{ix\cdot\xi}\,d\xi\Big|\geq\lambda\}
\end{equation*}
and fix a function $f:\R\times\mathbb{T}^3\times[-2^{-10},2^{-10}]\to\mathbb{C}$ such that
\begin{equation}\label{no20}
|f|\leq\mathbf{1}_{S_\lambda}
\end{equation}
and
\begin{equation*}
\lambda|S_\lambda|\leq\Big|\int_{\R\times\mathbb{T}^3\times\mathbb{R}}f(x,t)\cdot\Big[\int_{\R\times\mathbb{Z}^3}m(\xi)e^{-it|\xi|^2}e^{ix\cdot\xi}\,d\xi\Big]\,dxdt\Big|.
\end{equation*}
Using the assumptions on $m$ we estimate the right-hand side of the inequality above by
\begin{equation*}
\Big\|\prod_{j=1}^4\eta^1(\xi_j/N)\cdot\int_{\R\times\mathbb{T}^3\times\mathbb{R}}f(x,t)e^{-it|\xi|^2}e^{ix\cdot\xi}\,dxdt\Big\|_{L^2_\xi}.
\end{equation*}
Thus
\begin{equation}\label{no21}
\lambda^2|S_\lambda|^2\leq\int_{\R\times\mathbb{T}^3\times\mathbb{R}}\int_{\R\times\mathbb{T}^3\times\mathbb{R}} f(x,t)\overline{f(y,s)}K_N(t-s,x-y)\,dtdxdsdy.
\end{equation}
Using Lemma \ref{nodecomp} below, we estimate
\begin{equation*}
\begin{split}
&\int_{\R\times\mathbb{T}^3\times\mathbb{R}}\int_{\R\times\mathbb{T}^3\times\mathbb{R}} f(x,t)\overline{f(y,s)}K_N(t-s,x-y)\,dtdxdsdy\\
&\leq\sum_{\mu\in\{1,2,3\}}\Big|\int_{\R\times\mathbb{T}^3\times\mathbb{R}}\int_{\R\times\mathbb{T}^3\times\mathbb{R}} f(x,t)\overline{f(y,s)}K_N^{\mu,\lambda}(t-s,x-y)\,dtdxdsdy\Big|\\
&\leq (\lambda^2/2)\|f\|_{L^1}^2+C\lambda^2(N^{2p_0-6}\lambda^{-p_0})\|f\|_{L^2}^2+C\lambda^2(N^{2p_0-6}\lambda^{-p_0})^{(r-1)/r}\|f\|_{L^{2r/(r+1)}}^2\\
&\leq (\lambda^2/2)|S_\lambda|^2+C\lambda^2(N^{2p_0-6}\lambda^{-p_0})|S_\lambda|+C\lambda^2(N^{2p_0-6}\lambda^{-p_0})^{(r-1)/r}|S_\lambda|^{(r+1)/r}.
\end{split}
\end{equation*}
Using \eqref{no21} it follows that
\begin{equation*}
|S_\lambda|\lesssim N^{2p_0-6}\lambda^{-p_0}+(N^{2p_0-6}\lambda^{-p_0})^{(r-1)/r}|S_\lambda|^{1/r}.
\end{equation*}
which easily gives \eqref{nomain}.

Therefore it remains to prove the following lemma:

\begin{lemma}\label{nodecomp} Assuming $\lambda\in[N^{(2p_0-6)/(p_0-2)},2^{10}N^2]$ as in Lemma \ref{Distr2} and $r\in[2,4]$, there is a decomposition
\begin{equation*}
K_N=K_N^{1,\lambda}+K_N^{2,\lambda}+K_N^{3,\lambda}
\end{equation*}
such that
\begin{equation}\label{no40}
\begin{split}
&\|K_N^{1,\lambda}\|_{L^\infty(\R\times\mathbb{T}^3\times\mathbb{R})}\leq \lambda^2/2,\\
&\|\widehat{K_N^{2,\lambda}}\|_{L^\infty(\R\times\mathbb{Z}^3\times\mathbb{R})}\lesssim \lambda^2(N^{2p_0-6}\lambda^{-p_0}),\\
&\|\widehat{K_N^{3,\lambda}}\|_{L^r(\R\times\mathbb{Z}^3\times\mathbb{R})}\lesssim \lambda^2(N^{2p_0-6}\lambda^{-p_0})^{(r-1)/r}.
\end{split}
\end{equation}
\end{lemma}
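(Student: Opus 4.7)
\medskip
\noindent\textbf{Proof proposal for Lemma \ref{nodecomp}.}
The plan is to carry out a Hardy--Littlewood circle method decomposition in the time variable, exploiting the Euclidean dispersive factor in $x_1$ to handle the $\xi_1$-direction cleanly and reducing the rest to the classical analysis of Gauss sums. Writing
\begin{equation*}
K_N(x,t)=\eta^1(2^5t/(2\pi))\,I_N(x_1,t)\prod_{j=2}^4 S_N(x_j,t),
\end{equation*}
with $I_N$ the Euclidean integral and $S_N$ the $2\pi$-periodic sum from \eqref{no1}, we first record the dispersive bound $|I_N(x_1,t)|\lesssim\min(N,|t|^{-1/2})$ (by completing the square) and the circle-method structure of the $S_N$: they concentrate near rationals $2\pi a/q$ with small denominator $q$.

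\medskip
\noindent\emph{Step 1 (Minor arcs).} Choose a Farey cutoff $Q=Q(N,\lambda)$ with $Q^{3/2}\sim N^4\lambda^{-2}$. By Dirichlet's theorem every $t$ admits a best approximation $|t-2\pi a/q|\leq 2\pi(qN)^{-1}$ with $1\leq q\leq N$. On the minor arcs where $q>Q$, Weyl's inequality for quadratic Gauss sums yields $|S_N(x_j,t)|\lesssim N(Q^{-1/2}+N^{-1/2})$, so that
\begin{equation*}
|K_N(x,t)|\lesssim N\cdot\bigl(N/Q^{1/2}\bigr)^3\lesssim \lambda^2/4.
\end{equation*}
Place this contribution, together with the smooth $O(N^{-10})$ errors generated in Step~2, into $K_N^{1,\lambda}$; its uniform bound then follows from our choice of $Q$.

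\medskip
\noindent\emph{Step 2 (Major arc model).} On the major arcs write $t=2\pi a/q+s$ with $q\leq Q$, $(a,q)=1$, $|s|\leq 2\pi(QN)^{-1}$, and use the standard splitting $\xi=q\ell+b$, $b\in Z_q$:
\begin{equation*}
S_N(x_j,2\pi a/q+s)=q^{-1}\sum_{b\in Z_q}G(a,b;q)\,\widetilde J_N(x_j-2\pi b/q,\,s)+O_{\text{smooth}}(N^{-10}),
\end{equation*}
where $G(a,b;q)=\sum_{\ell\in Z_q}e^{2\pi i(a\ell^2-b\ell)/q}$ is a quadratic Gauss sum and $\widetilde J_N$ is a smooth Euclidean-like Schr\"odinger kernel at frequency $\sim N$. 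Inserting this into the definition of $K_N$ and inserting a smooth localizer $\eta(QNs)$ produces an explicit model kernel $K_N^{\text{mod}}$; the discrepancy with the true major-arc contribution is pointwise $\lesssim\lambda^2/4$ and is absorbed into $K_N^{1,\lambda}$.

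\medskip
\noindent\emph{Step 3 (Fourier expansion and divisor dichotomy).} Taking the spacetime Fourier transform of $K_N^{\text{mod}}$ converts the cutoffs $\eta(QN(t-2\pi a/q))$ summed over Farey fractions into a Fourier series, precisely of the form analyzed in Lemma \ref{decomposition}. The resulting coefficients $c_m$ satisfy $|c_m|\leq C_\gamma\,d(m,Q)Q^{1+\gamma}$; the spatial Gauss sums combine with the Euclidean kernels $\widetilde J_N$ and the factor $I_N(x_1,t)$ into an approximately bounded multiplier in $\xi=(\xi_1,\xi_2,\xi_3,\xi_4)$ (the $\xi_1$-direction being controlled by the dispersive Gaussian coming from $I_N$). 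Now fix a divisor threshold $D=D(\lambda,N)$ and split $K_N^{\text{mod}}=K_N^{2,\lambda}+K_N^{3,\lambda}$ by whether $d(m,Q)\leq D$ or $d(m,Q)>D$. The first piece obeys a uniform bound $\|\widehat{K_N^{2,\lambda}}\|_\infty\lesssim DQ^{1+\gamma}$ by direct substitution; for the second piece, the estimate \eqref{divisors} on the rarity of integers with many divisors, combined with $\ell^r$ summation over the frequencies $m$, yields the $L^r$-bound in \eqref{no40}. Choosing $Q$ and $D$ as explicit powers of $\lambda/N^2$, and $\gamma$ small enough to be cosmetic, matches both target exponents $\lambda^2\bigl(N^{2p_0-6}\lambda^{-p_0}\bigr)$ and $\lambda^2\bigl(N^{2p_0-6}\lambda^{-p_0}\bigr)^{(r-1)/r}$ simultaneously.

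\medskip
\noindent The main obstacle is Step~2: making the Gauss-sum approximation uniformly accurate on the support of $K_N$, so that the error really is pointwise small enough to join $K_N^{1,\lambda}$, and then balancing the two free parameters $Q$ and $D$ against the target exponents in \eqref{no40}. The restriction $p_0>18/5$ and the lower bound $\lambda\geq N^{(2p_0-6)/(p_0-2)}$ enter exactly at this balancing step, ensuring that the chosen $Q$ dominates the trivial denominator $1$ while the chosen $D$ stays below the Weyl barrier $Q^{1+\gamma}$.
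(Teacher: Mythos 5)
Your overall strategy — circle-method partition in time, Fourier expansion of the major-arc cutoffs via Lemma \ref{decomposition}, and a divisor dichotomy using \eqref{divisors} — is the right one and follows the paper's spirit, but there are two structural gaps that would cause the argument to fail when $\lambda$ is near the lower end of its range.

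First, the zeroth Fourier mode of the major-arc bump $\sum_{q\le Q}\sum_{(a,q)=1}\eta(QN(t-2\pi a/q))$ is not small: by \eqref{bo2} its coefficient is $\sim(QN)^{-1}\sum_{q\le Q}\varphi(q)\sim Q/N$. Inserted into \eqref{no71}, this yields a contribution of size $\sim Q/N$ to $\|\widehat{K_N^{2,\lambda}}\|_\infty$. With your choice $Q\sim(N^4/\lambda^2)^{2/3}$ this is $N^{5/3}\lambda^{-4/3}$, which for $p_0$ close to $18/5$ and $\lambda$ near $N^{(2p_0-6)/(p_0-2)}$ is vastly larger than the target $\lambda^{2-p_0}N^{2p_0-6}\lesssim 1$. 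The paper circumvents exactly this: in the regime $L\le 2K-\delta K$ it introduces the renormalization $p_{k,j}-\rho_{k,j}p_{k,0}$ in \eqref{no74}--\eqref{no74.5}, which zeroes the $m=0$ Fourier coefficient (so $b_0=0$ in the display after \eqref{no82.5}); the subtracted piece $\rho_{k,j}p_{k,0}$ is absorbed into $K_N^{1,\lambda}$ and estimated pointwise in physical space. Your proposal has no analogue of this cancellation. Relatedly, the divisor split only has enough room when $L\ge 2K-\delta K$ (equivalently $\lambda$ close to $N^2$); that is why the paper sets $K_N^{3,\lambda}=0$ in Case~1 and invokes \eqref{divisors} only in Case~2, whereas you apply the dichotomy globally. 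Second, your Step~2 replaces each $S_N$ by a Gauss-sum$\times$Euclidean-kernel model and asserts the discrepancy is pointwise $\lesssim\lambda^2/4$; this is a nontrivial quantitative claim that is neither elementary nor needed — the paper works directly with the true kernel and the upper bound \eqref{no4}, multiplied by the time partition \eqref{no5}, avoiding any explicit model. Restructuring along the paper's two-case dichotomy, and adding the $p_{k,0}$-renormalization in the small-$\lambda$ case, is what is needed to make the proof close.
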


\begin{proof}[Proof of Lemma \ref{nodecomp}] In view of the definition \eqref{no1}, for any continuous function $h:\mathbb{R}\to\mathbb{C}$ and any $(\xi,\tau)\in\R\times\mathbb{Z}^3\times\mathbb{R}$
\begin{equation}\label{no71}
\begin{split}
\mathcal{F}[K_N(x,t)\cdot h(t)](\xi,\tau)&=\int_{\R\times\mathbb{T}^3\times\mathbb{R}}e^{-ix\cdot\xi} e^{-it\tau}K_N(x,t)\cdot h(t)\,dxdt\\
&=C\eta^4(\xi/N)\int_{\mathbb{R}}h(t)\eta^1(2^5t/(2\pi))e^{-it(\tau+|\xi|^2)}\,dt.
\end{split}
\end{equation}

It is shown in \cite[Lemma 3.18]{Bo2} that
\begin{equation}\label{no2}
\Big|\sum_{n\in\mathbb{Z}}e^{-it|n|^2}e^{ixn}\eta^1(\xi/N)^2\Big|\lesssim \frac{N}{\sqrt{q}(1+N|t/(2\pi)-a/q|^{1/2})}
\end{equation}
if
\begin{equation}\label{no3}
t/(2\pi)=a/q+\beta,\qquad q\in\{1,\ldots,N\},\,\,a\in\mathbb{Z},\,\,(a,q)=1,\,\,|\beta|\leq (Nq)^{-1}.
\end{equation}
Therefore, for $t$ as in \eqref{no3},
\begin{equation}\label{no4}
|K_{N}(x,t)|\lesssim \frac{N^3}{q^{3/2}(1+N|t/(2\pi)-a/q|^{1/2})^3}\cdot\frac{N}{1+N|t/(2\pi)|^{1/2}}.
\end{equation}

For $j\in\mathbb{Z}$ we define $\eta_j,\eta\geq 0:\mathbb{R}\to[0,1]$,
\begin{equation*}
\eta_j(s):=\eta^1(2^js)-\eta^1(2^{j+1}s),\qquad \eta_{\geq j}(s):=\eta^1(2^js)=\sum_{k\geq j}\eta_k(s).
\end{equation*}
Clearly $\eta_j$ is supported in the set $\{s\in\mathbb{R}:|s|\in[2^{-j-1},2^{-j+1}]$ and $\eta_{\geq j}$ is supported in the interval $[-2^{-j+1},2^{-j+1}]$. Fix integers $K,L$,
\begin{equation}\label{no70}
\begin{split}
&K\in\mathbb{Z}_+,\qquad N\in [2^{K+4},2^{K+5}),\\
&L\in\mathbb{Z}\cap[0,2K+20],\qquad \lambda^{p_0-2}N^{6-2p_0}\in [2^{L},2^{L+1}).
\end{split}
\end{equation}

We start with the resolution
\begin{equation}\label{no5}
\begin{split}
&1=\big[\sum_{k=0}^{K-1}\sum_{j=0}^{K-k}p_{k,j}(s)\big]+e(s),\\
&p_{k,j}(s):=\sum_{q=2^{k}}^{2^{k+1}-1}\sum_{a\in\mathbb{Z},\,(a,q)=1}\eta_{j+K+k+10}(s/(2\pi)-a/q)\qquad\text{ if }j\leq K-k-1,\\
&p_{k,K-k}(s):=\sum_{q=2^{k}}^{2^{k+1}-1}\sum_{a\in\mathbb{Z},\,(a,q)=1}\eta_{\geq 2K+10}(s/(2\pi)-a/q).
\end{split}
\end{equation}
Let $T_K=\{(k,j)\in\{0,\ldots,K-1\}\times\{0,\ldots,K\}:k+j\leq K\}$. In view of Dirichlet's lemma, we observe that
\begin{equation}\label{no5.1}
\text{ if }t\in\mathrm{supp}(e)\text{ satisfies \eqref{no3} then either }N\lesssim q\text{ or }(Nq)^{-1}\approx |t/(2\pi)-a/q|.
\end{equation}

We define the first component of the kernel $K_N^{2,\lambda}$,
\begin{equation}\label{no72}
K_{N,1}^{2,\lambda}(x,t):=K_N(x,t)\cdot \eta^1(2^{L-40}t/(2\pi)).
\end{equation}
It follows from \eqref{no71} that
\begin{equation}\label{no72.5}
\|\widehat{K_{N,1}^{2,\lambda}}\|_{L^\infty(\R\times\mathbb{Z}^3\times\mathbb{R})}\lesssim 2^{-L}\lesssim N^{2p_0-6}\lambda^{2-p_0},
\end{equation}
which agrees with \eqref{no40}.

Therefore we may assume that $L\geq 45$ and write
\begin{equation}\label{no72.8}
K_N(x,t)-K_{N,1}^{2,\lambda}(x,t)=\sum_{l=4}^{L-41}K_N(x,t)\cdot\eta_l(t/(2\pi)).
\end{equation}
Using \eqref{no4} and \eqref{no5.1}, for any $(k,j)\in T_K$ and $l\in[4,L-41]\cap \mathbb{Z}$,
\begin{equation}\label{no73}
\begin{split}
&\sup_{x,t}|K_N(x,t)\cdot\eta_l(t/(2\pi))p_{k,j}(t)|\lesssim 2^{l/2}2^{(3K+3j)/2},\\
&\sup_{x,t}|K_N(x,t)\cdot\eta_l(t/(2\pi))e(t)|\lesssim 2^{l/2}2^{3K/2}.
\end{split}
\end{equation}
Notice that $2^{l/2}2^{3K/2}\lesssim \lambda^2$ (therefore the error term can be estimated in $L^\infty$ in the physical space, which is the key restriction to make the proof work) provided that
\begin{equation*}
p_0\geq 18/5.
\end{equation*}
This is the main reason for the choice in \eqref{no0}.

We analyze two cases.

{\bf{Case 1:}} $L\leq 2K-\delta K$, $\delta=1/100$. In this case we set
\begin{equation}\label{no74}
\begin{split}
&K_N^{1,\lambda}(x,t):=K_N(x,t)\cdot\Big[\sum_{l=4}^{L-41}\eta_l(t/(2\pi))\big[e(t)+\sum_{k,j\in T_K,\,2j\leq L}p_{k,j}(t)+\sum_{k,j\in T_K,\,2j> L}\rho_{k,j}p_{k,0}(t)\big]\Big],\\
&K_N^{2,\lambda}(x,t):=K_{N,1}^{2,\lambda}(x,t)+K_N(x,t)\cdot\Big[\sum_{l=4}^{L-41}\eta_l(t/(2\pi))\big[\sum_{k,j\in T_K,\,2j>L}(p_{k,j}(t)-\rho_{k,j}p_{k,0}(t))\big]\Big],\\
&K_{N}^{3,\lambda}(x,t):=0,
\end{split}
\end{equation}
where $K_{N,1}^{2,\lambda}$ is defined as in \eqref{no72} and
\begin{equation}\label{no74.5}
\rho_{k,j}:=2^{-j}\text{ if }j\leq K-k-1\text{ and }\rho_{k,K-k}:=2^{-K+k+1}.
\end{equation}
The coefficients $\rho_{k,j}$ are chosen to achieve a cancellation (see \eqref{no81} and \eqref{no82} below). In view of \eqref{no5} and \eqref{no72.8}, $K_N=K_N^{1,\lambda}+K_N^{2,\lambda}+K_N^{3,\lambda}$. It remains to prove the bounds in \eqref{no40}.

The bound on $K_{N}^{3,\lambda}$ is trivial. Using \eqref{no73}, for any $(x,t)\in\mathbb{T}^3\times\mathbb{R}\times\mathbb{R}$
\begin{equation*}
\begin{split}
|K_N^{1,\lambda}(x,t)|&\leq \sum_{l=4}^{L-41}|K_N(x,t)\cdot\eta_l(t/(2\pi))e(t)|\\
&+\sum_{l=4}^{L-41}\sum_{k,j\in T_K,\,2j\leq L}|K_N(x,t)\cdot\eta_l(t/(2\pi))p_{k,j}(t)|\\
&+\sum_{l=4}^{L-41}\sum_{k,j\in T_K,\,2j>L}|K_N(x,t)\cdot\eta_l(t/(2\pi))\rho_{k,j}p_{k,0}(t)|\\
&\lesssim 2^{L/2}2^{3K/2}+K2^{L/2}2^{3K/2+3L/4}+K2^{L/2}2^{3K/2}\\
&\lesssim K2^{(L-2K)(5p_0-18)/(4p_0-8)}\cdot 2^{2L/(p_0-2)}2^{K(4p_0-12)/(p_0-2)}.
\end{split}
\end{equation*}
Recall that $(2K-L)\geq \delta K$, $N\gg 1$, and $p_0>18/5$. Since $\lambda^2\approx 2^{2L/(p_0-2)}2^{K(4p_0-12)/(p_0-2)}$ (see \eqref{no70})  the desired bound $|K_N^{1,\lambda}(x,t)|\leq \lambda^2/2$ follows.

To prove the bound \eqref{no40} on the kernel $K_N^{2,\lambda}$ we use \eqref{no72.5} and the identity \eqref{no71}. Let $\widetilde{\eta}_L(s):=\sum_{l=4}^{L-41}\eta_l(s)=\eta^1(2^4s)-\eta^1(2^{L-40}s)$. It remains to prove that for any $b\in\mathbb{R}$
\begin{equation}\label{no80}
\sum_{k,j\in T_K,\,2j>L}\Big|\int_{\mathbb{R}}\widetilde{\eta}_L(t)(p_{k,j}(2\pi t)-\rho_{k,j}p_{k,0}(2\pi t))\eta^1(2^5t)e^{-itb}\,dt\Big|\lesssim 2^{-L}.
\end{equation}
Using Lemma \ref{decomposition} and the definition \eqref{no5}, we write, for $k,j\in T_K$,
\begin{equation}\label{no81}
\begin{split}
&p_{k,j}(2\pi t)-\rho_{k,j}p_{k,0}(2\pi t)\\
&=\sum_{m\in\mathbb{Z}}c_me^{2\pi imt}\big[2^{-j-K-k-10}\widehat{\eta_0}(2\pi m/2^{j+K+k+10})-\rho_{k,j}2^{-K-k-10}\widehat{\eta_0}(2\pi m/2^{K+k+10})\big].
\end{split}
\end{equation}
if $j\leq K-k-1$, and
\begin{equation}\label{no82}
\begin{split}
&p_{k,K-k}(2\pi t)-\rho_{k,K-k}p_{k,0}(2\pi t)\\
&=\sum_{m\in\mathbb{Z}}c_me^{2\pi imt}\big[2^{-2K-10}\widehat{\eta^1}(2\pi m/2^{2K+10})-\rho_{k,K-k}2^{-K-k-10}\widehat{\eta_0}(2\pi m/2^{K+k+10})\big],
\end{split}
\end{equation}
where
\begin{equation}\label{no82.5}
c_m=\sum_{q=2^k}^{2^{k+1}-1}\sum_{a\in Z_q,\,(a,q)=1}e^{-2\pi im\cdot a/q}.
\end{equation}
In view of the definition \eqref{no74.5} and the inequality \eqref{bo3}, for any $(k,j)\in T_K$
\begin{equation*}
\begin{split}
&p_{k,j}(2\pi t)-\rho_{k,j}p_{k,0}(2\pi t)=\sum_{m\in\mathbb{Z}}b_me^{2\pi imt},\\
&b_0=0,\qquad|b_m|\leq C_\gamma d(m,2^{k+1})2^{k(1+\gamma)}2^{-j-k-K}(1+|m|/2^{K+k})^{-10},\qquad\gamma>0.
\end{split}
\end{equation*}
Since $d(m,Q)\leq C_\gamma |m|^\gamma$, $\gamma>0$, for any $(m,Q)\in\mathbb{Z}^\ast\times\mathbb{Z}_+^\ast$, it follows that
\begin{equation*}
|b_m|\leq C_\gamma 2^{\gamma(j+K+k)}2^{-j-K}\qquad\text{ for any }m\in\mathbb{Z}.
\end{equation*}
Thus
\begin{equation*}
\begin{split}
\Big|\int_{\mathbb{R}}\widetilde{\eta}_L(t)(p_{k,j}(2\pi t)-\rho_{k,j}p_{k,0}(2\pi t))\eta^1(2^5t)e^{-itb}\,dt\Big|&\leq\sum_{m\in\mathbb{Z}}|b_m|\Big|\int_{\mathbb{R}}\widetilde{\eta}_L(t)\eta^1(2^5t)e^{-it(b-2\pi m)}\,dt\Big|\\
&\leq C_\gamma 2^{\gamma(j+K+k)}2^{-j-K}.
\end{split}
\end{equation*}
The desired bound \eqref{no80} follows if $\gamma$ is chosen sufficiently small, for example $\gamma=\delta/100$.
\medskip

{\bf{Case 2:}} $L\geq 2K-\delta K$, $\delta=1/100$. For $b\in\mathbb{Z}_+$ sufficiently large, we set
\begin{equation}\label{no90}
K_N^{1,\lambda}(x,t):=K_N(x,t)\cdot\Big[\sum_{l=4}^{L-41}\eta_l(t/(2\pi))\big[e(t)+\sum_{k,j\in T_K,\,2j\leq L-b}p_{k,j}(t)\big]\Big].
\end{equation}
Using the bounds \eqref{no73}, for any $(x,t)\in\R\times\mathbb{T}^3\times\mathbb{R}$
\begin{equation*}
\begin{split}
&|K_N^{1,\lambda}(x,t)|\\
&\lesssim \sum_{l=4}^{L-41}|K_N(x,t)\cdot\eta_l(t/(2\pi))e(t)|+\sum_{l=4}^{L-41}\sup_{k,j\in T_K,\,2j\leq L-b}|K_N(x,t)\cdot\eta_l(t/(2\pi))p_{k,j}(t)|\\
&\lesssim 2^{L/2}2^{3K/2}+2^{L/2}2^{3K/2+3L/4}2^{-3b/4}\\
&\lesssim 2^{-3b/4}2^{(L-2K)(5p_0-18)/(4p_0-8)}\cdot 2^{2L/(p_0-2)}2^{K(4p_0-12)/(p_0-2)}.
\end{split}
\end{equation*}
Since $\lambda^2\approx 2^{2L/(p_0-2)}2^{K(4p_0-12)/(p_0-2)}$, it follows that $|K_N^{1,\lambda}(x,t)|\leq\lambda^2/2$ provided that $b$ is fixed sufficiently large.

Let (see \eqref{no5} and \eqref{no72.8})
\begin{equation*}
L_N(x,t):=K_N(x,t)-K_{N,1}^{2,\lambda}(x,t)-K_N^{1,\lambda}(x,t)=\sum_{l=4}^{L-41}\sum_{k,j\in T_K,\,2j> L-b}K_N(x,t)\cdot\eta_l(t)p_{k,j}(t).
\end{equation*}
For \eqref{no40} it remains to prove that one can decompose
\begin{equation}\label{no91}
\begin{split}
&L_N=L_N^{2,\lambda}+L_N^{3,\lambda},\\
&\|\widehat{L_N^{2,\lambda}}\|_{L^\infty(\R\times\mathbb{Z}^3\times\mathbb{R})}\lesssim 2^{-L},\qquad \|\widehat{L_N^{3,\lambda}}\|_{L^r(\R\times\mathbb{Z}^3\times\mathbb{R})}\lesssim \lambda^{2/r}2^{-L(r-1)/r}.
\end{split}
\end{equation}

As before, let $\widetilde{\eta}_L(s):=\sum_{l=4}^{L-41}\eta_l(s)=\eta^1(2^4s)-\eta^1(2^{L-40}s)$. In view of the formula \eqref{no71}
\begin{equation}\label{no92}
\widehat{L_N}(\xi,\tau)=C\sum_{k,j\in T_K,\,2j> L-b}\eta^4(\xi/N)\int_{\mathbb{R}}\widetilde{\eta}_L(t)\eta^1(2^5t)p_{k,j}(2\pi t)e^{-2\pi it(\tau+|\xi|^2)}\,dt.
\end{equation}
The cardinality of the set $T_{K,L}:=\{k,j\in T_K:2j> L-b\}$ is bounded by $C(1+|2K-L|)^2$. Letting $f_{k,j}:\mathbb{R}\to\mathbb{C}$,
\begin{equation}\label{no93}
f_{k,j}(\mu):=\int_{\mathbb{R}}\widetilde{\eta}_L(t)\eta^1(2^5t)p_{k,j}(2\pi t)e^{-2\pi it\mu}\,dt,
\end{equation}
it suffices to prove that for any $(k,j)\in T_{K,L}$ one can decompose
\begin{equation}\label{no94}
\begin{split}
&f_{k,j}=f_{k,j}^2+f_{k,j}^3,\\
&\|f_{k,j}^2\|_{L^\infty(\mathbb{R})}\lesssim 2^{-L}(1+|2K-L|)^{-2},\\
&\|f_{k,j}^3\|_{L^r(\mathbb{R})}\lesssim (\lambda/N^2)^{2/r}2^{-L(r-1)/r}(1+|2K-L|)^{-2}.
\end{split}
\end{equation}

As in \eqref{no80} and \eqref{no81}, using Lemma \ref{decomposition}, we write
\begin{equation*}
p_{k,j}(2\pi t)=\sum_{m\in\mathbb{Z}}c_me^{2\pi imt}2^{-j-K-k-10}\chi(2\pi m/2^{j+K+k+10}),
\end{equation*}
where $c_m$ are as in \eqref{no82.5} and $\chi\in\{\widehat{\eta_0},\widehat{\eta^1}\}$. Letting $g_L:\mathbb{R}\to\mathbb{C}$,
\begin{equation}\label{no95}
g_L(\nu):=\int_{\mathbb{R}}\widetilde{\eta}_L(t)\eta^1(2^5t)e^{-2\pi it\nu}\,dt,
\end{equation}
it follows from \eqref{no93} that
\begin{equation}\label{no96}
f_{k,j}(\mu)=2^{-j-K-k-10}\sum_{m\in\mathbb{Z}}g_L(\mu-m)\cdot c_m\chi(2\pi m/2^{j+K+k+10}).
\end{equation}

We define
\begin{equation}\label{no97}
\begin{split}
&U_{N,k,j}=\{m\in\mathbb{Z}:|m|\leq 2^{j+K+2k}\text{ and }d(m,2^{k+1})\geq 2^{(2K-L)/4}\},\\
&f_{k,j}^2(\mu)=2^{-j-K-k-10}\sum_{m\notin U_{N,k,j}}g_L(\mu-m)\cdot c_m\chi(2\pi m/2^{j+K+k+10}),\\
&f_{k,j}^3(\mu)=2^{-j-K-k-10}\sum_{m\in U_{N,k,j}}g_L(\mu-m)\cdot c_m\chi(2\pi m/2^{j+K+k+10}).
\end{split}
\end{equation}
Clearly $f_{k,j}=f_{k,j}^2+f_{k,j}^3$. Using the inequalities in \eqref{bo2}
\begin{equation*}
\text{ if }m\notin U_{N,k,j}\text{ then }|c_m\chi(2\pi m/2^{j+K+k+10})|\lesssim 2^{(2K-L)/4}2^{11k/10}.
\end{equation*}
In view of the definition \eqref{no95},
\begin{equation}\label{no98}
\int_\mathbb{R} \sup_{|y|\leq 1}|g_L(\nu+y)|\,d\nu\lesssim 1.
\end{equation}
Therefore, if $(k,j)\in T_{K,L}$ (in particular $j\geq L/2-b/2$ and $k\leq K-L/2+b/2$)
\begin{equation*}
\|f_{k,j}^2\|_{L^\infty(\mathbb{R})}\lesssim 2^{-j-K}2^{(2K-L)3/8}\lesssim 2^{-L/2-K}2^{(2K-L)3/8},
\end{equation*}
which suffices to prove the first inequality in \eqref{no94}.

Since $|c_m|\lesssim 2^{2k}$,
\begin{equation*}
|f_{k,j}^3(\mu)|\lesssim 2^{-j-K+k}\sum_{m\in\mathbb{Z}}|g_L(\mu-m)|\mathbf{1}_{U_{N,k,j}}(m).
\end{equation*}
Using \eqref{no98} it follows that
\begin{equation*}
\|f_{k,j}^3\|_{L^r(\mathbb{R})}\lesssim 2^{-j-K+k}|U_{N,k,j}|^{1/r}.
\end{equation*}
Therefore, the last inequality in \eqref{no94} is a consequence of the bound
\begin{equation}\label{no99}
|U_{N,k,j}|\lesssim 2^{2K}2^{-10(2K-L)}.
\end{equation}
On the other hand, using the bound \eqref{divisors} with
\begin{equation*}
P=C2^{2K+(2K-L)},\quad Q=2^{k+1},\quad D=2^{(2K-L)/4},\quad\gamma=1,\quad B=100,
\end{equation*}
it follows that
\begin{equation*}
|U_{N,k,j}|\lesssim 2^{2K}2^{-20(2K-L)}+2^{50(2K-L)}.
\end{equation*}
The bound \eqref{no99} follows since $2K-L\leq \delta K$, $\delta=1/100$.
\end{proof}


\begin{thebibliography}{99}

\bibitem{Bo2} J. Bourgain, Fourier transform restriction phenomena for certain lattice subsets and applications to nonlinear evolution equations. I. Schr\"{o}dinger equations, Geom. Funct. Anal. {\bf{3}} (1993), 107--156.

\bibitem{Bo1} J. Bourgain, Exponential sums and nonlinear Schr\"{o}dinger equations, Geom. Funct. Anal. {\bf{3}}  (1993), 157--178.

\bibitem{B} J. Bourgain, Global wellposedness of defocusing critical nonlinear Schr\"{o}dinger equation in the radial case, J. Amer. Math. Soc. {\bf {12}} (1999), 145--171.

\bibitem{BuGeTz} N. Burq, P. G\'{e}rard, and N. Tzvetkov, Strichartz inequalities and the nonlinear Schr\"{o}dinger equation on compact manifolds, Amer. J. Math. {\bf{126}} (2004), 569--605.

\bibitem{BuGeTz2} N. Burq, P. G\'{e}rard, and N. Tzvetkov, Bilinear eigenfunction estimates and the nonlinear Schr\"{o}dinger equation on surfaces, Invent. Math. {\bf{159}} (2005), 187--223.

\bibitem{Cazenave:book} T. Cazenave, Semilinear Schr\"odinger Equations, Courant Lecture Notes in Mathematics 10, New York University, Courant Institute of Mathematical Sciences, New  York; American Mathematical Society, Providence, RI, 2003.

\bibitem{CKSTTcrit} J. Colliander,  M. Keel, G.  Staffilani, H. Takaoka and  T. Tao, Global well-posedness and scattering for the energy-critical nonlinear Schr\"{o}dinger equation in $\R^3$, Ann. of Math. {\bf{167}} (2008), 767--865.

\bibitem{GePi} P. G\'erard and V. Pierfelice, Nonlinear Schr\"odinger equation on four-dimensional compact manifolds, Bull. Soc. Math. France {\bf 138} (2010), no 1, 119-151.

\bibitem{Gr1} M. Grillakis, Regularity and asymptotic behaviour of the wave equation with a critical nonlinearity, Ann. of Math. {\bf{132}} (1990), 485--509.

\bibitem{G} M. Grillakis,  On nonlinear Schr\"{o}dinger equations, Comm. Partial Differential Equations {\bf {25}} (2000), 1827--1844.

\bibitem{He} S. Herr, The quintic nonlinear Schrödinger equation on three-dimensional Zoll manifolds, Preprint (2011).

\bibitem{HeTaTz} S. Herr, D. Tataru, and N. Tzvetkov, Global well-posedness of the energy critical nonlinear Schr\"{o}dinger equation with small initial data in $H^1(T^3)$, Preprint (2010).

\bibitem{HeTaTz2} S. Herr, D. Tataru, and N. Tzvetkov, Strichartz estimates for partially periodic solutions to Schr\"{o}dinger equations in $4d$ and applications, Preprint (2010).

\bibitem{IoPa} A. D. Ionescu and B. Pausader, The energy-critical defocusing NLS on $\T^3$, Preprint (2011).

\bibitem{IoPaSt} A. D. Ionescu, B. Pausader, and G. Staffilani, On the global well-posedness of energy-critical Schr\"{o}dinger equations in curved spaces, Preprint (2010).

\bibitem{KeMe} C. E. Kenig and F. Merle, Global well-posedness, scattering and blow-up for the energy-critical, focusing, non-linear Schr\"{o}dinger equation in the radial case, Invent. Math. {\bf{166}} (2006), 645--675.

\bibitem{Ker} S. Keraani, On the defect of compactness for the Strichartz estimates of the Schr\"{o}dinger equations, J. Differential Equations {\bf{175}}  (2001), 353--392.

\bibitem{Oz} T. Ozawa, Long range scattering for nonlinear Schrdinger equations in one space dimension, Comm. Math. Phys. {\bf{139}} (1991), 479--493.

\bibitem{RV} E. Ryckman and M. Visan,  Global well-posedness and scattering for the defocusing energy-critical nonlinear Schr\"{o}dinger equation in $\R\sp {1+4}$, Amer. J. Math. {\bf {129}} (2007), 1--60.

\bibitem{ShSt1} J. Shatah and M. Struwe, Regularity results for nonlinear wave equations, Ann. of Math. {\bf{138}} (1993), 503–-518.

\bibitem{TaTz} H. Takaoka and N. Tzvetkov, On $2D$ Nonlinear Schr\"odinger equations with data on $\mathbb{R}\times\mathbb{T}$, J. Funct. Anal. {\bf 182} (2001), no2, 427-442.

\bibitem{Tao:book} T. Tao, Nonlinear Dispersive Equations. Local and Global Analysis, CBMS Regional Conference Series in Mathematics, {\bf{106}}, American Mathematical Society, Providence, RI, 2006.

\bibitem{TzVi} N. Tzvetkov, N. Visciglia, Small data scattering for the nonlinear Schrödinger equation on product spaces, Preprint (2010).

\bibitem{V} M.  Visan, The defocusing energy-critical nonlinear Schr\"{o}dinger equation in higher dimensions, Duke Math. J. {\bf {138}} (2007), no. 2, 281--374.



\end{thebibliography}
\end{document}